\newtheorem{theorem}{Theorem}
\newtheorem{lemma}[theorem]{Lemma}
\newtheorem{corollary}[theorem]{Corollary}
\newtheorem{remark}[theorem]{Remark}
\newtheorem{claim}[theorem]{Claim}
\theoremstyle{definition}
\newtheorem{example}[theorem]{Example}
\newtheorem*{definition}{Definition}
\newtheorem*{ack}{Acknowledgement}
\newcommand{\cA}{{\mathcal A}}
\newcommand{\cC}{{\mathcal C}}
\newcommand{\cD}{{\mathcal D}}
\newcommand{\cE}{{\mathcal E}}
\newcommand{\cF}{{\mathcal F}}
\newcommand{\cG}{{\mathcal G}}
\newcommand{\cH}{{\mathcal H}}
\newcommand{\cI}{{\mathcal I}}
\newcommand{\cJ}{{\mathcal J}}
\newcommand{\cK}{{\mathcal K}}
\newcommand{\cM}{{\mathcal M}}
\newcommand{\cS}{{\mathcal S}}
\newcommand{\fY}{{\mathfrak Y}}
\newcommand{\hI}{\cE}
\newcommand{\hH}{\cF}
\newcommand{\uzero}{{\underline{0}}}
\newcommand\Bin{\operatorname{Bin}}
\newcommand\E{{\mathbb E}}
\newcommand\Var{\operatorname{Var}} 
\renewcommand\Pr{{\mathbb P}}
\newcommand\NN{\mathbb{N}}
\newcommand{\indic}[1]{\mathbbm{1}_{\{{#1}\}}}
\newcommand{\indicb}[1]{\mathbbm{1}_{\bigl\{{#1}\bigr\}}}
\newcommand\ceil[1]{\lceil {#1} \rceil}
\newcommand\ceilL[1]{\left\lceil {#1} \right\rceil}
\newcommand\restr[2]{\ensuremath{\left.#1\right|_{#2}}}
\renewcommand{\epsilon}{\varepsilon}
\newcommand{\eps}{\varepsilon}
\long\def\symbolfootnote[#1]#2{\begingroup
\def\thefootnote{\fnsymbol{footnote}}\footnote[#1]{#2}\endgroup}
\let\OLDthebibliography\thebibliography
\renewcommand\thebibliography[1]{
  \OLDthebibliography{#1}
  \setlength{\parskip}{0pt}
  \setlength{\itemsep}{0pt plus 0.3ex}
}
\begin{document}
\title{On the missing log in upper tail estimates} 
\author{Lutz Warnke\thanks{School of Mathematics, Georgia Institute of Technology, 
Atlanta GA~30332, USA. E-mail: {\tt warnke@math.gatech.edu}.
Research partially supported by NSF Grant DMS-1703516 and a Sloan Research Fellowship. 
Part of the work was done while the author was a member of the Department of Pure Mathematics and Mathematical Statistics, University of Cambridge.}}
\date{29 February, 2016; revised January 11, 2019} 
\maketitle

\begin{abstract} 
In the late 1990s, Kim and Vu pioneered an inductive method for showing concentration of certain random variables~$X$.  
Shortly afterwards, Janson and Ruci{\'n}ski developed an alternative inductive approach, 
which often gives comparable results for the upper tail $\Pr(X \ge (1+\eps)\E X)$. 
In some cases, both methods yield upper tail estimates which are best possible up to a logarithmic factor in the exponent,  
but closing this narrow gap has remained a technical challenge. 
In this paper we present a BK-inequality based combinatorial sparsification idea that can recover this missing logarithmic term in the upper tail. 

As an illustration, we consider random subsets of the integers $\{1, \ldots, n\}$, and prove 
sharp upper tail estimates for various objects of interest in additive combinatorics. 
Examples include the number of arithmetic progressions, Schur triples, additive quadruples, and $(r,s)$-sums. 
\end{abstract}

\section{Introduction}\label{sec:intro}
Concentration inequalities are of great importance in discrete mathematics, theoretical computer science, and related fields. 
They intuitively quantify random fluctuations of a given random variable~$X$, by bounding the probability that~$X$ differs substantially from its expected value~$\mu = \E X$. 
In combinatorial applications, $X$~often counts certain objects (e.g., the number of subgraphs or arithmetic progressions), in which case the random variable~$X$ can usually be written as a low-degree polynomial of many independent random variables. 
In this context concentration inequalities with exponentially small estimates are vital 
(e.g., to make union bound arguments amenable), 
and here Kim and Vu~\cite{KimVu2000,Vu2000,Vu2002} achieved a breakthrough in the late~1990s. 
Their powerful concentration inequalities have since then, e.g., 
been successfully applied to many combinatorial problems, been included in standard textbooks, and 
earned Vu the George~P{\'o}lya~Prize in~2008.

In probabilistic combinatorics, the exponential rate of decay of the \emph{lower tail} $\Pr(X \le \mu -t)$ and \emph{upper tail} $\Pr(X \ge \mu + t)$ have received considerable attention, since they are of great importance in applications (of course, this is also an interesting problem in concentration of measure). 
The behaviour of the lower tail is nowadays well-understood due to the celebrated Janson- and Suen-inequalities~\cite{Janson,RW,JW,JSuen,DLP}. 
By contrast, the behaviour of the `infamous' upper tail has remained a well-known technical challenge (see also~\cite{UT,UTSG}). 
Here the inductive method of Kim and Vu~\cite{KimVu2000,Vu2002} from around~1998 often yields inequalities of the form 
\begin{equation}\label{eq:intro:UT}
\Pr(X \ge (1+\eps)\mu) \le \exp\bigl(- c(\eps) \mu^{1/q}\bigr) ,
\end{equation}
where $q \ge 1$ is some constant. In~2000, Janson and Ruci{\'n}ski~\cite{DL} developed an alternative inductive approach, 
which often gives comparable results for the upper tail, i.e., which recovers~\eqref{eq:intro:UT} up to the usually irrelevant numerical value of the parameter~$c$. 
Studying the sharpness of the tail inequality~\eqref{eq:intro:UT} is an important problem according to Vu (see Section~4.8 in~\cite{Vu2002}). 
In fact, one main aim of the paper~\cite{DL} was `to stimulate more research into these methods' since `neither of [them] seems yet to be fully developed'.  
In other words, Janson and Ruci{\'n}ski were asking for further improvements of the aforementioned fundamental proof techniques (the papers~\cite{DL,Vu2002} already contained several tweaking options for decreasing~$q$).

\enlargethispage{\baselineskip}

In this paper we address this technical challenge in cases where the inductive methods of Kim--Vu and Janson--Ruci{\'n}ski are nearly sharp. 
The crux is that, for several interesting classes of examples (naturally arising, e.g., in additive combinatorics), the upper tail inequality~\eqref{eq:intro:UT} is best possible up to a logarithmic factor in the exponent. 
Closing such narrow gaps has recently become an active area of research in combinatorial probability (see, e.g,~\cite{UT,UTSG,UTAP,K3TailCh,KkTailDK,AP}). 
The goal of this paper is to present a new idea that can add such missing logarithmic terms to the upper tail. 
From a conceptual perspective, this paper thus makes 
a new effect amenable to the rich toolbox of the Kim--Vu and Janson--Ruci{\'n}ski methods 
(we believe that our techniques will be useful elsewhere). 
For example, under certain 
somewhat natural 
technical assumptions,  
our methods allow us to improve the classical upper tail inequality~\eqref{eq:intro:UT} to estimates of the form
\begin{equation}\label{int:bd:2}
\Pr(X \ge (1+\eps)\mu) \le \exp\Bigl(-c(\eps) \min\bigl\{\mu, \; \mu^{1/q}s\bigr\} \Bigr) \quad \text{ with } \quad s \in \bigl\{\log n,\: \log(1/p)\bigr\} ,
\end{equation}
where the reader may wish to tentatively think of the parameters $n = \omega(1)$ and $p = o(1)$ as those in the binomial random graph~$G_{n,p}$ 
(here some extra assumptions are necessary, since there are examples where~\eqref{eq:intro:UT} is sharp, see Sections~\ref{sec:results:flavour} and~\ref{sec:applications:RISH}). 
This seemingly small improvement of~\eqref{eq:intro:UT} is conceptually important, since in several interesting 
applications the resulting inequality is best possible up to the value of~$c$. 
Indeed, as we shall see, sharp examples with 
$\Pr(X \ge (1+\eps)\mu) = \exp\bigl(-\Theta(\min\bigl\{\mu, \mu^{1/q}\log(1/p))\bigr)$ for $\eps=\Theta(1)$ 
naturally arise when~$X$ counts various objects of great interest in additive combinatorics, 
such as the number of arithmetic progressions (of given length) or additive quadruples in random subsets of the integers~$[n]=\{1, \ldots, n\}$.

In the remainder of this introduction we illustrate our methods with some applications, 
outline our high-level proof strategy, and discuss the structure of this paper. 
Noteworthily, our proof techniques do not solely rely on induction, but 
a blend of combinatorial and probabilistic arguments.

\subsection{Flavour of the results}\label{sec:results:flavour} 
We now illustrate the main flavour of our upper tail results with some concrete examples.  
Many important counting problems can be rephrased as the number of edges induced by the random induced subhypergraph $\cH_p=\cH[V_p(\cH)]$ (see, e.g.,~\cite{UT,RR1994,UTAP,AP,WG2012}), where $V_p(\cH)$ denotes the binomial random subset where each vertex $v \in V(\cH)$ is included independently with probability~$p$. 
Our methods yield the following upper tail inequality for~$\cH_p$, which extends one of the main results from~\cite{AP} for the special case $q=2$, and sharpens one of the principle results of Janson and Ruci{\'n}ski~\cite{UTAP} by a logarithmic factor in the~exponent. 
%
\begin{theorem}[Counting edges of random induced subhypergraphs]%
\label{thm:intro:indshg}
Let $1 \le q < k$ and $\gamma,D> 0$. 
Assume that $\cH$ is a $k$-uniform hypergraph with $v(\cH) \le N$ vertices and $e(\cH) \ge \gamma N^q$ edges.
Suppose that $\Delta_q(\cH) \le D$, where $\Delta_q(\cH)$ denotes the maximum number of edges of~$\cH$ that contain $q$ given vertices. 
Let $X = e(\cH_p)$ and $\mu = \E X$. 
Then for any~$\eps > 0$ there is~$c=c(\eps,k,\gamma,D)>0$ such that for all~$p \in (0,1]$ we have 
\begin{equation}\label{eq:intro:indshg}
\Pr(X \ge (1+\eps)\mu)  \le  
\exp\Bigl(-c \min\bigl\{\mu, \; \mu^{1/q} \log(e/p)\bigr\}\Bigr). 
\end{equation}
\end{theorem}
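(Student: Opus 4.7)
The plan is to combine a combinatorial sparsification of the induced subhypergraph $\cH_p$ with the BK inequality, thereby recovering the $\log(e/p)$ factor that is missing from classical upper-tail inequalities. First I would split on which term of $\min\{\mu,\mu^{1/q}\log(e/p)\}$ is smaller. When $\mu \le \mu^{1/q}\log(e/p)$, i.e.\ $\log(e/p) \gtrsim \mu^{1-1/q}$, the target reduces to $\exp(-c\mu)$, and this weaker bound is delivered by the classical Janson--Ruci{\'n}ski upper-tail inequality for $q$-bounded hypergraphs. So the interesting regime is $\log(e/p) < \mu^{1-1/q}$, where the goal is $\exp(-c\mu^{1/q}\log(e/p))$.

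In this regime the key new step would be a deterministic dichotomy for $\cH_p$: on the event $X \ge (1+\eps)\mu$, at least one of the following holds. \emph{Clustered case (a):} there is a seed set $T\subseteq V_p$ with $|T| = \Theta(\mu^{1/q})$ and $e(\cH[T]) \ge c_1\mu$, so the mere inclusion of $T$ in $V_p$ already forces a constant fraction of the excess. \emph{Spread case (b):} the realised hypergraph $\cH_p$ contains a matching of $\ge c_2\mu^{1/q}$ pairwise vertex-disjoint edges. I would prove this dichotomy by greedy peeling tuned to the degree threshold $\delta := \mu^{1-1/q}$: remove from the current hypergraph any vertex whose residual degree exceeds $\delta$, collecting the removed vertices in a set $T$. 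If $T$ ever reaches size $C\mu^{1/q}$ then, using an averaging argument with $\Delta_q(\cH)\le D$, one extracts from $T$ a subset having the dense-seed property of case (a); otherwise the residual hypergraph is low-degree and admits $\ge c_2\mu^{1/q}$ vertex-disjoint edges by a standard greedy matching bound, giving case (b).

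For the probability estimates, case (a) is bounded by a union bound over admissible seeds,
\[
\Pr(\text{case (a) occurs}) \;\le\; \sum_{T} p^{|T|},
\]
where the sum ranges over $T\subseteq V(\cH)$ satisfying the size and density conditions. Case (b) is handled by the BK inequality applied to the disjoint increasing events $\{E_i \subseteq V_p\}$, together with an enumeration of matchings in $\cH$; since each edge costs $p^k$ and $e(\cH)\le O(N^q)$, the resulting bound is comfortably smaller than the target and can be pushed below $\exp(-c\mu^{1/q}\log(1/p))$.

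The main obstacle will be the case (a) enumeration: a blind count $\binom{N}{|T|}$ is too lossy because $N/\mu^{1/q}$ can be of order $p^{-k/q}$, wiping out the $\log(1/p)$ gain entirely. This is precisely where the BK-based combinatorial sparsification advertised in the introduction must enter: one has to \emph{prune} the set of admissible seeds by exploiting the constraint that $T$ already hosts a near-maximal (in the sense of $\Delta_q\le D$) number of edges of $\cH$, so that such seeds are much rarer than arbitrary $|T|$-subsets of $V(\cH)$. Making this pruning precise, and making it compatible with BK so that one effectively pays only $p^{|T|}$ per seed rather than the full $\binom{N}{|T|}p^{|T|}$, will be the technical heart of the argument.
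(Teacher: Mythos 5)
Your proposed dichotomy breaks down at the probabilistic estimates, and the paper's actual argument is structured quite differently. The most serious issue is case~(b). A vertex-disjoint matching of $\Theta(\mu^{1/q})$ edges in $\cH_p$ is not a rare event: in fact, it is present with overwhelming probability already when $e(\cH_p)\approx\mu$, since a low-degree subhypergraph with $\mu$ edges and $\Delta_1 = O(\mu^{1-1/q})$ admits a greedy matching of exactly that size. The BK/enumeration bound you suggest comes out as $\binom{e(\cH)}{m}p^{km} \le (e\mu/m)^m$; plugging in $m = \Theta(\mu^{1/q})$ gives $(\Theta(\mu^{1-1/q}))^{\Theta(\mu^{1/q})}$, which is \emph{exponentially large}, not small. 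So case~(b) supplies no upper-tail information at all, and the assertion that its bound ``is comfortably smaller than the target'' is incorrect. Case~(a) has the difficulty you already flag — the naive seed enumeration loses a factor $\binom{N}{|T|}\approx(p^{-k/q})^{|T|}$ that wipes out the $p^{|T|}$ gain — and the ``pruning compatible with BK'' you invoke to repair it is never carried out; it is also not clear how to establish the claimed deterministic dichotomy, since high residual degree of the vertices in $T$ does not force $e(\cH[T])$ to be large.

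The paper avoids both problems by never reducing to a single seed or a single matching of edges. Instead it proceeds via a multi-stage degree reduction $\cH_p = \cJ_q\supseteq\cdots\supseteq\cJ_1$ that drives $\Delta_j$ down level by level while deleting at most $\eps\mu/2$ edges in total (Theorem~\ref{thm:sparse:iter}). The role your case~(b) was meant to play is instead played by the Chernoff-type inequality of Theorem~\ref{thm:C}: once $\Delta_1(\cJ_1)\le Q_1\approx\mu^{(q-1)/q}/\log(e/p)$, the bound $\Pr(w(\cJ_1)\ge(1+\eps/2)\mu \text{ for some such }\cJ_1\subseteq\cH_p)\le\exp(-\Omega(\varphi(\eps)\mu/Q_1))=\exp(-\Omega(\mu^{1/q}\log(e/p)))$ follows \emph{without any union bound over subhypergraphs}, because Theorem~\ref{thm:C} already controls the maximum over all subfamilies with bounded dependency. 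The BK inequality enters not on matchings of edges but on maximal vertex-disjoint collections of ``$r$-stars'' $(U,\cK_U)$ with $|\cK_U|=r$ and common centre $U$ of size $j$ (Lemma~\ref{lem:sparse:event}); the disjointly occurring events are $\{|\Gamma_U(\cH_p)|\ge r\}$, whose individual probabilities are small (bounded via Theorem~\ref{thm:C}), unlike the events ``edge $f$ is present'' in your scheme. To make your approach work you would need to replace both halves of the dichotomy with something of this kind.
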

%
This upper tail inequality is conceptually best possible in several ways.
First, the restriction to $q<k$ is necessary (see Section~\ref{sec:applications:RISH} for a counterexample when~$q=k$), 
Second, in several important applications~\eqref{eq:intro:indshg} is sharp (yields the correct exponential rate of decay), 
i.e., there is a matching lower bound of form 
\begin{equation}\label{eq:intro:indshg:both}
\Pr(X \ge (1+\eps)\mu)  \ge \indic{1 \le (1+\eps)\mu \le e(\cH)} \exp\Bigl(-C(\eps) \min\bigl\{\mu, \; \mu^{1/q} \log(e/p)\bigr\}\Bigr) ,
\end{equation}
where the restriction~$1 \le (1+\eps)\mu \le e(\cH)$ is natural.\footnote{Note that~$\Pr(X \ge (1+\eps)\mu)=0$ when~$(1+\eps)\mu > e(H)$, and that~$\Pr(X \ge (1+\eps)\mu)= 1 -\Pr(X=0)$ when~$(1+\eps)\mu < 1$.} 
In particular, letting the edges of the hypergraph~$\cH$ with vertex-set~$V(H)=[n]$ encode classical objects from additive combinatorics and Ramsey Theory,  
sharp examples of type~\eqref{eq:intro:indshg}--\eqref{eq:intro:indshg:both} include the number of $k$-term arithmetic progressions, Schur triples~$x+y=2z$, 
additive quadruples~$x_1+x_2=y_1+y_2$, 
and $(r,s)$-sums~$x_1 + \cdots + x_r = y_1 + \cdots + y_s$
in the binomial random subset~$[n]_p=V_p(\cH)$ of the integers; see Section~\ref{sec:intro:examples} and~\ref{sec:applications:RISH} for more details/concrete examples.

The two expressions in the exponent of the upper tail~\eqref{eq:intro:indshg}--\eqref{eq:intro:indshg:both} correspond to different phenomena.\footnote{A phenomenon not relevant for the qualitative accuracy of~\eqref{eq:intro:indshg}--\eqref{eq:intro:indshg:both} is that~$|V_p(\cH)|$ can also be somewhat `bigger' than~$\E |V_p(\cH)|$, which in some range yields sub-Gaussian type tail behaviour, see also~\cite{AP,star}.}  
Namely, in some range we expect that~$X=e(\cH_p)$ is approximately Poisson, in which case $\Pr(X \ge 2\mu)$ decays roughly like $\exp(-c \mu)$. 
Similarly, the $\exp(-c \mu^{1/q}\log(1/p)) = p^{c \mu^{1/q}}$ term intuitively corresponds to `clustered' behaviour (see also~\cite{AP,star,UTSG}), 
where few vertices $U \subseteq V_p(\cH)$ induce many edges in $\cH_p = \cH[V_{p}(\cH)]$: e.g., in each of the above-mentioned examples there always is such a set with~$|U|=c\mu^{1/q}$ and~$e(\cH[U]) \ge 2 \mu$, which readily implies~$\Pr(X \ge 2\mu) \ge \Pr(U \subseteq V_p(\cH)) = p^{c \mu^{1/q}}$. 
Note that classical tail inequalities of form~\eqref{eq:intro:UT} fail to handle these phenomena properly (lacking Poisson behaviour and the extra~$\log(1/p)$ term). 

\subsubsection{Upper tail examples from additive combinatorics and Ramsey theory}\label{sec:intro:examples}
In the following exemplary upper tail bounds~\eqref{eq:AP:tail}--\eqref{eq:rssum:tail} we tacitly allow the implicit constants to depend on~$\eps$. 
\begin{example}
\label{ex:AP}
\emph{Arithmetic progressions} (APs) are central objects in additive combinatorics. 
Given $k \ge 3$, let~$X=X_{n,k,p}$ denote the number of arithmetic progressions of length~$k$ in the binomial random subset~$[n]_p$ of the integers (to clarify: we count $k$-subsets~$\{x_1, \ldots, x_k\} \subseteq [n]_p$ forming APs); note that $\mu=\E X = \Theta(n^2p^k)$. 
Then, for any~$\eps>0$ and~$p=p(n) \in (0,1]$ satisfying $1 \le (1+\eps)\mu \le X_{n,k,1}$, we~have 
\begin{equation}\label{eq:AP:tail}
\Pr(X \ge (1+\eps)\mu) = \exp\Bigl(-\Theta\bigl(\min\bigl\{\mu, \; \mu^{1/2} \log(1/p)\bigr\}\bigr)\Bigr) .
\end{equation} 
\end{example}
\begin{example}
\label{ex:ST}
\emph{Schur triples}~$\{x,y,z\} \subseteq [n]$ with~$x+y=z$ (where $x \neq y$) are classical objects in Number theory and Ramsey theory (see, e.g.,~\cite{Gr} and \cite{GRR,Schacht2009}). 
Let~$X=X_{n,p}$ denote the number of Schur triples in~$[n]_p$; note that $\mu=\E X = \Theta(n^2p^3)$. 
Then, for any~$\eps>0$ and~$p=p(n) \in (0,1]$ satisfying $1 \le (1+\eps)\mu \le X_{n,1}$, we~have 
\begin{equation}\label{eq:ST:tail}
\Pr(X \ge (1+\eps)\mu) = \exp\Bigl(-\Theta\bigl(\min\bigl\{\mu, \; \mu^{1/2} \log(1/p)\bigr\}\bigr)\Bigr) .
\end{equation} 
The same tail bound also holds for~$\ell$-sums (studied, e.g., in~\cite{BHKLS}), where the $3$-element subsets satisfy~$x+y=\ell z$. 
\end{example}
\begin{example}
\label{ex:AQ}
\emph{Additive quadruples} are $4$-subsets~$\{x_1,x_2,y_1,y_2\} \subseteq [n]$ satisfying~$x_1+x_2=y_1+y_2$. 
The number of these quadruples is also called additive energy, which is an important quantity in additive combinatorics (see, e.g.,~\cite{BK2012,B2014}). 
Let~$X=X_{n,p}$ denote the number of additive quadruples in~$[n]_p$; note that $\mu=\E X = \Theta(n^3p^4)$. 
Then, for any~$\eps>0$ and~$p=p(n) \in (0,1]$ satisfying $1 \le (1+\eps)\mu \le X_{n,1}$, we~have 
\begin{equation}\label{eq:AQ:tail}
\Pr(X \ge (1+\eps)\mu) = \exp\Bigl(-\Theta\bigl(\min\bigl\{\mu, \; \mu^{1/3} \log(1/p)\bigr\}\bigr)\Bigr) .
\end{equation} 
\end{example}
\begin{example}
\label{ex:rssum}
\emph{$(r,s)$-sums} are~$(r+s)$-subsets ~$\{x_1,\ldots, x_r,y_1,\ldots, y_2\} \subseteq [n]$ satisfying~$x_1 + \cdots + x_r = y_1 + \cdots + y_s$. 
In the special case~$r=s$ the number of these sets is called $2r$-fold additive energy, which is useful in the context of Roth's theorem (see, e.g.,~\cite{B2014}). 
Given $r,s \ge 1$ satisfying $r+s \ge 3$, let~$X=X_{n,r,s,p}$ denote the number of $(r,s)$-sums in~$[n]_p$; note that $\mu=\E X = \Theta(n^{r+s-1}p^{r+s})$. 
Then, for any~$\eps>0$ and~$p=p(n) \in (0,1]$ satisfying $1 \le (1+\eps)\mu \le X_{n,r,s,1}$, we~have 
\begin{equation}\label{eq:rssum:tail}
\Pr(X \ge (1+\eps)\mu) = \exp\Bigl(-\Theta\bigl(\min\bigl\{\mu, \; \mu^{1/(r+s-1)} \log(1/p)\bigr\}\bigr)\Bigr) .
\end{equation} 
\end{example}
\noindent 
Similar tail bounds also hold for integer solutions of linear homogeneous systems, see Section~\ref{sec:applications:RISH} for the~details.

\subsubsection{Subgraph counts in random graphs: sub-Gaussian type upper tail bounds} 
As a side-product, our proof techniques also yield new results with a slightly different flavour. 
To illustrate this with subgraph counts in the binomial random graph~$G_{n,p}$, let $X=X_H$ denote the number of copies of~$H$ in~$G_{n,p}$. Set $\mu = \E X$.  
Here \emph{sub-Gaussian type} upper tail estimates\footnote{For subgraph counts lower tail estimates of sub-Gaussian type follow from Janson's inequality (see, e.g.,~\cite{JW}).} of the form 
\begin{equation}\label{intro:SGT}
\Pr(X \ge \mu+t) \le C\exp(-ct^2/\Var X) 
\end{equation}
have been extensively studied~\cite{SGN,Vu2000,DL,MS,Kannan,WG2011,WG2012} during the last decades, 
usually with emphasis on small deviations of form~$\sqrt{\Var X} \le t = o(\mu)$, say (differing from the large deviations regime~$t = \Theta(\mu)$ considered in the classical upper tail problem for subgraph counts).  
In particular, for so-called `strictly balanced' graphs~$H$ 
three different approaches~\cite{Vu2000,DL,MS} have been developed during the years~2000--2012, 
which each establish a form of inequality~\eqref{intro:SGT} for~$t \le \mu = O(\log n)$. Our methods allow us to break this logarithmic barrier slightly, answering a question of Janson and Ruci{\'n}ski~\cite{DLP}; see Section~\ref{sec:applications:RG:SG} for more~details.
\begin{theorem}[Subgraph counts: sub-Gaussian type upper tail bounds]%
\label{thm:intro:sgcount}
For any strictly balanced graph~$H$ there are $n_0,c,C,\xi>0$ such that inequality~\eqref{intro:SGT} holds 
whenever~$n \ge n_0$ and~$0 < t \le \mu \le (\log n)^{1+\xi}$. 
\end{theorem}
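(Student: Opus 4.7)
The plan is to reduce the problem to the upper tail machinery developed for Theorem~\ref{thm:intro:indshg}, and to show that in the regime $\mu \le (\log n)^{1+\xi}$ the bound automatically collapses to a sub-Gaussian form. The encoding is standard: let $\cH$ be the $e(H)$-uniform hypergraph on $V(\cH) = \binom{[n]}{2}$ whose hyperedges are the edge-sets of copies of $H$ in $K_n$, so that $X = X_H = e(\cH_p)$ for $\cH_p = \cH[V_p]$, where $V_p \subseteq \binom{[n]}{2}$ is the random edge-set of $G_{n,p}$. Strict balance of $H$ means the unique densest subgraph is $H$ itself, which is precisely what is needed to verify the $\Delta_q$-type extension hypotheses underlying the machinery (with $q$ essentially equal to $v(H)/2$, matching $e(\cH) = \Theta(N^{v(H)/2})$ for $N = \binom{n}{2}$).

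What I would then apply is a quantitatively refined version of the paper's upper tail that keeps the small-$\eps$ behaviour explicit:
\[
\Pr(X \ge (1+\eps)\mu) \le \exp\bigl(-c\eps^2 \min\{\mu,\; \mu^{1/q}\log(e/p)\}\bigr) \qquad (0 < \eps \le 1).
\]
For strictly balanced $H$ in our regime, $\mu = \Theta(n^{v(H)} p^{e(H)}) \le (\log n)^{1+\xi}$ forces $\log(e/p) = \Theta(\log n)$, so the second argument of the $\min$ exceeds the first provided $\mu^{(q-1)/q} \ll \log n$, i.e., for $\xi < 2/(v(H)-2)$. For any such $\xi = \xi(H) > 0$ the $\min$ collapses to $\mu$, yielding $\Pr(X \ge (1+\eps)\mu) \le \exp(-c\eps^2\mu)$.

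Because $H$ is strictly balanced and we are in the Poisson range, the off-diagonal contributions to $\Var X$ from proper subgraphs $H' \subsetneq H$ are each $o(\mu)$, so $\Var X = (1+o(1))\mu$. Setting $\eps = t/\mu \in (0,1]$ gives $\Pr(X \ge \mu + t) \le \exp(-c' t^2/\Var X)$; for $t$ so small that this exponent is $O(1)$, the prefactor $C \ge 1$ absorbs the trivial bound $\Pr(\cdot) \le 1$.

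The main obstacle, which represents the genuine novelty over prior sub-Gaussian subgraph-count approaches, is establishing the $\eps^2$-scaling of $c(\eps)$ for small $\eps$. The statement of Theorem~\ref{thm:intro:indshg} in the excerpt only asserts some positive $c(\eps)$, whereas the above argument needs the quadratic dependence. Extracting this refinement means re-running the BK-sparsification induction while carefully tracking the $\eps^2$ cost of each Chernoff/Bernstein step (and not letting recursion inflate it to $\eps^k$ with $k \to \infty$); once this is done, the calculation above goes through essentially by bookkeeping. A minor auxiliary issue is that $q = v(H)/2$ is non-integer when $v(H)$ is odd, which is handled either by formulating $\Delta_q$ for non-integer $q$ or by a slight reworking of the hypergraph encoding.
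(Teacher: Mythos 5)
Your high-level outline (encode as a hypergraph edge-count, collapse the $\min$ to $\mu$ in the regime $\mu=\mathrm{poly}(\log n)$, use $\Var X\sim\mu$ in the Poisson range, absorb small $t$ into the prefactor) matches the paper's strategy, but there are two substantive errors in the middle that make the proposal fail as written.

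First, you identify the ``main obstacle'' as extracting the $\eps^2$-dependence of $c(\eps)$ and suggest re-running the BK-sparsification induction to track it. This is not actually an obstacle: the paper's internal upper-tail theorems already carry the $\eps^2$-scaling explicitly. Theorems~\ref{thm:pre:extendedp} and~\ref{thm:pre:extended:small} are stated with the term $\varphi(t/\mu)\mu$ in the exponent, where $\varphi(x)=(1+x)\log(1+x)-x$, and Remark~\ref{rem:varphi} records $\varphi(\eps)\ge\min\{\eps^2,\eps\}/3$; so the quadratic behaviour for small $\eps$ is built in and needs no re-derivation. Theorem~\ref{thm:intro:indshg} suppresses this for presentation, but the underlying machinery does not.

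Second, and more seriously, your plan to invoke (a refinement of) Theorem~\ref{thm:intro:indshg} in the subgraph-count encoding does not go through, and the ``$q$ essentially equal to $v(H)/2$'' heuristic is a symptom of that. The hypotheses of Theorem~\ref{thm:intro:indshg} demand \emph{both} $e(\cH)\ge\gamma N^q$ and $\Delta_q(\cH)\le D$ for the \emph{same} integer $q<k$. In the encoding where $V(\cH)=E(K_n)$ and hyperedges are copies of $H$ (Example~\ref{ex:EE}), one has $e(\cH)=\Theta(n^v)$ and $N=n^2$, so the first condition forces $q\le v/2$; but $\Delta_q(\cH)=O(1)$ requires $q\ge e_H-\delta_H+1$, since only that many edges pin down the vertex set. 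For every nontrivial strictly balanced $H$ one has $e_H-\delta_H+1>v_H/2$ (e.g.\ $K_3$: $2>1.5$; $C_4$: $3>2$), so no admissible $q$ exists and Theorem~\ref{thm:intro:indshg} is simply inapplicable here. This is also why the ``non-integer $q$'' worry appears: you were solving $e(\cH)=\Theta(N^q)$ for $q$, which is not how $q$ is chosen. The paper instead routes through the small-expectations inequality Theorem~\ref{thm:pre:extended:small} (whose hypothesis is $\max_j\mu_j\le AN^{-\alpha}$, not $e(\cH)\ge\gamma N^q$), applied \emph{twice}: once in the edge-exposure setup of Example~\ref{ex:EE} with $\ell=1$, $k=e$, $q=e-\delta+1$, $N=n^2$, and once in the vertex-exposure setup of Example~\ref{ex:VE} with $\ell=2$, $k=q=v$, $N=n$. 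Taking the better of the two gives the exponent $s=\min\{v-1,e-\delta+1\}$ in Theorem~\ref{thm:sg:small}, which collapses the $\min$ to $\eps^2\mu$ under $\mu^{(s-1)/s}\le\Lambda\log n$, i.e.\ for $\xi\le 1/(s-1)$. Your reduction can be repaired by replacing the appeal to Theorem~\ref{thm:intro:indshg} with Theorem~\ref{thm:pre:extended:small} in the Example~\ref{ex:EE} setup (giving some positive but possibly suboptimal $\xi$), but as written the key application step is invalid.
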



\subsection{Glimpse of the proof strategy}\label{sec:proof:idea}
In contrast to most of the previous work, in this paper we take a more combinatorial perspective to concentration of measure (and avoid induction via a more iterative point of view). 
Our high-level proof strategy proceeds roughly as follows.  
In the \emph{deterministic part} of the argument, we define several `good' events $\cE_i=\cE_i(\cH,\eps)$, and show that the following implication holds:
\begin{equation}\label{eq:ps:det}
\text{all $\cE_i$ hold} \qquad \Longrightarrow \qquad X < (1+\eps)\E X .
\end{equation}
In the \emph{probabilistic part} of the argument, we show that for some suitable parameter~$\Psi$ we have  
\begin{equation}\label{eq:ps:prob}
\Pr(\text{some $\cE_i$ fails}) \le \exp(- \Psi) .
\end{equation}
Combining both parts then readily yields an exponential upper tail estimate of the form 
\[
\Pr(X \ge (1+\eps)\E X) \le \Pr(\text{some $\cE_i$ fails}) \le \exp(- \Psi) .
\]
In this paper we illustrate the above approach by implementing~\eqref{eq:ps:det}--\eqref{eq:ps:prob} in a general Kim--Vu/Janson--Ruci{\'n}ski type setup. 
To communicate our ideas more clearly, our below informal discussion again uses the simpler random induced subhypergraph setup (a more detailed sketch is given in Sections~\ref{sec:form}--\ref{sec:sketch}).

For the deterministic part~\eqref{eq:ps:det}, we shall crucially 
exploit a good event $\cE_{Q,\eps}$ of the following form: all subhypergraphs with `small' maximum degree have `not too many' edges, 
i.e., that $e(\cJ) < (1+\eps/2) \E X$ holds for all $\cJ \subseteq \cH_p$ with $\Delta_1(\cJ) \le Q$, say.  
Our \emph{sparsification idea} proceeds roughly as follows. 
First, using combinatorial arguments (and further good events) we find a nested sequence of subhypergraphs 
\begin{equation}\label{eq:ps:nested}
\cH_p = \cJ_q \supseteq \cJ_{q-1} \supseteq \cdots \supseteq \cJ_{2} \supseteq \cJ_1 ,
\end{equation} 
which gradually decreases the maximum degree down to $\Delta_1(\cJ_1) \le Q$. 
The crux is that $\cE_{Q,\eps}$ then implies $e(\cJ_1) < (1+\eps/2) \E X$. 
In the second step we exploit various good events (and properties of the constructed sequence) 
to show that we obtained $\cJ_1$ by removing relatively few edges from $\cH_p$, such~that 
\begin{equation}\label{eq:ps:nestedX}
X = e(\cH_p) = e(\cJ_1) + \sum_{1 \le j < q} e(\cJ_{j+1} \setminus \cJ_j) < (1+\eps/2) \E X + (\eps/2) \E X = (1+\eps)\E X .
\end{equation}
In fact, the combinatorial arguments leading to~\eqref{eq:ps:nested}--\eqref{eq:ps:nestedX} develop a `maximal matching' based sparsification idea from~\cite{AP}, 
which is key for handling some vertices of $\cH_p$ with exceptionally high degrees, say.

The probabilistic part~\eqref{eq:ps:prob} works hand in hand with the above deterministic arguments. 
Similar to~$\cE_{Q,\eps}$, we shall throughout work with `relative estimates', i.e., which are valid for all subhypergraphs of~$\cH_p$ satisfying some extra properties (e.g., that $\Delta_{j}(\cJ) \le R_j$ holds for all $J \subseteq \cH_p$ with $\Delta_{j+1}(\cJ) \le R_{j+1}$). 
These estimates are crucial for bringing combinatorial arguments of type~\eqref{eq:ps:nested}--\eqref{eq:ps:nestedX} into play (instead of relying solely on inductive reasoning), and 
they hinge on a concentration inequality from~\cite{AP}. 
Perhaps surprisingly, this inequality allows us to estimate $\Pr(\neg\cE_{Q,\eps})$ and similar `relative' events \emph{without} taking a union bound over all subhypergraphs. 
For the matching based sparsification idea briefly mentioned above, we exploit the fact that the relevant `matchings' guarantee the `disjoint occurrence' of suitably defined events. 
This observation allows us to estimate the probability of certain `bad' events via BK-inequality based moment arguments. 

Finally, in our probabilistic estimates the logarithmic terms in~\eqref{int:bd:2}--\eqref{eq:intro:indshg} arise in a fairly delicate way (which comes as no surprise, since there are examples where~\eqref{eq:intro:UT} is sharp). 
We now illustrate the underlying technical idea for binomial random variables $X \sim \Bin(n,p)$ with $\mu = np$, where for $x \ge e (e/p)^{\alpha}\mu$ we have 
\[
\Pr(X \ge x) \le \binom{n}{x}p^x \le \left(\frac{e \mu}{x}\right)^x \le \left(\frac{p}{e}\right)^{\alpha x} = \exp\Bigl(-\alpha x \log\bigl(e/p\bigr)\Bigr) .
\]
Our proofs apply this `overshooting the expectation yields extra terms in the exponent' idea to a set of carefully chosen auxiliary random variables. 
As the reader can guess, the technical details are, e.g., complicated by the fact 
that the edges of $\cH_p$ are \emph{not} independent, and that we may \emph{not} assume $x \gg \mu$.

\subsection{Guide to the paper} 
In Section~\ref{sec:prelim} we introduce our key probabilistic tools. 
In Section~\ref{sec:core} we give a fairly detailed proof outline, and 
present our main combinatorial and probabilistic arguments in the 
random induced subhypergraphs setup. 
In Section~\ref{sec:general} we then extend the discussed arguments to a more general setup. 
In Section~\ref{sec:UT} we derive some concrete upper tail inequalities,  
which in Section~\ref{sec:applications} are then applied to several pivotal examples.

The reader interested in our \emph{proof techniques} may wish to focus on Section~\ref{sec:core}, which contains our core ideas and arguments. 
The reader interested in \emph{applications} may wish to skip to Section~\ref{sec:applications}, 
where the `easy-to-apply' concentration inequalities of Section~\ref{sec:easy} are used in several different examples. 
Finally, the reader interested in \emph{comparing} our results \emph{with the literature} may wish to focus on the general setup of Section~\ref{sec:general:setup} and the concentration inequalities in Section~\ref{sec:extended}.

\section{Probabilistic preliminaries}\label{sec:prelim}
\subsection{A Chernoff-type upper tail inequality}\label{sec:c}
In this subsection we state a powerful Chernoff-type upper tail inequality from~\cite{AP}. 
It might be instructive to check that, for sums $X=\sum_{i \in \cA} \xi_i$ of independent random variables $\xi_i \in [0,1]$, inequality~\eqref{eq:C} below reduces to the classical Chernoff bound (writing $i \sim j$ if $i=j$, for $Y_{i}=\xi_{i}$, $\cI=\cA$ and $C=1$ we have $X=Z_C$). 
We think of~$\sim$ as a `dependency relation': $\alpha \not\sim \beta$ implies that the random variables $Y_{\alpha}$ and $Y_{\beta}$ are independent. 
For indicator random variables $Y_{\alpha} \in \{0,1\}$ the condition $\max_{\beta \in \cJ}\sum_{\alpha \in \cJ: \alpha \sim \beta} Y_{\alpha} \leq C$ 
essentially ensures that each variable $Y_{\beta}$ with $\beta \in \cJ$ 
`depends' on at most~$C$ variables $Y_{\alpha}$ with $\alpha \in \cJ$. 
Intuitively, $Z_C$~defined below thus corresponds to an approximation of $X=\sum_{\alpha \in \cI} Y_{\alpha}$ with `bounded dependencies'. 
\begin{theorem}\label{thm:C}
Given a family of non-negative random variables $(Y_{\alpha})_{\alpha \in \cI}$ with $\sum_{\alpha \in \cI} \E Y_{\alpha} \le \mu$, assume that~$\sim$ is a symmetric relation on $\cI$ such that each $Y_{\alpha}$ with $\alpha \in \cI$ is independent of $\{Y_{\beta}: \text{$\beta \in \cI$ and $\beta \not\sim\alpha$}\}$. 
Let $Z_C=\max \sum_{\alpha \in \cJ} Y_{\alpha}$, where the maximum is taken over all $\cJ \subseteq \cI$ with $\max_{\beta \in \cJ}\sum_{\alpha \in \cJ: \alpha \sim \beta} Y_{\alpha} \leq C$. 
Set $\varphi(x)=(1+x)\log(1+x)-x$.  
Then for all $C,t>0$ we have 
\begin{equation}\label{eq:C}
\begin{split}
\Pr(Z_C \ge \mu +t) 
& 
\le \exp\left(-\frac{\varphi(t/\mu)\mu}{C} \right) = 
e^{-\mu/C} \cdot 
\left(\frac{e\mu}{\mu+t}\right)^{(\mu+t)/C} \\ 
& 
\le \min\left\{ \exp\left(-\frac{t^2}{2C(\mu +t/3)}\right), \; \left(1+\frac{t}{2\mu}\right)^{-t/(2C)}\right\}  
\le \left(1+\frac{t}{\mu}\right)^{-t/(4C)}. 
\end{split}
\end{equation}
\end{theorem}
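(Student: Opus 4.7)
The plan is to adapt the classical Chernoff/Cram\'er exponential-moment method to the locally-dependent setup encoded by~$\sim$. For any $\lambda>0$, Markov's inequality applied to $e^{\lambda Z_C/C}$ gives $\Pr(Z_C \ge \mu+t) \le e^{-\lambda(\mu+t)/C}\,\E e^{\lambda Z_C/C}$, so the entire bound~\eqref{eq:C} would reduce to establishing the Poissonian moment generating function estimate
\[
\E e^{\lambda Z_C/C} \le \exp\bigl(\mu(e^{\lambda}-1)/C\bigr).
\]
Optimising at $\lambda^\ast = \log(1+t/\mu)$ would then produce the first expression $\exp(-\varphi(t/\mu)\mu/C)$ in~\eqref{eq:C}, and the remaining inequalities there are routine elementary manipulations of $\varphi(x)=(1+x)\log(1+x)-x$ (for example, $\varphi(x) \ge x^2/(2+2x/3)$ for $x\ge 0$, and $\varphi(x) \ge (x/2)\log(1+x/2)$, as in the derivation of Bernstein's inequality from the Cram\'er bound).

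The heart of the matter is thus the MGF bound for $Z_C$. For any \emph{fixed $\sim$-independent} subset $\cJ \subseteq \cI$, the variables $(Y_\alpha)_{\alpha \in \cJ}$ are mutually independent; combining the convexity inequality $e^{\lambda y/C} \le 1 + (y/C)(e^{\lambda}-1)$ valid for $y \in [0,C]$ with $Y_\alpha \le C$ on an admissible $\cJ$ then gives the desired Poissonian MGF bound for $\sum_{\alpha \in \cJ} Y_\alpha$ at once. My plan would be to reduce the full $Z_C$ to this independent case via a randomised selection: given the (random) admissible set $\cJ^\ast$ attaining~$Z_C$, assign each index an independent uniform label, iteratively peel off a $\sim$-independent sub-collection $\cS \subseteq \cJ^\ast$, and use the local-sum bound $\sum_{\alpha \in \cJ^\ast,\, \alpha \sim \beta} Y_\alpha \le C$ to show that the expected $Y$-mass of $\cS$ is at least $\sum_{\alpha \in \cJ^\ast} Y_\alpha/C = Z_C/C$. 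An alternative route is a one-index-at-a-time exposure argument building a Poissonian supermartingale in the spirit of Janson's inequality, exploiting that revealing $Y_\alpha$ only alters the conditional law of $\{Y_\beta:\beta\sim\alpha\}$.

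The main obstacle is that $Z_C$ is a supremum over an exponentially large combinatorial family of admissible $\cJ$, so a naive union bound over witnesses would destroy the exponential estimate: the MGF bound must be obtained from a single probabilistic/combinatorial construction that automatically locates the worst $\cJ$. A secondary difficulty is that the $Y_\alpha$ are non-negative but need not be individually bounded; on an admissible $\cJ$ one has $Y_\alpha \le C$ only through the $\alpha \sim \alpha$ contribution to the local sum, and so some care is required if one wants the statement to hold without tacit reflexivity of~$\sim$. Reconciling the combinatorial maximum with the scale of the~$Y_\alpha$ inside a single MGF calculation is where the genuine work of~\cite{AP} sits, and I would expect any self-contained proof to spend most of its technical effort there.
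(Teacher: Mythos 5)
The paper does not actually prove Theorem~\ref{thm:C} here; it states it and cites~\cite{AP}.  Nevertheless, the accompanying Remarks~\ref{rem:C:NC}--\ref{rem:C:NC:2} reveal the shape of that proof: all that is used is the one-sided product estimate $\E\bigl(\prod_{i \in [s]} Y_{\alpha_i}\bigr) \le \prod_{i \in [s]} \E Y_{\alpha_i}$ for pairwise non-$\sim$ tuples.  That is exactly what a \emph{polynomial moment} argument requires and is strictly weaker than what a moment-generating-function bound for $Z_C$ would need.  Your plan is therefore a genuinely different route, and it has a real gap precisely where you locate ``the heart of the matter.''  For a fixed $\sim$-independent family the Poissonian MGF bound is fine, but $Z_C$ is a supremum over \emph{all} admissible $\cJ$ (which need not be $\sim$-independent), and its optimizer $\cJ^\ast$ is data-dependent, so conditioning on it and then invoking the fixed-$\cJ$ estimate is circular.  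The randomised peeling you sketch does not close this: producing a $\sim$-independent $\cS \subseteq \cJ^\ast$ with large $Y$-mass only replaces one random maximum by another, and the claim that $\cS$ has expected mass $\ge Z_C/C$ implicitly uses a local \emph{degree} bound where the hypothesis only gives a local \emph{weighted-sum} bound $\sum_{\alpha\in\cJ:\alpha\sim\beta}Y_\alpha\le C$.

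The argument in~\cite{AP} sidesteps the supremum by bounding a \emph{fixed} polynomial functional rather than an MGF.  For an integer $m\ge1$, set $W_m = \sum \prod_{i=1}^{m} Y_{\alpha_i}$, the sum running over ordered $m$-tuples $(\alpha_1,\ldots,\alpha_m)\in\cI^m$ with $\alpha_i\not\sim\alpha_j$ for $i\neq j$.  By the independence (or negative-correlation) hypothesis, $\E W_m \le \bigl(\sum_{\alpha\in\cI}\E Y_\alpha\bigr)^m \le \mu^m$.  Deterministically, if $Z_C\ge z$ and $\cJ$ is a witness, then iterating the local-sum bound shows that each step ``pick $\alpha_i\in\cJ$, delete $\{\beta\in\cJ:\beta\sim\alpha_i\}$'' removes at most $C$ of total weight, so the nested sums give $W_m\ge\prod_{i=0}^{m-1}(z-iC)$ whenever $m\le z/C$.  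Markov applied to $W_m$ with $z=\mu+t$ and the near-optimal $m\approx t/C$ yields exactly $\exp(-\varphi(t/\mu)\mu/C)$ of~\eqref{eq:C}; the remaining inequalities in~\eqref{eq:C} are, as you say, elementary properties of $\varphi$.  The structural lesson is that one should not try to control the exponential moment of a combinatorial maximum directly; instead one finds a fixed quantity (here the $m$-th product moment over non-$\sim$ tuples) whose expectation is easy to bound and which is deterministically forced to be large whenever the maximum exceeds its target.
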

\begin{remark}\label{rem:C}
In applications there often is a family of independent random variables $(\xi_\sigma)_{\sigma \in \cA}$ such that each~$Y_{\alpha}$ is a function of $(\xi_\sigma)_{\sigma \in \alpha}$. 
Then it suffices to define $\alpha \sim \beta$ if $\alpha \cap \beta \neq \emptyset$ (as $\alpha \not\sim \beta$ implies that~$Y_{\alpha}$ and~$Y_{\beta}$ depend on disjoint sets of variables~$\xi_\sigma$). 
\end{remark}
\begin{remark}\label{rem:C:NC}
Theorem~\ref{thm:C} remains valid after weakening the independence assumption to a form of negative correlation: it suffices if 
$\E (\prod_{i \in [s]} Y_{\alpha_i}) \le \prod_{i \in [s]}  \E Y_{\alpha_i}$ for all $(\alpha_1, \ldots, \alpha_s) \in \cI^s$ satisfying $\alpha_i \not\sim \alpha_j$ for $i \neq j$.  
For example, writing $\alpha \sim \beta$ if $\alpha \cap \beta \neq \emptyset$, it is not hard to check that this weaker condition holds for variables of form $Y_{\alpha} = w_{\alpha}\indic{\alpha \in \cH_m}$, where the uniform model $\cH_m=\cH[V_m(\cH)]$ is defined as in Section~\ref{sec:uniform}. 
\end{remark}
\begin{remark}\label{rem:C:NC:2}
Replacing the assumption $\sum_{\alpha \in \cI} \E Y_\alpha\le \mu$ of Theorem~\ref{thm:C} with $\sum_{\alpha \in \cI} \lambda_{\alpha} \le \mu$ and $\min_{\alpha \in \cI}\lambda_{\alpha} \ge 0$, the correlation condition of Remark~\ref{rem:C:NC} can be further weakened to $\E (\prod_{i \in [s]} Y_{\alpha_i}) \le \prod_{i \in [s]} \lambda_{\alpha_i}$. 
\end{remark}
\begin{remark}\label{rem:varphi}
Note that inequality~\eqref{eq:C} implies $\varphi(\eps) \ge \eps^2/[2(1+\eps/3)] \ge \min\{\eps^2,\eps\}/3$ for $\eps \ge 0$. 
\end{remark}
\noindent
Remarks~\ref{rem:C:NC}--\ref{rem:C:NC:2} suggest that the proof of Theorem~\ref{thm:C} is fairly robust (it exploits independence only in a limited way; see also the discussion in~\cite{AP} and the proof of Lemma~4.5 in~\cite{K4free}).

\subsection{The BK-inequality}\label{sec:BK}
In this subsection we state a convenient consequence of the BK-inequality of van~den~Berg and Kesten~\cite{BK} and Reimer~\cite{BKR}. 
As usual in this context, we consider a sample space~$\Omega=\Omega_1 \times \cdots \times \Omega_M$ with finite~$\Omega_i$, and 
write~$\omega=(\omega_1, \ldots, \omega_M) \in \Omega$. 
Given an event~$\cE \subseteq \Omega$ and an index set $I \subseteq [M]=\{1, \ldots, M\}$, we define  
\[
\restr{\cE}{I} = \bigl\{ \omega \in \cE \: : \: \text{for all $\pi \in \Omega$ we have $\pi \in \cE$ whenever $\pi_j=\omega_j$ for all $j \in I$}\bigr\} .
\]
In intuitive words, the event $\restr{\cE}{I}$ occurs if knowledge of the variables indexed by~$I$ already `guarantees' the occurrence of $\cE$ (note that all other variables are irrelevant for $\restr{\cE}{I}$).
Given a collection $(\cE_i)_{i \in \cC}$ of events, for the purposes of this paper it seems easiest to introduce the convenient definition  
\begin{equation}\label{eq:square}
\boxdot_{i \in \cC}\cE_i = \Bigl\{\text{there are pairwise disjoint $I_i \subseteq [M]$ such that $\bigcap_{i \in \cC}\restr{\cE_{i}}{I_i}$ occurs}\Bigr\} .
\end{equation}
The event $\boxdot_{i \in \cC}\cE_i$ intuitively states that all~$\cE_i$ `occur disjointly', 
i.e., that there are \emph{disjoint} subsets of variables which guarantee the occurrence of each event~$\cE_i$ (the definition of~$\boxdot$ 
sidesteps that the usual box product $\square$ is, in general, not associative). 
The general BK-inequality of Reimer~\cite{BKR} implies the following estimate. 
\begin{theorem}\label{thm:BKR:ind}
Let~$\Pr$ be a product measure on~$\Omega=\Omega_1 \times \cdots \times \Omega_M$ with finite~$\Omega_i$. 
Then for any collection~$(\cE_i)_{i \in \cC}$ of events we have  
\begin{equation}\label{eq:BKR:ind}
\Pr\bigl(\boxdot_{i \in \cC}\cE_i\bigr) \le \prod_{i \in \cC}\Pr(\cE_i) .
\end{equation}
\end{theorem}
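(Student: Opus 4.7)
The plan is to deduce the inequality directly from Reimer's version of the BK-inequality~\cite{BKR}, after verifying that the definition of $\boxdot$ matches the standard notion of ``disjoint occurrence'' used there. First I would unpack the restriction operator: by construction, $\restr{\cE}{I}$ is precisely the set of outcomes $\omega$ for which the coordinates indexed by~$I$ already force $\omega \in \cE$, i.e., $\restr{\cE}{I}$ is the largest subevent of~$\cE$ measurable with respect to $\sigma(\omega_j : j \in I)$. In the terminology of~\cite{BK,BKR}, this is exactly the statement that $I$ is a witness set for~$\cE$ at~$\omega$, so $\boxdot_{i \in \cC} \cE_i$ coincides with the event that there exist pairwise disjoint witnesses for all the~$\cE_i$ simultaneously.

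Next I would reduce to the case of a finite index set. Let $\cC_0 = \{i \in \cC : \cE_i \neq \Omega\}$ be the set of non-trivial events. For any $i$ with $\cE_i = \Omega$ we have $\restr{\cE_i}{\emptyset} = \Omega$, so such indices can be handled by taking $I_i = \emptyset$ and contribute a factor of~$1$ to the product on the right of~\eqref{eq:BKR:ind}. In particular, $\boxdot_{i \in \cC} \cE_i = \boxdot_{i \in \cC_0} \cE_i$. On the other hand, $[M]$ admits at most~$M$ pairwise disjoint non-empty subsets, so if $|\cC_0| > M$ then $\boxdot_{i \in \cC_0} \cE_i = \emptyset$ and~\eqref{eq:BKR:ind} holds trivially. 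We may therefore assume that $\cC_0$ is finite.

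At this point I would invoke Reimer's theorem for the finite collection $(\cE_i)_{i \in \cC_0}$ in the finite product space $\Omega_1 \times \cdots \times \Omega_M$, which asserts exactly that the probability of a simultaneously-disjoint occurrence is at most the product of the marginal probabilities:
\[
\Pr\bigl(\boxdot_{i \in \cC_0} \cE_i\bigr) \le \prod_{i \in \cC_0} \Pr(\cE_i) \le \prod_{i \in \cC} \Pr(\cE_i).
\]
Reimer~\cite{BKR} originally treated $\{0,1\}^M$; the extension to arbitrary finite product spaces is routine (each~$\Omega_i$ may be embedded into a sufficiently large binary product and the events pulled back), and is standard in the literature.

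The only real obstacle is the bookkeeping around the definition~\eqref{eq:square}: as the paper itself notes, the box product~$\square$ is not associative, so one needs the ``all disjoint at once'' formulation encoded by~$\boxdot$ to match the hypothesis of Reimer's theorem. Once that identification is made and the trivial/infinite indices are pruned as above, the inequality is immediate.
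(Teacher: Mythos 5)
Your proof is correct and takes essentially the same route as the paper, which gives no proof of its own and simply asserts that Reimer's general BK-inequality implies the statement; your added bookkeeping (pruning trivial events, reducing to a finite index set, embedding into a binary product) is sound. The one small imprecision is that Reimer's theorem in~\cite{BKR} is the two-event inequality $\Pr(A\,\square\,B)\le\Pr(A)\Pr(B)$ on $\{0,1\}^M$; the multi-event form you invoke is a standard corollary obtained by iterating it via the containment $\boxdot_{i\in\cC_0}\cE_i\subseteq\cE_{1}\,\square\,\bigl(\boxdot_{i\in\cC_0\setminus\{1\}}\cE_i\bigr)$, taking $\bigcup_{i\ne 1}I_i$ as a witness set for the second factor, which is precisely the step where the non-associativity you flag would otherwise intervene.
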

\begin{remark}\label{rem:BKR:ind}
For increasing events $\cE_i$, 
\cite{BKkoutn} implies that inequality~\eqref{eq:BKR:ind} 
also holds for~$\Pr$ assigning equal probability to all outcomes 
$\omega \in \{0,1\}^M$ with exactly~$m$ ones 
(as usual, an event~$\cE$ is called increasing if for all~$\omega \in \cE$ and~$\pi \in \Omega$ 
we have~$\pi \in \cE$ whenever~$\omega_j \le \pi_j$ for all~$j \in [M]$). 
\end{remark}

\section{Core ideas and arguments}\label{sec:core}
In this section we present our core combinatorial and probabilistic arguments in a slightly simplified setup. 
Our main focus is on the new proof ideas and methods (which we believe are more useful to the reader than the theorems), so we defer applications and concrete upper tail inequalities to Sections~\ref{sec:UT}--\ref{sec:applications}. 
This organization of the paper also makes the extension to the more general setup of Section~\ref{sec:general} more economical. 
Indeed, similar to the high-level proof strategy discussed in Section~\ref{sec:proof:idea}, the main results of this section are Theorem~\ref{thm:sparse:iter} of form $\Pr(X \ge (1+\eps)\E X) \le \sum_i \Pr(\neg\cE_i)$ and Theorem~\ref{thm:prob} of form $\Pr(\neg\cE_i) \le \exp(-\Psi_i)$. 
Together they yield upper tail inequalities, and in Section~\ref{sec:prf:extended} we adapt both to our more general setup.

In Section~\ref{sec:overview} we give a detailed proof overview, and introduce the simpler random induced subhypergraphs setup (where our main arguments and ideas are more natural). 
As a warm-up, in Section~\ref{sec:iterative} we revisit 
existing inductive concentration methods, and reinterpret some of the underlying ideas.
Section~\ref{sec:spars} contains our key combinatorial arguments, which hinge on `sparsification' ideas and the BK-inequality. 
In Section~\ref{sec:prob} these arguments are complemented by probabilistic estimates, 
which rely on the Chernoff-type tail inequality Theorem~\ref{thm:C}. 
Finally, in Section~\ref{sec:uniform} we demonstrate that our proofs are somewhat `robust'.

\subsection{Overview}\label{sec:overview}
\subsubsection{Simplified setup: random induced subhypergraph~$\cH_p$}\label{sec:basic}
Our basic setup concerns random induced subhypergraphs.  
For a hypergraph~$\cH$ with vertex set~$V(\cH)$, let~$V_p(\cH)$ denote the binomial random vertex subset 
where each~$v \in V(\cH)$ is included independently with probability~$p$. 
We define the subhypergraph of $\cH$ induced by $V_p(\cH)$ as 
\begin{equation}
\label{def:Hp}
\cH_p =\cH[V_p(\cH)]. 
\end{equation}
Given non-negative weights $(w_f)_{f \in \cH}$, for every $\cG \subseteq \cH$ we set 
\begin{equation}
\label{def:wG}
w(\cG) = \sum_{f \in \cG} w_f \indic{f \in \cH_p} ,
\end{equation}
where our main focus is on the weighted number of induced edges~$w(\cH)=w(\cH_p)$. 
The `unweighted' case with $w_f=1$ occurs frequently in the literature (see, e.g.,~\cite{UT,RR1994,UTAP,AP,WG2012}), 
where the random variable $w(\cH)=e(\cH_p)$ simply counts the number of edges of~$\cH$ induced by~$V_p(\cH)$. 
Our arguments will also carry over to the uniform variant $\cH_m =\cH[V_m(\cH)]$ defined in Section~\ref{sec:uniform} (see Remark~\ref{rem:prob}).

To formulate our results, we need some more notation and definitions. 
As usual, we write 
\begin{align}
\label{def:GammaUG}
\Gamma_U(\cH) &= \{f \in \cH: U \subseteq f\} ,\\
\label{def:DeltajG}
\Delta_j(\cH) &= \max_{U \subseteq V(\cH): |U|=j} |\Gamma_U(\cH)| . 
\end{align}
In concrete words, $\Gamma_U(\cH)$ corresponds to the set of all edges $f \in \cH$ that contain the vertex subset~$U\subseteq V(\cH)$, 
and $\Delta_j(\cH)$ denotes the maximum number of edges that contain 
$j$~given vertices (which we think of as a `maximum degree' parameter).  
Inspired by~\cite{DL,KimVu2000,Vu2000,Vu2002}, we now define the following two crucial \emph{assumptions}~(P') and~(P$q$), where $q \in \NN$ is a parameter:   
{\begin{enumerate}
\leftmargin 1.5em \itemindent 2.5em \itemsep 0.125em \parskip 0em  \partopsep=0pt \parsep 0em 
	\item[(P')] Assume that 
	$\max_{f \in \cH}|f| \le k$, $\max_{f \in \cH}w_f\le L$ and $v(\cH) \le N$. Define $\mu=\E w(\cH)$ and  
\begin{equation}
\label{def:muj:basic}
\mu_j = \max_{U \subseteq V(\cH): |U|=j} \sum_{f \in \Gamma_U(\cH)}p^{|f|-|U|} . 
\end{equation}
	\item[(P$q$)] Assume that $\Delta_q(\cH) \le D$. 
\end{enumerate}}\noindent 
Property~(P') ensures that every edge $f \in \cH$ has at most~$k$ vertices, that the associated edge weights satisfy $0 \le w_f \le L$, and that~$\cH$ contains at most $v(\cH) \le N$ vertices. 
Although we shall not assume this, our main focus is on the common case where $k+L=O(1)$ and $N=\omega(1)$ holds. 
Property~(P$q$) will be useful when $D=O(1)$ holds for $q < k$ (this is trivial for $q=k$). 
The key parameters $\mu_j$ intuitively quantify the `dependencies' between the edges, 
and we think of them as average variants of the `maximum degree' parameter $\Delta_j(\cH_p)$ from~\eqref{def:DeltajG}.  
To see this, note that $\Pr(f \in \cH_p \mid U \subseteq V_p(\cH))=p^{|f|-|U|}$, so~\eqref{def:muj:basic} equals 
\begin{equation}
\label{def:muj:basic:alt}
\mu_j = 
\max_{U \subseteq V(\cH): |U|=j} \E\bigl( |\Gamma_U(\cH_p)| \; \big| \; U \subseteq V_p(\cH)\bigr) .
\end{equation}
In concrete words, after conditioning on the presence of any vertex subset $U \subseteq V_p(\cH)$ of size $|U|=j$, the expected number of edges in $\cH_p$ that contain~$U$ is at most~$\mu_{j}$ (for this reason, $\mu_j$ can be interpreted as the `maximum average effect' of any $j$ vertices or variables, see also~\cite{KimVu2000,Vu2002}). 
For example, if the edges of the $k$-uniform hypergraph $\cH=\cH_n$ correspond to $k$-term arithmetic progressions, then we can take $V(\cH)=[n]$, $N=n$, $L=1$, $\mu = \Theta(n^2p^k)$ and $\mu_j =\Theta(n^{2-j}p^{k-j})$ for $1 \le j \le q=2$ (note that $\Delta_2(\cH)=O(1)$ holds).

\subsubsection{The basic form of our tail estimates}\label{sec:form} 
In this subsection we discuss the approximate form of our upper tail estimates. 
As we shall see in Section~\ref{sec:iterative}, 
for hypergraphs~$\cH$ with $\Delta_q(\cH) \le D$ the usual inductive concentration of measure methods~\cite{KimVu2000,DL,Vu2002} 
yield basic inequalities of the following form~(omitting several technicalities).  
Given positive parameters~$(R_j)_{1 \le j \le q}$ with~$R_q \ge D$, 
for every~$\eps>0$ there are positive constants~$a=a(\eps,k)$ and~$b=b(k)$ such that~roughly 
\begin{equation}\label{heur:ind}
\Pr(e(\cH_p) \ge (1+\eps)\mu) \le 
\exp\bigl(-a \mu/R_1 \bigr)  
+ \sum_{1 \le j < q} \left(\frac{\mu_j}{R_j}\right)^{b R_j/R_{j+1}} ,
\end{equation}
say (see~\eqref{eq:cl:basic} of Claim~\ref{cl:basic}; the freedom of choosing the parameters~$(R_j)_{1 \le j \le q}$ is part of the method, though one naturally aims at roughly~$\mu/R_1 \approx R_j/R_{j+1}$). 
The `prepackaged versions' of these inequalities 
usually assume that the parameters satisfy roughly $\mu/R_1 \ge \lambda$ and $R_j \ge \max\{2\mu_j,\lambda R_{j+1}\}$ 
(see, e.g., Theorem~4.2 in~\cite{Vu2002} or Theorem~3.10 in~\cite{DL}).  
In this case there are positive constants $c=c(a,b)$ and $C=C(q)$ such that  
\begin{equation}\label{heur:ind:2}
\Pr(e(\cH_p) \ge (1+\eps)\mu) \le C \exp\bigl(- c \lambda \bigr) .
\end{equation}

The punchline of this paper is that we can often improve the exponential decay of~\eqref{heur:ind:2} 
if stronger bounds than $R_j \ge 2 \mu_j$ hold. 
For example, setting $\lambda \approx \mu^{1/q}$ and $R_j \approx \lambda^{q-j}$ (similar to, e.g., the proof of Corollary~6.3 in~\cite{Vu2002} or Theorem~2.1 in~\cite{Vu2001}), in the applications of Section~\ref{sec:applications:RISH} we naturally arrive at bounds of form 
\begin{equation}\label{heur:Rjmuj:cond}
\max_{1 \le j < q} \frac{\mu_j}{R_j} \approx \max_{1 \le j < q} \frac{\mu_j}{\mu^{(q-j)/j}} = O(p^{\alpha}) . 
\end{equation}
It might be instructive to check that~\eqref{heur:Rjmuj:cond} holds with $\alpha=1/2$ for $k$-term arithmetic progressions with $k \ge 3$. 
Intuitively, replacing $R_j \ge 2 \mu_j$ by the stronger assumption~\eqref{heur:Rjmuj:cond} 
improves the exponential decay of the sum-terms in~\eqref{heur:ind} by a factor of roughly~$\log(1/p)$ for small~$p$. 
Hence the $\exp\bigl(-a \mu/R_1 \bigr)$ term in~\eqref{heur:ind} is the main obstacle for improving inequality~\eqref{heur:ind:2}. 
Here our new `sparsification' based approach is key: after some technical work it essentially allows us to replace~$R_1$ by 
\[
Q_{1} = \max\bigl\{R_1/\log(1/p), \: B\bigr\}, 
\]
where $B \ge 1$ is some constant (of course, we later need to be a bit careful when $p \approx 1$ holds, e.g., replacing $\log(1/p)$ with $\log(e/p)$, say). 
More concretely, assuming~\eqref{heur:Rjmuj:cond}, for $\mu/R_1 \ge \lambda$, $R_j \ge \lambda R_{j+1}$ and $p=o(1)$ 
we eventually arrive (ignoring some technicalities) at a bound that is roughly of the form 
\begin{equation}\label{heur:iter}
\begin{split}
\Pr(e(\cH_p) \ge (1+\eps)\mu) & \le 
\exp\bigl(-a \mu/Q_1 \bigr)  
+ \sum_{1 \le j < q}\left[\left(\frac{\mu_j}{R_j}\right)^{b R_j/R_{j+1}} + \left(\frac{\mu_j}{R_j}\right)^{a \mu/R_1} \right] \\
& \le C\exp\Bigl(-c \min\bigl\{\mu, \; \lambda\log(1/p)\bigr\} \Bigr) ,
\end{split}
\end{equation}
with $c=c(a,b,\alpha,B)>0$ and $C=q$ (see~\eqref{eq:thm:extended} of~Theorem~\ref{thm:extended}). 
In words, \eqref{heur:iter} essentially adds a logarithmic factor to the exponent of the classical bound~\eqref{heur:ind:2}. 
This improvement of~\eqref{heur:ind}--\eqref{heur:ind:2} is conceptually important, since in several interesting examples 
the resulting estimate~\eqref{heur:iter} is qualitatively best possible (see Section~\ref{sec:applications:RISH}).

\subsubsection{Sketch of the argument}\label{sec:sketch} 
In this subsection we expand on the high-level proof strategy from Section~\ref{sec:proof:idea}, and give a rough sketch of our main \emph{combinatorial} line of reasoning (the full details are deferred to Sections~\ref{sec:iterative}--\ref{sec:prob} and~\ref{sec:prf:extended}). 
As we shall argue in Section~\ref{sec:iterative}, at the conceptual heart of the usual inductive concentration approaches lies the following combinatorial `degree' event $\cD_{j}$: $\Delta_{j+1}(\cH_p) \le R_{j+1}$ implies $\Delta_{j}(\cH_p) \le R_{j}$. 
Given a hypergraph $\cH$ with $\Delta_q(\cH) \le R_q$, for the induced number of edges $e(\cH_p)$ the basic idea is that an iterative application of the events $\cD_{q-1} \cap \cdots \cap \cD_1$ reduces the upper tail problem to 
\begin{equation}\label{eq:heur:eHp:1}
\begin{split}
\Pr(e(\cH_p) \ge (1+\eps)\mu) & \le \Pr(e(\cH_p) \ge (1+\eps)\mu \text{ and } \Delta_q(\cH_p) \le R_q)\\
& \le \Pr(e(\cH_p) \ge (1+\eps)\mu \text{ and } \Delta_1(\cH_p) \le R_1) + \sum_{1 \le j < q} \Pr(\neg\cD_{j}) .
\end{split}
\end{equation}
It turns out that all the probabilities on the right hand side of~\eqref{eq:heur:eHp:1} can easily be estimated by the concentration inequality Theorem~\ref{thm:C} (see Claim~\ref{cl:basic:2} and Theorem~\ref{thm:prob}), which eventually yields a variant of the upper tail estimate~\eqref{heur:ind}. 
As before, the crux is that smaller values of the `maximum degree'~$R_1$ translate into better tail estimates. 
To surpass the usual inductive approaches, similar to~\eqref{heur:iter} our plan is thus to reduce the `degree bound'~$R_1$ down to~$Q_1$, and here our 
new `sparsification idea' will be key, achieving this `degree reduction' by deleting up to~$\eps \mu/2$ edges.

Our starting point is the observation that, via Theorem~\ref{thm:C}, we can strengthen the 
degree event~$\cD_j$ to all subhypergraphs $\cG \subseteq \cH_p$ (see Claim~\ref{cl:basic:2} and Theorem~\ref{thm:prob}). 
Namely, let $\cD_j^+$ denote the event that $\Delta_{j+1}(\cG) \le Q_{j+1}$ implies $\Delta_{j}(\cG) \le Q_{j}$ for all $\cG \subseteq \cH_p$. 
A crucial aspect of our argument is that the events $\cD_j$, $\cD_j^+$ work hand in hand with the following combinatorial `sparsification' event $\cE_q$: $\Delta_{1}(\cH_p) \le R_1$ implies existence of a subhypergraph $\cG \subseteq \cH_p$ with $e(\cH_p \setminus \cG) \le \eps\mu/2$ and $\Delta_{q-1}(\cG) \le Q_{q-1}$ (tacitly assuming $q \ge 2$). 
Intuitively, $\cE_q$ states that the deletion of `few' edges reduces the degree $\Delta_{q-1}(\cH_p)$ down to $\Delta_{q-1}(\cG) \le Q_{q-1}$.

The basic combinatorial idea of our approach is roughly as follows (see Section~\ref{sec:spars} for the more involved details). 
We first (i) obtain the coarse degree bound $\Delta_1(\cH_p) \le R_1$ via an iterative application of the degree events $\cD_{q-1} \cap \cdots \cap \cD_1$, then (ii) exploit the sparsification event $\cE_q$ to find a subhypergraph $\cG \subseteq \cH_p$ with $e(\cH_p \setminus \cG) \le \eps\mu/2$ and $\Delta_{q-1}(\cG) \le Q_{q-1}$, and finally (iii) deduce the improved degree bound $\Delta_1(\cG) \le Q_1$ via an iterative application of the degree events $\cD^+_{q-2} \cap \cdots \cap \cD^+_1$. 
Taking into account that we obtain $\cG \subseteq \cH_p$ by deleting up to $\eps \mu/2$ edges, for hypergraphs $\cH$ with $\Delta_q(\cH) \le R_q$ we eventually arrive at 
\begin{equation}\label{eq:heur:eHp:2}
\begin{split}
\Pr(e(\cH_p) \ge (1+\eps)\mu) & \le \Pr(e(\cG) \ge (1+\eps/2)\mu \text{ and } \Delta_1(\cG) \le Q_1 \text{ for some $\cG \subseteq \cH_p$}) \\
& \qquad + \sum_{1 \le j < q} \Pr(\neg\cD_{j}) + \Pr(\neg\cE_q) + \sum_{1 \le j < q-1} \Pr(\neg\cD^+_{j}).
\end{split}
\end{equation}
The crux is that we can again obtain good tail estimates for $\Pr(e(\cG) \ge (1+\eps/2)\mu \: \cdots)$ and $\Pr(\neg\cD_{j})+\Pr(\neg\cD^+_{j})$ via Theorem~\ref{thm:C} (see Claim~\ref{cl:basic:2} and Theorem~\ref{thm:prob}), so in~\eqref{eq:heur:eHp:2} it remains to bound $\Pr(\neg\cE_q)$.

To estimate the probability that the sparsification event $\cE_q$ fails, we shall rely on combinatorial arguments and the BK-inequality, developing a `maximal matching' based idea from~\cite{AP}. 
Simplifying slightly (see Section~\ref{sec:DL} for the full details), for any vertex set $U \subseteq V(\cH)$ with $|U|=q-1$ we tentatively call $\cK_U \subseteq \Gamma_U(\cH)=\{f \in \cH: U \subseteq f\}$ with $|\cK_U|=r$ an \emph{$r$-star}, where we set $r=Q_{q-1}$ for brevity. 
The basic idea is to take a maximal vertex disjoint collection of $r$-stars in~$\cH_p$, which we denote by~$\cM$ (to clarify: the edges from any two distinct $r$-stars $\cK_U,\cK_W \in \cM$ are vertex disjoint), and remove all edges $f \in \cH_p$ that are incident to~$\cM$, i.e., which share at least one vertex with some $r$-star from~$\cM$. 
Denoting the resulting subhypergraph by~$\cG \subseteq \cH_p$, using maximality of~$\cM$ it is not difficult to argue that $\Delta_{q-1}(\cG) < r=Q_{q-1}$ holds (otherwise we could add another $r$-star to~$\cM$). 
Furthermore, by construction the deleted number of edges is at most 
\begin{equation}\label{eq:heur:removal}
e(\cH_p \setminus \cG) \le \sum_{\cK_U \in \cM}\sum_{f \in \cK_U}\sum_{v \in f} |\Gamma_{\{v\}}(\cH_p)| \le |\cM| \cdot r \cdot k \cdot \Delta_1(\cH_p) . 
\end{equation}
Since the event~$\cE_{q}$ presupposes $\Delta_{1}(\cH_p) \le R_1$, 
we thus see that $|\cM| \le \eps \mu/(2 r k R_1)$ implies $|\cH_p \setminus \cG| \le \eps\mu/2$. 
It remains to estimate the probability that $|\cM|$ is big, and here we shall exploit the fact that the $r$-stars $\cK_{U} \in \cM$ satisfy two properties: they (i)~are pairwise vertex disjoint, and 
(ii)~each `guarantee' that $|\Gamma_U(\cH_p)| \ge r$ holds. 
Intuitively, the point of~(i) and~(ii) is that $|\cM|$ events of from $|\Gamma_U(\cH_p)| \ge r$ `occur disjointly' in the sense of Section~\ref{sec:BK}, which allows us to bring the BK-inequality~\eqref{eq:BKR:ind} into play. 
Indeed, by analyzing a $\boxdot$-based moment of $\sum_{U:|U|=q-1}\indic{|\Gamma_U(\cH_p)| \ge r}$,  
we then eventually obtain sufficiently good estimates for $\Pr(\neg\cE_q)$, as desired 
(see the proofs of Lemma~\ref{lem:sparse:event} and inequality~\eqref{eq:thm:sparse} of Theorem~\ref{thm:prob}).

As the reader can guess, the actual details are more involved. 
For example, instead of just $\cE_q$ for $\Delta_{q-1}(\cdot)$, we also need to consider similar sparsification events 
for the others degrees $\Delta_j(\cdot)$ with $1 \le j < q$. 
In fact, analogous to $\cD^+_j$, these events must moreover apply to all subhypergraphs $\cG \subseteq \cH_p$ simultaneously (see $\cE_{j,\ell}(x,r,y,z)$ defined in Section~\ref{sec:spars}). 
Furthermore, due to technical reasons, the decomposition~\eqref{eq:heur:eHp:2} requires some extra bells and whistles (see~\eqref{eq:thm:sparse:iter} of Theorem~\ref{thm:sparse:iter}). 
Finally, we have also ignored how Theorem~\ref{thm:C} and the BK-inequality~\eqref{eq:BKR:ind} eventually allow us to convert the decompositions~\eqref{eq:heur:eHp:1}--\eqref{eq:heur:eHp:2} into concrete upper tail inequalities of form~\eqref{heur:ind} and~\eqref{heur:iter}; see Sections~\ref{sec:DL}, \ref{sec:prob}, \ref{sec:prf:extended} and~\ref{sec:prob:thm} for 
these technical calculations.

\subsection{Inductive concentration proofs revisited}\label{sec:iterative} %
The goal of this warm-up section is to reinterpret the classical 
inductive concentration proofs from~\cite{DL,KimVu2000,Vu2002} using the following `degree intuition': an (improved) upper bound for $\Delta_{j+1}(\cH_p)$ and $\Delta_{1}(\cH_p)$ translates into an improved upper tail estimate for $\Delta_{j}(\cH_p)$ and $w(\cH_p)$, respectively. 
We exemplify this with the following claim, which is usually stated for~$\cG=\cH_p$ only 
(the proof of is based on routine applications of Theorem~\ref{thm:C}, and thus deferred to Section~\ref{sec:prob}). 
We find inequalities~\eqref{eq:basic:e:2}--\eqref{eq:basic:deg:2} below remarkable, 
since they intuitively yield bounds for all subhypergraphs~$\cG \subseteq \cH_p$ \emph{without} taking a union bound. 
\begin{claim}\label{cl:basic:2}
Given $\cH$, assume that (P') holds. 
Then for all $t,x,y>0$ and $1 \le j < k$ we have 
\begin{align}
\label{eq:basic:e:2}
\Pr\bigl(w(\cG) \ge \mu+t \text{ and } \Delta_{1}(\cG) \le y \text{ for some } \cG \subseteq \cH_p\bigr) &\le 
\left(1+\frac{t}{\mu}\right)^{-t/(4Lky)}, \\ 
\label{eq:basic:deg:2}
\Pr\bigl(\Delta_{j}(\cG) \ge \mu_j + x \text{ and } \Delta_{j+1}(\cG) \le y \text{ for some } \cG \subseteq \cH_p\bigr) &\le N^{j} \left(1+\frac{x}{\mu_j}\right)^{-x/(4ky)}. 
\end{align}
\end{claim}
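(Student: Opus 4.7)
My plan is to derive both estimates from a single application of Theorem~\ref{thm:C} to a suitably chosen family $(Y_\alpha)_{\alpha\in\cI}$, using the observation that the degree hypotheses $\Delta_1(\cG)\le y$ and $\Delta_{j+1}(\cG)\le y$ translate cleanly into the dependency-sum constraint $\max_{\beta \in \cJ}\sum_{\alpha \in \cJ:\alpha\sim\beta}Y_\alpha \le C$ appearing in the definition of $Z_C$. Since $Z_C$ is already defined as a supremum over all qualifying sets~$\cJ$, a single tail bound on $Z_C$ simultaneously controls every admissible subhypergraph~$\cG \subseteq \cH_p$; this is what lets us avoid a naive union bound over the exponentially many $\cG$.

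For~\eqref{eq:basic:e:2} I would take $\cI = \cH$, $Y_f = w_f\indic{f \in \cH_p}$, and the dependency relation $f \sim g$ iff $f \cap g \neq \emptyset$ (cf.\ Remark~\ref{rem:C}). Then $\sum_{f}\E Y_f = \mu$. Given any $\cG \subseteq \cH_p$ with $\Delta_1(\cG) \le y$, choose $\cJ = \cG$; one has $\sum_{f \in \cJ}Y_f = w(\cG)$, while for every $f \in \cJ$
\[
\sum_{g \in \cJ:\, g \sim f} Y_g \;\le\; L \sum_{v \in f}\bigl|\Gamma_{\{v\}}(\cG)\bigr| \;\le\; Lky .
\]
Hence the event in~\eqref{eq:basic:e:2} forces $Z_{Lky} \ge \mu+t$, and Theorem~\ref{thm:C} with $C=Lky$ delivers the claimed estimate $(1+t/\mu)^{-t/(4Lky)}$.

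For~\eqref{eq:basic:deg:2} I would first union-bound over the at most $N^j$ sets $U \subseteq V(\cH)$ with $|U|=j$. For fixed $U$ I would then condition on $U \subseteq V_p(\cH)$, since otherwise $\Gamma_U(\cH_p)=\emptyset$ and the event is vacuous. With $\cI = \Gamma_U(\cH)$, $Y_f = \indic{f \in \cH_p}$, and $f \sim g$ iff $(f \cap g) \setminus U \neq \emptyset$, conditionally on $U \subseteq V_p(\cH)$ each $Y_f$ is a function of the indicators $\{\indic{v \in V_p(\cH)}:v \in f \setminus U\}$, so the independence hypothesis of Theorem~\ref{thm:C} is met; moreover $\sum_{f \in \cI}\E(Y_f\mid U\subseteq V_p(\cH)) = \sum_{f \in \Gamma_U(\cH)}p^{|f|-|U|} \le \mu_j$ by~\eqref{def:muj:basic}. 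Given $\cG \subseteq \cH_p$ witnessing the event, I would take $\cJ = \Gamma_U(\cG)$; then $\sum_{f \in \cJ}Y_f = |\Gamma_U(\cG)|$ and
\[
\sum_{g \in \cJ:\, g \sim f} Y_g \;\le\; \sum_{v \in f\setminus U}\bigl|\Gamma_{U \cup \{v\}}(\cG)\bigr| \;\le\; ky ,
\]
so Theorem~\ref{thm:C} with $C=ky$ yields conditional probability at most $(1+x/\mu_j)^{-x/(4ky)}$, and the union bound over $U$ produces the prefactor $N^j$.

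The only real content of the argument is the translation between the combinatorial degree bounds on $\cG$ and the bounded-dependency parameter $C$ in $Z_C$; after that everything is bookkeeping. I do not anticipate a serious obstacle: the factor $L$ in~\eqref{eq:basic:e:2} comes from $w_f\le L$, the factor $k$ in both inequalities from $|f|\le k$, and the only unavoidable union bound is the one over $j$-vertex subsets in~\eqref{eq:basic:deg:2}, which is precisely the source of the prefactor $N^j$.
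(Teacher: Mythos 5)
Your proposal is correct and matches the paper's (implicit) argument: the paper states that Claim~\ref{cl:basic:2} is proved by the same method as Theorem~\ref{thm:prob}, and your two applications of Theorem~\ref{thm:C} — with $\cI=\cH$, $Y_f=w_f\indic{f\in\cH_p}$, $C=Lky$ for~\eqref{eq:basic:e:2}, and with $\cI=\Gamma_U(\cH)$, a union bound over the $\le N^j$ choices of $U$, conditioning on $(\xi_\sigma)_{\sigma\in U}$ and $C=ky$ for~\eqref{eq:basic:deg:2} — are exactly the steps used there, with the last form of inequality~\eqref{eq:C} giving the stated $(1+t/\mu)^{-t/(4C)}$-type bounds.
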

\noindent 
Now, by a straightforward iterative degree argument similar to~\eqref{eq:heur:eHp:1}, we obtain the simple estimate 
\begin{equation}\label{eq:iteration:2}
\begin{split}
& \Pr\bigl(w(\cG) \ge \mu+t \text{ and } \Delta_{q}(\cG) \le R_q \text{ for some } \cG \subseteq \cH_p\bigr) \\ 
& \qquad \le \Pr\bigl(w(\cG) \ge \mu+t \text{ and } \Delta_{1}(\cG) \le R_1 \text{ for some } \cG \subseteq \cH_p\bigr) \\
& \qquad \quad + \sum_{1 \le j < q}\Pr\bigl(\Delta_{j}(\cG) > R_j \text{ and }  \Delta_{j+1}(\cG) \le R_{j+1} \text{ for some } \cG \subseteq \cH_p\bigr) .
\end{split}
\end{equation}
Restricting to the special case~$w(\cH_p)$, 
using Claim~\ref{cl:basic:2} it turns out that inequality~\eqref{eq:iteration:2} is essentially equivalent to the basic induction of Janson and Ruci{\'n}ski~\cite{DL} (see the proof of Theorem~3.10 in~\cite{DL}), which in turn qualitatively recovers the upper tail part of Kim and Vu~\cite{KimVu2000} (see Section~5 of~\cite{DL,DLP}). 
The iterative point of view~\eqref{eq:iteration:2} is somewhat more flexible than induction, making the arguments subjectively easier to modify (as there is no need to formulate a suitable induction hypothesis). 
Estimates for all subhypergraphs $\cG \subseteq \cH_p$ also make room for additional combinatorial arguments, which is crucial for the purposes of this paper.

\subsection{Combinatorial sparsification: degree reduction by deletion}\label{sec:spars} 
In this section we introduce our key combinatorial arguments, which eventually allow us to obtain improved upper tail estimates by `sparsifying' $\cH_p$, i.e., deleting edges from $\cH_p$. 
Loosely speaking, via this \emph{sparsification idea} we can effectively ignore certain `exceptional' edges from $\cH_p$ (which contain vertices with extremely high degree, say). 
For the purpose of this paper, we encapsulate this heuristic idea with the definition below. 
In intuitive words, for $\ell=1$ the `sparsification' event $\cE_{j,1}(x,r,y,z)$ essentially ensures that every $\cG \subseteq \cH_p$ with bounded $\Delta_{j+1}(\cG)$ and $\Delta_{1}(\cG)$ contains a large subhypergraph $\cJ \subseteq \cG$ with small $\Delta_{j}(\cJ)$. 
\begin{definition}[Sparsification event]%
Let $\cE_{j,\ell}(x,r,y,z)$ denote the event that for every $\cG \subseteq \cH_p$ with $\Delta_{j+1}(\cG) \le y$ and $\Delta_{\ell}(\cG) \le z$ there is $\cJ \subseteq \cG$ with $\Delta_{j}(\cJ) \le x$ and $e(\cG \setminus \cJ) \le r$.
\end{definition} 
\noindent 
Here one conceptual difference to the `deletion lemma' of R{\"o}dl and Ruci{\'n}ski~\cite{RR1994,UT} 
is that our focus is on `local properties' such as degrees (somewhat in the spirit of~\cite{SSW}), and not on `global properties' such as subgraph counts. 
Furthermore, we are deleting edges from $\cH_p=\cH[V_p(\cH)]$, whereas the classical approach corresponds to deleting vertices from~$V_p(\cH)=E(G_{n,p})$, say.

With $\cE_{j,1}(x,r,y,z)$ in hand, we now refine\footnote{Note that by setting $D_j=R_j=S_j$ the indicators in~\eqref{eq:thm:sparse:pj2}--\eqref{eq:thm:sparse:pj3} are zero, so~\eqref{eq:thm:sparse:iter} qualitatively reduces to~\eqref{eq:iteration:2}.} the basic estimate~\eqref{eq:iteration:2} via the strategy outlined in Section~\ref{sec:sketch} (see also~\eqref{eq:heur:eHp:2} therein). 
We believe that the ideas used in the proof of Theorem~\ref{thm:sparse:iter} below are more important than its concrete statement (which is optimized for the purposes of this paper). 
Here one new ingredient is the edge deletion of the sparsification events in $\Pr_{j,3,\ell}$ of~\eqref{eq:thm:sparse:pj3}, which allows us to decrease certain maximum degrees. 
The total weight of the deleted edges can be as large as 
$t/2$, which is the reason why in~\eqref{eq:thm:sparse:iter} we need to relax $w(\cG) \ge \mu+t$ to $w(\cG) \ge \mu+t/2$. 
In later applications we shall use $S_j \approx R_j/s$ with $s=\omega(1)$, 
and then the parametrization $Q_j = \max\{S_j,D_j\}$ allows us to 
easily deal with $S_j = o(1)$ border cases. 
The indicators in~\eqref{eq:thm:sparse:pj2}--\eqref{eq:thm:sparse:pj3} can safely be ignored on first reading 
(they mainly facilitate certain technical estimates). 
A key aspect of~\eqref{eq:thm:sparse:iter} is that we intuitively replace $\Delta_{1}(\cG) \le R_1$ of~\eqref{eq:iteration:2} with $\Delta_{1}(\cG) \le \min\{Q_1,R_1\}$, 
which by the discussion of Section~\ref{sec:iterative} is crucial for obtaining improved tail estimates (see also Theorem~\ref{thm:prob}). 
\begin{theorem}[Combinatorial decomposition of the upper tail]%
\label{thm:sparse:iter}
Given $\cH$ with $1 \le q \le k$, assume that (P') holds. 
Suppose that $t>0$. 
Given positive $(D_j)_{1 \le j \le q}$, $(R_j)_{1 \le j < q}$ and $(S_j)_{1 \le j < q}$, define $R_q=Q_q=D_q$ and $Q_j = \max\{S_j,D_j\}$ for $1 \le j < q$.  
Then we have  
\begin{equation}\label{eq:thm:sparse:iter}
\begin{split}
& \Pr\bigl(w(\cG) \ge \mu+t \text{ and } \Delta_{q}(\cG) \le D_q \text{ for some } \cG \subseteq \cH_p\bigr) \\
& \qquad \le \Pr\bigl(w(\cG) \ge \mu+t/2 \text{ and } \Delta_{1}(\cG) \le \min\{Q_1,R_1\} \text{ for some } \cG \subseteq \cH_p\bigr) \\
& \qquad \quad + \sum_{1 \le j < q} \bigl[\Pr_{j,1} + \Pr_{j,2} +  \Pr_{j,3,1}\bigr] ,
\end{split}
\end{equation}
where
\begin{align}
\label{eq:thm:sparse:pj1}
\Pr_{j,1} & = \Pr\bigl(\Delta_{j}(\cG) > R_j \text{ and } \Delta_{j+1}(\cG) \le R_{j+1} \text{ for some } \cG \subseteq \cH_p\bigr), \\
\label{eq:thm:sparse:pj2}
\Pr_{j,2} & = \indic{Q_j<R_j \text{ and } Q_{j+1}>D_{j+1}}\Pr\bigl(\Delta_{j}(\cG) > Q_j \text{ and } \Delta_{j+1}(\cG) \le S_{j+1} \text{ for some } \cG \subseteq \cH_p \bigr), \\
\label{eq:thm:sparse:pj3}
\Pr_{j,3,\ell} & = \indic{Q_j<R_j \text{ and } Q_{j+1}=D_{j+1}}\Pr\bigl(\neg\cE_{j,\ell}(Q_{j},t/(2Lq),D_{j+1},R_\ell)\bigr) .
\end{align}
\end{theorem}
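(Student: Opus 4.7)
The plan is to prove~\eqref{eq:thm:sparse:iter} via a deterministic implication followed by a union bound. Concretely, I would show that on the event where none of the ``bad'' events defining $\Pr_{j,1}$, $\Pr_{j,2}$, $\Pr_{j,3,1}$ occurs (for any $1 \le j < q$), the existence of some $\cG_0 \subseteq \cH_p$ with $w(\cG_0) \ge \mu+t$ and $\Delta_q(\cG_0) \le D_q$ forces the existence of some $\cJ \subseteq \cH_p$ with $w(\cJ) \ge \mu+t/2$ and $\Delta_1(\cJ) \le \min\{Q_1, R_1\}$, which is the event underlying the main probability on the right-hand side. A union bound over the failures then gives~\eqref{eq:thm:sparse:iter}.

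The core of the construction is a nested sparsification chain $\cG_0 = \cJ_q \supseteq \cJ_{q-1} \supseteq \cdots \supseteq \cJ_1$ designed to preserve the invariant $\Delta_i(\cJ_j) \le \min\{Q_i, R_i\}$ for all $q \ge i \ge j$ (with the convention $Q_q = R_q = D_q$). The base case $\cJ_q = \cG_0$ satisfies $\Delta_q(\cJ_q) \le D_q = R_q$; since absence of the bad event behind $\Pr_{j,1}$ says that $\Delta_{j+1}(\cG) \le R_{j+1}$ implies $\Delta_j(\cG) \le R_j$ for \emph{every} $\cG \subseteq \cH_p$, a descending induction yields $\Delta_i(\cJ_q) \le R_i$ for all $i \le q$. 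Re-applying this same observation to each later $\cJ_j$ secures the auxiliary bound $\Delta_1(\cJ_j) \le R_1$ that the sparsification step further down will need.

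For the transition $\cJ_{j+1} \rightsquigarrow \cJ_j$ with $1 \le j < q$, I would split into three subcases matched to the indicators in~\eqref{eq:thm:sparse:pj2}--\eqref{eq:thm:sparse:pj3}. If $Q_j \ge R_j$, set $\cJ_j := \cJ_{j+1}$; the invariant already gives $\Delta_j(\cJ_{j+1}) \le R_j = \min\{Q_j, R_j\}$. If $Q_j < R_j$ and $Q_{j+1} > D_{j+1}$ (so $Q_{j+1} = S_{j+1}$), the invariant gives $\Delta_{j+1}(\cJ_{j+1}) \le S_{j+1}$, and absence of the bad event behind $\Pr_{j,2}$ forces $\Delta_j(\cJ_{j+1}) \le Q_j$, so again $\cJ_j := \cJ_{j+1}$ works. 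If $Q_j < R_j$ and $Q_{j+1} = D_{j+1}$, then by hypothesis the event $\cE_{j,1}(Q_j, t/(2Lq), D_{j+1}, R_1)$ holds; its preconditions $\Delta_{j+1}(\cJ_{j+1}) \le D_{j+1}$ and $\Delta_1(\cJ_{j+1}) \le R_1$ are both supplied by the invariant, yielding $\cJ_j \subseteq \cJ_{j+1}$ with $\Delta_j(\cJ_j) \le Q_j$ and $e(\cJ_{j+1} \setminus \cJ_j) \le t/(2Lq)$. In every subcase, degrees only shrink as edges are deleted, so the invariant for $i > j$ is inherited from $\cJ_{j+1}$ while the invariant for $i = j$ has just been established.

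At the end of the chain, $\cJ := \cJ_1$ satisfies $\Delta_1(\cJ) \le \min\{Q_1, R_1\}$, while each of the at most $q-1$ sparsification steps removes edges of total weight at most $L \cdot t/(2Lq) = t/(2q)$, so $w(\cJ) \ge w(\cG_0) - (q-1)t/(2q) > \mu + t/2$, as required. The main obstacle is not a hard calculation but the combinatorial bookkeeping: designing the invariant and the three-way case split so that the indicator conditions in~\eqref{eq:thm:sparse:pj2}--\eqref{eq:thm:sparse:pj3} exactly partition the ``degree-reducing'' regime $Q_j < R_j$ into the complementary sub-regimes $Q_{j+1} > D_{j+1}$ and $Q_{j+1} = D_{j+1}$, and so that each branch of the update preserves the invariant. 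Once this is in place, \eqref{eq:thm:sparse:iter} follows from a union bound over the finitely many good events listed in the statement.
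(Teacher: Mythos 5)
Your proposal is correct and matches the paper's own proof in all essential respects: the same nested sparsification chain $\cG_0 = \cJ_q \supseteq \cdots \supseteq \cJ_1$, the same three-way case split keyed to $Q_j \ge R_j$, $Q_j < R_j$ with $Q_{j+1}>D_{j+1}$, and $Q_j < R_j$ with $Q_{j+1}=D_{j+1}$, and the same weight-accounting via $L\cdot t/(2Lq)=t/(2q)$ per step. The only cosmetic difference is that the paper phrases the deterministic step as the contrapositive (assuming also the negation of the main event, it derives $w(\cG)<\mu+t$), whereas you run the implication forward; these are logically equivalent, and your invariant, though stated a bit more tersely (tracking only the $i\ge j$ bounds plus the $\Delta_1(\cJ_j)\le R_1$ auxiliary fact), covers exactly what each transition step needs.
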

\noindent 
The combinatorial proof proceeds in two sparsification rounds. 
In the first round we use our usual iterative degree argument to deduce that $\Delta_q(\cG) \le R_q$ implies $\Delta_j(\cG) \le R_j$ for all $1 \le j \le q$. 
We start the second round with the sparsification event, by deleting edges such that $\cJ \subseteq \cG$ satisfies $\Delta_{q-1}(\cJ) \le Q_{q-1}$ (tacitly assuming $Q_{q-1}< R_{q-1}$, say). 
The idea is that our usual iterative degree argument should then allow us to deduce that $\Delta_{j+1}(\cJ) \le Q_{j+1}$ implies $\Delta_j(\cJ) \le Q_j$ for all $1 \le j < q-1$. 
Unfortunately, our later probabilistic estimates break down if the parameter $Q_{j+1}$ is `too small'. 
With foresight we thus use our alternative `degree reduction' argument whenever $Q_{j+1}=D_{j+1}$ holds, i.e., we again delete edges. 
\begin{proof}[Proof of Theorem~\ref{thm:sparse:iter}]
Inequality~\eqref{eq:thm:sparse:iter} is trivial for~$q=1$ (since~$R_1=Q_1=D_1$). 
For~$q \ge 2$ the plan is to show that properties~(a)--(d) below \emph{deterministically} imply that~$w(\cG) < \mu+t$ for every~$\cG \subseteq \cH_p$ with~$\Delta_{q}(\cG) \le D_q$. 
Using a union bound argument this then completes the proof 
(it is routine to check that~(a)--(d) correspond to the complements of the events on the right hand side of~\eqref{eq:thm:sparse:iter}, 
since $Q_{j+1}>D_{j+1}$ implies $S_{j+1}=Q_{j+1}$). 
Turning to the details, we henceforth assume that the following properties hold for all~$\cG \subseteq \cH_p$ and~$1 \le j < q$:%
{\vspace{-0.5em}\begin{enumerate}
\itemindent 0.75em \itemsep 0.125em \parskip 0em  \partopsep=0pt \parsep 0em 
 \item[(a)] $\Delta_{1}(\cG) \le \min\{Q_1,R_1\}$ implies $w(\cG) < \mu+t/2$, 
 \item[(b)] $\Delta_{j+1}(\cG) \le R_{j+1}$ implies $\Delta_{j}(\cG) \le R_{j}$, 
 \item[(c)] if $Q_j<R_j$ and $Q_{j+1}>D_{j+1}$, then $\Delta_{j+1}(\cG) \le Q_{j+1}$ implies $\Delta_j(\cG) \le Q_j$, and 
 \item[(d)] if $Q_j < R_j$ and $Q_{j+1}=D_{j+1}$, then $\Delta_{j+1}(\cG) \le Q_{j+1}$ and $\Delta_{1}(\cG) \le R_1$ implies existence of $\cJ \subseteq \cG$ with $\Delta_{j}(\cJ) \le Q_{j}$ and $e(\cG \setminus \cJ) \le t/(2Lq)$.%
 \end{enumerate}\vspace{-0.5em}\noindent}%
For the remaining deterministic argument 
we fix $\cG \subseteq \cH_p$ with $\Delta_{q}(\cG) \le D_q$, 
and claim that we can construct a hypergraph sequence $\cG=\cJ_q \supseteq \cdots \supseteq \cJ_{1}$ such that 
\begin{align}
\label{eq:seq:degij}
\Delta_{i}(\cJ_j) & \le 
 \begin{cases}
		R_i, & ~~\text{if $1 \le i < j$}, \\
		\min\{Q_i,R_i\}, & ~~\text{if $j \le i \le q$}, 
	\end{cases}\\
\label{eq:seq:edgesij}
e(\cJ_{j+1} \setminus \cJ_{j}) & \le t/(2Lq) .
\end{align}
With this sequence in hand, using~\eqref{eq:seq:edgesij} we have 
\[
w(\cJ_{j+1} \setminus \cJ_{j}) = \sum_{f \in \cJ_{j+1} \setminus \cJ_{j}} w_f \le \bigl(\max_{f \in \cJ_{j+1} \setminus \cJ_{j}} w_f\bigr) \cdot e(\cJ_{j+1} \setminus \cJ_{j}) \le L \cdot  t/(2Lq) = t/(2q) ,
\]
which together with~$\Delta_{1}(\cJ_1) \le \min\{Q_1,R_1\}$ of~\eqref{eq:seq:degij} and~(a) then yields 
\begin{equation}\label{eq:thm:sparse:iter:removed}
w(\cG) = w(\cJ_1) + \sum_{1 \le j < q} w(\cJ_{j+1} \setminus \cJ_{j}) < (\mu + t/2) + (q-1) \cdot t/(2q) \le \mu + t .
\end{equation}
It thus remains to construct $\cG=\cJ_q \supseteq \cdots \supseteq \cJ_{1}$ with the claimed properties. 
For the base case~$\cG=\cJ_q$, using $\Delta_{q}(\cJ_q) = \Delta_{q}(\cG)\le D_q=R_q$ repeated applications of~(b) yield that $\Delta_{i}(\cJ_q) \le R_{i}$ for all $1 \le i \le q$, so~\eqref{eq:seq:degij} holds since $\Delta_{q}(\cJ_q) \le R_q = \min\{R_q, Q_q\}$. 
Given $\cJ_{j+1}$ with $1 \le j < q$, our construction of $\cJ_j \subseteq \cJ_{j+1}$ distinguishes several cases; 
in view of $\Delta_{i}(\cJ_j) \le \Delta_{i}(\cJ_{j+1})$ 
it clearly suffices to check~\eqref{eq:seq:degij} for~$\Delta_{j}(\cJ_j)$ only.  

If~$Q_{j}\ge R_j$, then we set $\cJ_{j}=\cJ_{j+1}$, which satisfies $\Delta_{j}(\cJ_{j}) = \Delta_{j}(\cJ_{j+1}) \le R_j = \min\{Q_j,R_j\}$ by~\eqref{eq:seq:degij}.

If~$Q_{j}< R_j$ and $Q_{j+1}>D_{j+1}$, then we set $\cJ_{j}=\cJ_{j+1}$, which by~\eqref{eq:seq:degij} satisfies $\Delta_{j+1}(\cJ_{j}) =\Delta_{j+1}(\cJ_{j+1}) \le Q_{j+1}$. Hence~(c) implies $\Delta_{j}(\cJ_{j}) \le Q_j= \min\{Q_j,R_j\}$. 

Finally, if~$Q_{j}< R_j$ and $Q_{j+1}=D_{j+1}$, then by~\eqref{eq:seq:degij} we have $\Delta_{j+1}(\cJ_{j+1}) \le Q_{j+1}$ and $\Delta_{1}(\cJ_{j+1}) \le R_1$. 
Hence~(d) implies existence of $\cJ_{j} \subseteq \cJ_{j+1}$ satisfying $\Delta_{j}(\cJ_{j}) \le Q_j = \min\{Q_j,R_j\}$ and $e(\cJ_{j+1} \setminus \cJ_{j}) \le t/(2Lq)$, completing the proof. 
\end{proof}
The above proof demonstrates that estimates for all subhypergraphs $\cG \subseteq \cH_p$ 
are extremely powerful along with combinatorial arguments. 
It seems likely that the above sparsification approach can be sharpened in specific 
applications, i.e., that there is room for alternative (ad-hoc) arguments which apply the `degree reduction' idea differently. 
For example, in~\cite{AP} the degrees are iteratively reduced by a factor of two, say (replacing the finite sum in~\eqref{eq:thm:sparse:iter:removed} by a convergent geometric series). 
In~\cite{star} the iterative argument also takes `trivial' upper bounds for the~$\Delta_j(\cH)$ into account (which can be smaller than~$R_j$ or~$Q_j$).

\subsubsection{A combinatorial local deletion argument}\label{sec:DL} 
The goal of this subsection is to estimate $\Pr\bigl(\neg \cE_{j,1}(x,r,y,z)\bigr)$, i.e., the probability that our `sparsification' event fails. 
As indicated in Section~\ref{sec:sketch}, our proof uses a \emph{maximal matching based idea} which relies on combinatorial arguments and the BK-inequality. 
The following auxiliary event~$\cD_{U,x,y}$ intuitively states that, in~$\cH_p$, the vertex set~$U$ is the centre of a `star' with at least~$x$ spikes (satisfying 
some degree constraint). 
\begin{definition}[Auxiliary degree event]%
Let $\cD_{U,x,y}$ denote the event that there is $\cK \subseteq \Gamma_{U}(\cH_p)$ with $|\cK| \ge x$ and $\Delta_{|U|+1}(\cK) \le y$. 
\end{definition} 
\noindent 
To put this definition into our `all subhypergraphs' context, note that~$\neg \cD_{U,x,y}$ implies $|\Gamma_{U}(\cG)| < x$ for all~$\cG \subseteq \cH_p$ with~$\Delta_{|U|+1}(\cG) \le y$. 
It might also be instructive to note that a union bound argument yields  
\begin{equation}\label{eq:deg:UB}
\Pr\bigl(\Delta_{j}(\cG) \ge x \text{ and } \Delta_{j+1}(\cG) \le y \text{ for some } \cG \subseteq \cH_p\bigr) \le \sum_{U \subseteq V(\cH): |U|=j}\Pr(\cD_{U,x,y}) .
\end{equation}

The next result relates the auxiliary event~$\cD_{U,x,y}$ 
with the sparsification event~$\cE_{j,1}(x,r,y,z)$. 
For example, $\sum_U \Pr(\cD_{U,x,y}) \le B^{-x/y}$ 
translates into $\Pr(\neg \cE_{j,1}(x,r,y,z)) \le B^{-r/(kyz)}$ 
by inequality~\eqref{eq:lem:sparse:event}. 
\begin{lemma}[Auxiliary result for the sparsification event]%
\label{lem:sparse:event}
Given $\cH$, assume that $\max_{f \in \cH} |f| \le k$ holds. 
Then for all $x,r,y,z>0$ and $1 \le j < k$ we have 
\begin{equation}\label{eq:lem:sparse:event}
\Pr\bigl(\neg \cE_{j,1}(x,r,y,z)\bigr) \le \biggl(\sum_{U \subseteq V(\cH): |U|=j}\Pr(\cD_{U,x,y})\biggr)^{\ceilL{r/(k\ceil{x}z)}} . 
\end{equation}
\end{lemma}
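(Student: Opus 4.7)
The plan is to execute the ``maximal matching'' sparsification idea sketched in Section~\ref{sec:sketch} and convert it into a BK-inequality estimate via Theorem~\ref{thm:BKR:ind}. First, I would assume that $\cE_{j,1}(x,r,y,z)$ fails, fix any witness $\cG \subseteq \cH_p$ with $\Delta_{j+1}(\cG) \le y$ and $\Delta_{1}(\cG) \le z$, and then greedily choose a maximal pairwise vertex-disjoint collection $\cM = \{\cK_{U_1}, \ldots, \cK_{U_m}\}$ of ``$s$-stars'' in $\cG$, where $s=\ceil{x}$ and an $s$-star centred at $U$ (with $|U|=j$) is any subset of $\Gamma_U(\cG)$ of cardinality exactly~$s$.

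Defining $\cJ \subseteq \cG$ as the subhypergraph obtained by removing every edge of $\cG$ that shares a vertex with some $\cK_{U_i} \in \cM$, maximality of $\cM$ forces $\Delta_j(\cJ) < \ceil{x}$ (else a fresh $s$-star, vertex-disjoint from $\cM$, could be added), and hence $\Delta_j(\cJ) \le x$. Since each $\cK_{U_i}$ spans at most $sk$ vertices and each vertex of $\cG$ lies in at most $\Delta_1(\cG) \le z$ edges, I would bound the deleted edges by $e(\cG \setminus \cJ) \le |\cM| \cdot s \cdot k \cdot z$. Failure of $\cE_{j,1}(x,r,y,z)$ on $\cG$ then forces $e(\cG \setminus \cJ) > r$ and so $|\cM| \ge m := \ceilL{r/(k\ceil{x}z)}$, the exponent appearing in~\eqref{eq:lem:sparse:event}.

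Next, I would tie each $\cK_{U_i}$ to the event $\cD_{U_i,x,y}$: since $\cK_{U_i} \subseteq \Gamma_{U_i}(\cH_p)$, $|\cK_{U_i}| = s \ge x$, and $\Delta_{j+1}(\cK_{U_i}) \le \Delta_{j+1}(\cG) \le y$, each $\cK_{U_i}$ certifies $\cD_{U_i,x,y}$. Letting $I_i$ denote the vertex set of $\cK_{U_i}$, the $I_i$ are pairwise disjoint by construction, and once $\omega_v = 1$ for every $v \in I_i$ (i.e.\ $I_i \subseteq V_p(\cH)$) we already have $\cK_{U_i} \subseteq \cH_p$, so $\cD_{U_i,x,y}$ occurs regardless of the remaining vertex indicators; that is, $\restr{\cD_{U_i,x,y}}{I_i}$ holds, and therefore $\boxdot_{i \in [m]} \cD_{U_i,x,y}$ occurs in the sense of~\eqref{eq:square}.

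Summing over all $m$-tuples $(U_1,\ldots,U_m)$ of $j$-subsets of $V(\cH)$ (not required to be distinct) and invoking Theorem~\ref{thm:BKR:ind}, I would conclude
\[
\Pr\bigl(\neg\cE_{j,1}(x,r,y,z)\bigr) \le \sum_{U_1,\ldots,U_m} \Pr\bigl(\boxdot_{i \in [m]} \cD_{U_i,x,y}\bigr) \le \sum_{U_1,\ldots,U_m} \prod_{i=1}^m \Pr(\cD_{U_i,x,y}) = \Bigl(\sum_{U} \Pr(\cD_{U,x,y})\Bigr)^m,
\]
which is~\eqref{eq:lem:sparse:event}. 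The combinatorial book-keeping (the degree drop $\Delta_j(\cJ) < s$ and the edge-deletion count) is routine; the main subtlety I expect is the BK step, where I must verify carefully that vertex-disjoint stars translate into \emph{disjoint} coordinate sets for the product measure underlying $V_p(\cH)$, and that these coordinates alone deterministically witness the events $\cD_{U_i,x,y}$ so that $\restr{\cD_{U_i,x,y}}{I_i}$ is genuinely triggered.
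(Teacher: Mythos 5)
Your proposal is correct and mirrors the paper's proof of Lemma~\ref{lem:sparse:event}: both arguments build a maximal vertex-disjoint collection of $\ceil{x}$-stars to sparsify $\cG$, verify $\Delta_j(\cJ)<x$ and the $|\cM|\cdot\ceil{x}kz$ edge-deletion bound, and then convert the size-$\ceil{r/(k\ceil{x}z)}$ matching into a disjoint occurrence of the events $\cD_{U,x,y}$ so that the BK-inequality can be applied. The only cosmetic difference is that you finish with a direct union bound over ordered $m$-tuples, whereas the paper runs a first-moment (Markov) argument on the counting variable $Z_R$ from~\eqref{def:ZR}; the latter is what yields the extra $1/\ceilL{r/(k\ceil{x}z)}!$ factor recorded in Remark~\ref{rem:sparse:event}, which your union bound does not recover but which is not needed for the lemma as stated.
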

\begin{remark}\label{rem:sparse:event}
Inequality~\eqref{eq:lem:sparse:event} remains valid after dividing the right hand side by $\ceilL{r/(k\ceil{x}z)}!$. 
\end{remark}
\noindent 
The proof of Lemma~\ref{lem:sparse:event} develops a combinatorial idea from~\cite{AP}, which in turn was partially inspired by~\cite{Spencer1990,UT}. 
We call $(U,\cK_U)$ an \emph{$(j,x,y)$-star in~$\cG$} if $U \subseteq V(\cG)$ and $\cK_U \subseteq \Gamma_{U}(\cG)=\{f \in \cG : U \subseteq f\}$ satisfy $|U|=j$, $|\cK_U|=\ceil{x}$ and $\Delta_{j+1}(\cK_U) \le y$. 
Note that we allow for overlaps of the edges $f,g \in \cK_U$ outside of the `centre'~$U$. 
Writing $\cS_{j,x,y}(\cG)$ for the collection of all $(j,x,y)$-stars in~$\cG$, we define $M_{j,x,y}(\cG)$ as the size of the largest 
$\cM \subseteq \cS_{j,x,y}(\cG)$ satisfying $V(\cK_U) \cap V(\cK_W) = \emptyset$ for all distinct $(U,\cK_U),(W,\cK_W) \in \cM$.  
In intuitive words, $M_{j,x,y}(\cG)$ denotes the size of the `largest $(j,x,y)$-star matching' in~$\cG$, i.e., vertex-disjoint collection of stars. 
We are now ready to follow the strategy sketched in Section~\ref{sec:sketch} (see also~\eqref{eq:heur:removal} therein). 
\begin{proof}[Proof of Lemma~\ref{lem:sparse:event}]
Let $\tilde{r}=r/(k\ceil{x}z)$ and $R=\ceil{\tilde{r}}$. We first assume that $M_{j,x,y}(\cH_p) \le \tilde{r}$ holds, and claim that this implies the occurrence of $\cE_{j,1}(x,r,y,z)$. 
For any $\cG \subseteq \cH_p$ with $\Delta_{j+1}(\cG) \le y$ and $\Delta_{1}(\cG) \le z$, it clearly suffices to show that there is $\cJ \subseteq \cG$ with $\Delta_{j}(\cJ) \le x$ and $e(\cG \setminus \cJ) \le r$. 
Let $\cM \subseteq \cS_{j,x,y}(\cG)$ attain the maximum in the definition of $M_{j,x,y}(\cG)$. 
We then remove all edges~$f \in \cG$ which overlap some star $(U,\cK_U) \in \cM$, where overlap means that $f \cap g \neq \emptyset$ for some edge $g \in \cK_U$. 
We denote the resulting subhypergraph by $\cJ \subseteq \cG$. 
Using $\Delta_{j+1}(\cJ) \le \Delta_{j+1}(\cG) \le y$ and maximality of $\cM$, we then infer $\Delta_{j}(\cJ) \le \ceil{x}-1 < x$ (because otherwise we could add another $(j,x,y)$-star to $\cM$). 
Furthermore, since $|\cM| = M_{j,x,y}(\cG) \le M_{j,x,y}(\cH_p) \le \tilde{r}$ and $\Delta_{1}(\cG) \le z$, by construction the number of deleted edges is at most 
\begin{equation}\label{eq:removalG}
e(\cG \setminus \cJ) \le \sum_{K_U \in \cM}\sum_{f \in \cK_U}\sum_{v \in f}|\Gamma_{\{v\}}(\cG)| \le |\cM|  \cdot \ceil{x} \cdot \bigl(\max_{f \in \cG}|f|\bigr) \cdot \Delta_1(\cG) \le \tilde{r} \cdot \ceil{x} k z = r .
\end{equation}
It follows that $M_{j,x,y}(\cH_p) \le \tilde{r}$ implies $\cE_{j,1}(x,r,y,z)$, as claimed.

For~\eqref{eq:lem:sparse:event} it remains to estimate $\Pr(M_{j,x,y}(\cH_p) > \tilde{r})$. 
Similar to the proof of Theorem~11 in~\cite{AP}, we set 
\begin{equation}\label{def:ZR}
Z_R = \sum_{\substack{(U_1, \ldots, U_R):\\ U_i \subseteq V(\cH) \text{ and } |U_i|=j}} \indicb{\boxdot_{i \in [R]}\cD_{U_i,x,y}} ,
\end{equation}
where $\boxdot$ is defined as in~\eqref{eq:square}. 
If $M_{j,x,y}(\cH_p) > \tilde{r}$, then there is $\cM \subseteq \cS_{j,x,y}(\cH_p)$ of size $|\cM|= \ceil{\tilde{r}} = R$ which satisfies $V(\cK_U) \cap V(\cK_W) = \emptyset$ for all distinct $(U,\cK_U),(W,\cK_W) \in \cM$. 
So, since the disjoint vertex sets $V(\cK_U) \subseteq V_p(\cH)$ guarantee the occurrence of each event $\cD_{U,x,y}$, it follows that $\boxdot_{(U,\cK_U) \in \cM}\cD_{U,x,y}$ occurs. 
As $U \subseteq V(\cK_U)$ holds, by vertex disjointness of the $V(\cK_U)$ we deduce that the corresponding `star-centres'~$U$ are distinct. 
Since~$Z_R$ counts ordered $R$-tuples, we thus infer $Z_R \ge R!$. Hence, Markov's inequality yields  
\begin{equation}\label{eq:MZQ}
\Pr(M_{j,x,y}(\cH_p) > \tilde{r}) \le \Pr(Z_R \ge R!) \le (\E Z_R)/R! .
\end{equation}
Turning to~$\E Z_R$, using the BK-inequality~\eqref{eq:BKR:ind} we readily obtain
\begin{equation}\label{eq:EZQ}
\begin{split}
\E Z_R & = \sum_{\substack{(U_1, \ldots, U_R):\\ U_i \subseteq V(\cH) s\text{ and } |U_i|=j}} \Pr\bigl(\boxdot_{i \in [R]}\cD_{U_i,x,y}\bigr) \\
& \le \sum_{\substack{(U_1, \ldots, U_R):\\ U_i \subseteq V(\cH) \text{ and } |U_i|=j}} \prod_{i \in [R]}\Pr(\cD_{U_i,x,y}) \le \biggl(\sum_{U \subseteq V(\cH): |U|=j} \Pr(\cD_{U,x,y})\biggr)^R ,
\end{split}
\end{equation}
which together with~\eqref{eq:MZQ} and $R \ge 1$ completes the proof. 
\end{proof}
The `star-matching' based deletion argument used in the above proof seems of independent interest. 
In applications it might be easier to avoid $\cE_{j,1}(x,r,y,z)$, and directly work with 
the random variable $M_{j,x,y}(\cH_p)$, see also~\cite{AP,star}. 
The above estimates~\eqref{eq:MZQ}--\eqref{eq:EZQ} 
exploit the BK-inequality to relate $M_{j,x,y}(\cH_p)$ with the simpler events $\cD_{U,x,y}$. 
In $\cH_p$ and other probability spaces one can sometimes also estimate $\Pr(M_{j,x,y}(\cH_p) \ge z)$ more directly 
(see, e.g., the remark after the proof of Lemma~17 in~\cite{AP}, or the proof of Lemma~9 in~\cite{star}).

\subsection{Probabilistic estimates}\label{sec:prob} 
In this section we introduce our key probabilistic estimates, which complement the combinatorial decomposition of Theorem~\ref{thm:sparse:iter}, i.e., 
allow us to bound the right hand side of~\eqref{eq:thm:sparse:iter}.  
A key aspect of inequalities~\eqref{eq:thm:e}--\eqref{eq:thm:deg} is that improved degree constraints $\Delta_{i}(\cG) \le y$ translate into improved tail estimates. 
In our applications~\eqref{eq:thm:sparse} below often reduces to $\Pr\bigl(\neg \cE_{j,1}(x,r,y,z)\bigr) \le (e\mu_j/x)^{-\Theta(r/(y z))}$, say 
(see, e.g., the proof of Theorem~\ref{thm:extended}). 
\begin{theorem}[Probabilistic upper tail estimates]%
\label{thm:prob}
Given $\cH$, assume that (P') holds. 
Set $\varphi(x)=(1+x)\log(1+x)-x$.  
Then for all $x,r,y,z,t>0$ and $1 \le j < k$ we have 
\begin{align}
\label{eq:thm:e}
\Pr\bigl(w(\cG) \ge \mu+t/2 \text{ and } \Delta_{1}(\cG) \le y \text{ for some } \cG \subseteq \cH_p\bigr) & \le \exp\left(-\frac{\varphi(t/\mu)\mu}{4Lky} \right) , \\
\label{eq:thm:deg}
\Pr\bigl(\Delta_{j}(\cG) \ge x \text{ and } \Delta_{j+1}(\cG) \le y \text{ for some } \cG \subseteq \cH_p\bigr) & \le N^{j} \left(\frac{e\mu_j}{x}\right)^{x/(ky)} ,\\
\label{eq:thm:sparse}
\Pr\bigl(\neg \cE_{j,1}(x,r,y,z)\bigr) & \le \left(N^j \left(\frac{e\mu_j}{\ceil{x}}\right)^{\ceil{x}/(ky)}\right)^{\ceilL{r/(k\ceil{x}z)}}.
\end{align}
\end{theorem}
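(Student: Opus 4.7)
The plan is to deduce all three estimates from the Chernoff-type inequality (Theorem~\ref{thm:C}), with \eqref{eq:thm:sparse} also invoking Lemma~\ref{lem:sparse:event}. The key observation making the ``for some $\cG \subseteq \cH_p$'' quantifier tractable is that $Z_C$ in Theorem~\ref{thm:C} is itself defined as a maximum over index subsets $\cJ$, so a good choice of the variables and the dependency relation turns this quantifier into a free bonus rather than a source of union-bound losses.

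For \eqref{eq:thm:e}, I would apply Theorem~\ref{thm:C} to $(Y_f)_{f \in \cH}$ defined by $Y_f = w_f \indic{f \in \cH_p}$, with dependency relation $f \sim g$ iff $f \cap g \neq \emptyset$ (allowed by Remark~\ref{rem:C}, since $Y_f$ is a function of $(\indic{v \in V_p(\cH)})_{v \in f}$). One has $\sum_f \E Y_f = \mu$. For any $\cG \subseteq \cH_p$ with $\Delta_1(\cG) \le y$, taking $\cJ = \cG$ as an index set gives $\sum_{f \in \cJ} Y_f = w(\cG)$, while for each $g \in \cJ$ the at most $k$ vertices of $g$ each lie in at most $y$ edges of $\cG$, yielding $\sum_{f \in \cJ: f \sim g} Y_f \le L \cdot ky$. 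Hence the event in question is contained in $\{Z_{Lky} \ge \mu + t/2\}$, and Theorem~\ref{thm:C} yields $\exp(-\varphi(t/(2\mu))\mu/(Lky))$. The final form with $\varphi(t/\mu)/4$ follows from the elementary inequality $\varphi(2x) \le 4\varphi(x)$, verifiable by integrating $\varphi'(x) = \log(1+x)$ from $0$.

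For \eqref{eq:thm:deg}, I would first use the union bound \eqref{eq:deg:UB} to reduce to bounding $\Pr(\cD_{U,x,y})$ uniformly in $U$ with $|U|=j$, of which there are at most $N^j$. To bound a single $\Pr(\cD_{U,x,y})$ I would condition on the event $\{U \subseteq V_p(\cH)\}$, which is implied by $\cD_{U,x,y}$ whenever $x > 0$. Conditionally on this event, the indicators $(\indic{v \in V_p(\cH)})_{v \notin U}$ remain independent, and the variables $\widetilde Y_f = \indic{f \in \cH_p}$ for $f \in \Gamma_U(\cH)$ have conditional means $p^{|f|-j}$ summing to at most $\mu_j$ by \eqref{def:muj:basic:alt}; the relation $f \sim g$ iff $f \cap g \supsetneq U$ fits Remark~\ref{rem:C}. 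Any witness $\cK$ to $\cD_{U,x,y}$ satisfies, for each $g \in \cK$, $\sum_{f \in \cK: f \sim g} \widetilde Y_f \le |g \setminus U| \cdot \Delta_{j+1}(\cK) \le k y$, since every such $f$ lies in $\Gamma_{U \cup \{v\}}(\cK)$ for some $v \in g \setminus U$. Therefore $\cD_{U,x,y}$ implies $Z_{ky} \ge x$ (with $\mu$ replaced by $\mu_j$ in Theorem~\ref{thm:C}), and the second form in \eqref{eq:C} delivers $(e\mu_j/x)^{x/(ky)}$, which is $\ge 1$ trivially if $x \le \mu_j$.

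The estimate \eqref{eq:thm:sparse} is then a direct combination: since $|\cK|$ is an integer, $\cD_{U,x,y} = \cD_{U,\ceil{x},y}$, so the previous paragraph yields $\sum_{U: |U|=j}\Pr(\cD_{U,x,y}) \le N^j(e\mu_j/\ceil{x})^{\ceil{x}/(ky)}$, and Lemma~\ref{lem:sparse:event} finishes the job by raising to the power $\ceilL{r/(k\ceil{x}z)}$. The main obstacle I anticipate is purely cosmetic: threading the correct numerical constants through \eqref{eq:thm:e}, in particular justifying the factor of $4$ via $\varphi(2x) \le 4\varphi(x)$. The substantive content — handling the ``some $\cG \subseteq \cH_p$'' quantifier without a union bound over subhypergraphs — is entirely absorbed into the choice of $(Y_f, \sim, C)$ that makes $Z_C$ dominate the offending supremum.
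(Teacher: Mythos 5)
Your proposal is correct and follows essentially the same route as the paper's proof: Theorem~\ref{thm:C} applied to $Y_f = w_f\indic{f\in\cH_p}$ with the overlap relation for~\eqref{eq:thm:e}, and to $\indic{f\in\cH_p}$ for $f\in\Gamma_U(\cH)$ with the relation $(f\cap g)\setminus U\neq\emptyset$ after conditioning and a union bound over $U$ for~\eqref{eq:thm:deg}, then Lemma~\ref{lem:sparse:event} plus the integrality observation for~\eqref{eq:thm:sparse}. The only cosmetic differences are that the paper conditions on the variable values $(\xi_\sigma)_{\sigma\in U}$ rather than on the event $\{U\subseteq V_p(\cH)\}$ (which matters later for the general setup but is interchangeable here since $\cD_{U,x,y}$ forces $U\subseteq V_p(\cH)$), and a minor slip in your wording: the trivial regime for~\eqref{eq:thm:deg} is $x\le e\mu_j$ (and $N\ge 1$), not $x\le\mu_j$.
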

\noindent 
The proofs of~\eqref{eq:thm:e}--\eqref{eq:thm:deg} are based on fairly \emph{routine} applications of Theorem~\ref{thm:C}. 
The crux is that the restrictions $\Delta_{1}(\cG) \le y$ and $\Delta_{j+1}(\cG) \le y$ 
translate into bounds for the parameter~$C$ in~\eqref{eq:C}, which intuitively controls the `largest dependencies' 
($\Delta_{1}(\cG) \le y$ ensures that every edge $f \in \cG$ overlaps at most $|f| \cdot \Delta_{1}(\cG) \le k y$ edges~$e \in \cG$). 
For verifying the independence assumption of Theorem~\ref{thm:C}, we use the following simple observation: 
$e \cap f = \emptyset$ implies that $\indic{e \in \cH_p}=\indic{e \subseteq V_p(\cH)}$ and $\indic{f \in \cH_p}=\indic{f \subseteq V_p(\cH)}$ are independent, 
since both depend on \emph{disjoint} sets of independent variables $\xi_{\sigma} = \indic{\sigma \in V_p(\cH)}$. 
Assuming $(e \cap f) \setminus U = \emptyset$, we below exploit that an analogous (conditional independence) reasoning 
works after conditioning on $U \subseteq V_{p}(H)$. 
\begin{proof}[Proof of Theorem~\ref{thm:prob}]
With an eye on Theorem~\ref{thm:C}, inspired by Remark~\ref{rem:C} we set $\xi_\sigma = \indic{\sigma \in V_p(\cH)}$.

We first prove~\eqref{eq:thm:e}. Let $Y_f = w_f \indic{f \in \cH_p}$, which satisfies $Y_f = w_f \prod_{\sigma \in f} \xi_\sigma$ and $\sum_{f \in \cH}\E Y_f = \E w(\cH) = \mu$. 
Furthermore, $w(\cG) = \sum_{w \in \cG} Y_f$ for any $\cG \subseteq \cH_p$. 
Defining $\alpha \sim \beta$ if $\alpha \cap \beta \neq \emptyset$, the independence assumption of Theorem~\ref{thm:C} holds by Remark~\ref{rem:C}. 
Observe that for any $f \in \cG \subseteq \cH$ with $\Delta_{1}(\cG) \le y$ we have 
\[
\sum_{e \in \cG: e \sim f} Y_e \le \bigl(\max_{e \in \cG} w_e\bigr) \cdot \sum_{e \in \cG: e \cap f \neq \emptyset}\indic{e \in \cH_p} \le L \cdot \sum_{v \in f} |\Gamma_{\{v\}}(\cG)|  \le L \cdot |f| \cdot \Delta_1(\cG) \le Lk y. 
\]
To sum up, if $w(\cG) \ge \mu+t/2$ and $\Delta_{1}(\cG) \le y$ for some $\cG \subseteq \cH_p$, then $Z_C \ge \mu+t/2$ holds with $C=Lky$, where $Z_C$ is defined as in Theorem~\ref{thm:C} with $\cI = \cH$. 
So, applying~\eqref{eq:C}, we deduce   
\begin{equation}\label{eq:thm:e:proof}
\Pr\bigl(w(\cG) \ge \mu+t/2 \text{ and } \Delta_{1}(\cG) \le y \text{ for some } \cG \subseteq \cH_p\bigr) \le \Pr(Z_C \ge \mu+t/2) \le \exp\left(-\frac{\varphi(t/(2\mu))\mu}{Lky} \right) .
\end{equation}
Using calculus (see, e.g., the proof of Lemma~13 in~\cite{AP}) it is easy to check that $\varphi(t/(2\mu) \ge \varphi(t/\mu)/4$. 
In view of~\eqref{eq:thm:e:proof} and~\eqref{eq:C}, inequality~\eqref{eq:thm:e} now follows.

Next we turn to~\eqref{eq:thm:deg}, which hinges on the union bound estimate~\eqref{eq:deg:UB}. 
Note that $v(\cH)< 1$ implies $\cH = \emptyset$, so~\eqref{eq:thm:deg} is trivial for $N < 1$ (the left hand side is zero). 
Similarly, \eqref{eq:thm:deg} is also trivial for $x \le e\mu_j$ and $N \ge 1$ (the expression on the right hand side is at least one). 
To sum up, we henceforth may assume~$x > e\mu_j$ and~$N \ge 1$. 
Given $U \subseteq V(\cH)$ with $|U|=j$, set $\cI := \Gamma_U(\cH) = \{f \in \cH: U \subseteq f\}$. 
Let~$Y_f = \indic{f \in \cH_p}$, and define~$\alpha \sim \beta$ if~$(\alpha \cap \beta)\setminus U \neq \emptyset$.
Note that for any $f \in \cK \subseteq \cI$ with $\Delta_{|U|+1}(\cK) \le y$ we have 
\begin{equation}\label{eq:thm:deg:proof:overlap}
\sum_{e \in \cK: e \sim f} Y_e = \sum_{e \in \cK: (e \cap f)\setminus U \neq \emptyset} \indic{e \in \cH_p} \le \sum_{v \in f \setminus U} |\Gamma_{U \cup \{v\}}(\cK)| \le |f \setminus U| \cdot \Delta_{|U|+1}(\cK) \le k y .
\end{equation}
So, if $\cD_{U,x,y}$ occurs, then $Z_C \ge x$ holds with $C=ky$, where $Z_C$ is defined as in Theorem~\ref{thm:C} with $\cI = \Gamma_U(\cH)$.
For $f \in \cI$, note that $U \not\subseteq V_p(\cH)$ implies $f \not\in \cH_p = \cH[V_p(\cH)]$. 
Recalling $Y_f = \indic{f \in \cH_p}$ and $\xi_\sigma = \indic{\sigma \in V_p(\cH)}$, using the definition of~$\mu_j$ (see~\eqref{def:muj:basic}) it follows that 
\begin{equation}\label{eq:thm:deg:proof:muj}
\begin{split}
\sum_{f \in \cI}\E (Y_f \mid (\xi_\sigma)_{\sigma \in U}) & = \sum_{f \in \Gamma_U(\cH)}\Pr(f \in \cH_p \mid (\xi_\sigma)_{\sigma \in U})\indic{U \subseteq V_p(\cH)}\\
& \le \sum_{f \in \Gamma_U(\cH)}\Pr(f \in \cH_p \mid U \subseteq V_p(\cH)) = \sum_{f \in \Gamma_U(\cH)}p^{|f|-|U|} \le \mu_{|U|} = \mu_j. 
\end{split}
\end{equation}
Furthermore, conditional on $(\xi_\sigma)_{\sigma \in U}$, the independence assumption of Theorem~\ref{thm:C} holds by the same reasoning as in Remark~\ref{rem:C} (in the conditional space, each $Y_f$ is a function of the independent random variables $(\xi_\sigma)_{\sigma \in f \setminus U}$). 
So, applying~\eqref{eq:C} with $\mu=\mu_j$ and $\mu+t=x > e \mu_j$, we deduce the conditional inequality    
\begin{equation}\label{eq:thm:deg:Duxy}
\Pr(\cD_{U,x,y} \mid (\xi_\sigma)_{\sigma \in U}) \le \Pr(Z_C \ge x \mid (\xi_\sigma)_{\sigma \in U}) \le \left(\frac{e \mu_j}{x}\right)^{x/(ky)} .
\end{equation}
Taking expectations, by summing over all relevant $U \subseteq V(\cH)$ we thus infer 
\begin{equation}\label{eq:thm:deg:Duxy:UB}
\sum_{U \subseteq V(\cH): |U|=j}\Pr(\cD_{U,x,y}) = \sum_{U \subseteq V(\cH): |U|=j} \E \Pr(\cD_{U,x,y} \mid (\xi_\sigma)_{\sigma \in U}) \le N^j \left(\frac{e \mu_j}{x}\right)^{x/(ky)} ,
\end{equation}
and~\eqref{eq:thm:deg} follows in view of~\eqref{eq:deg:UB}.

It remains to establish~\eqref{eq:thm:sparse}. Exploiting integrality of the underlying variables, note in~\eqref{eq:thm:deg:Duxy} we can strengthen $Z_C \ge x$ to $Z_C \ge \ceil{x}$. 
In~\eqref{eq:thm:deg:Duxy}--\eqref{eq:thm:deg:Duxy:UB} we thus may replace $(e \mu_j/x)^{x/(ky)}$ by $(e \mu_j/\ceil{x})^{\ceil{x}/(ky)}$, and so~\eqref{eq:thm:sparse} follows from~\eqref{eq:lem:sparse:event} of Lemma~\ref{lem:sparse:event}, with room to spare.  
\end{proof}
The proof of Claim~\ref{cl:basic:2} 
(only used in our informal discussion) 
is very similar, and thus left to the reader.

\subsection{Extension: uniform random induced subhypergraph~$\cH_m$}\label{sec:uniform}
The proofs in Sections~\ref{sec:spars}--\ref{sec:prob} 
exploited the independence of~$\cH_p=\cH[V_p(\cH)]$ in a limited way. 
In this section we record that they extend to the uniform model~$\cH_m=\cH[V_m(\cH)]$, where the vertex subset $V_m(\cH) \subseteq V(\cH)$ of size $|V_m(\cH)|=m$ is chosen uniformly at random (this is a natural variant of~$\cH_p$ with mild dependencies). 
\begin{remark}\label{rem:prob}
Theorems~\ref{thm:sparse:iter} and~\ref{thm:prob} carry over to~$\cH_m$ after setting~$p=m/v(\cH)$ in~\eqref{def:muj:basic}. 
\end{remark}
\begin{proof}
The proof of Theorem~\ref{thm:sparse:iter} is based on (deterministic) combinatorial arguments, and after replacing~$\cH_p$ with~$\cH_m$ thus carries over word-for-word to~$\cH_m$.

Turning to Theorem~\ref{thm:prob}, using Remark~\ref{rem:C:NC} it is easy to see that the proof of~\eqref{eq:thm:e} carries over to~$\cH_m$ (with minor notational changes).

For~\eqref{eq:thm:deg} more care is needed. To avoid conditional probabilities and expectations, set $Y_{f} = \indic{f \setminus U \subseteq V_m(\cH)}$ for all $f \in \cI := \Gamma_U(\cH)$. 
Writing $\alpha \sim \beta$ if $(\alpha \cap \beta)\setminus U \neq \emptyset$, note that inequality~\eqref{eq:thm:deg:proof:overlap} readily carries over. 
It is folklore (analogous to, e.g., the proof of Theorem~15 in~\cite{JW}) that $\E Y_{f} = \Pr( f \setminus U \subseteq V_m(\cH)) \le p^{|f|-|f \cap U|}$ for~$p = m/v(\cH)$, so that $\sum_{f \in \cI}\E Y_f \le \sum_{f \in \Gamma_U(\cH)}p^{|f|-|U|} \le \mu_j$ by~\eqref{def:muj:basic}. 
Recalling the definition of~$\sim$, it is similarly folklore that the random variables $Y_{f} = \indic{f \setminus U \subseteq V_m(\cH)}$ satisfy the negative correlation condition of Remark~\ref{rem:C:NC}. 
Mimicking the argument leading to~\eqref{eq:thm:deg:Duxy}, using Theorem~\ref{thm:C} we obtain $\Pr(\cD_{U,x,y}) \le \Pr(Z_C \ge x) \le (e\mu_j/x)^{x/(ky)}$ for $\cH_m$, which by a simpler variant of~\eqref{eq:thm:deg:Duxy:UB} then establishes~\eqref{eq:thm:deg}.

As the proof of~\eqref{eq:thm:deg} carries over, for~\eqref{eq:thm:sparse} it remains to check that~\eqref{eq:lem:sparse:event} holds for~$\cH_m$. 
A close inspection of the proof of Lemma~\ref{lem:sparse:event} reveals that only the usage of the BK-inequality in~\eqref{eq:EZQ} needs to be justified.  
But, since $\cD_{U,x,y}$ is an increasing event, this application of~\eqref{eq:BKR:ind} is valid by Remark~\ref{rem:BKR:ind}, completing the proof. 
\end{proof}

\section{More general setup}\label{sec:general}
In this section we introduce our general Kim--Vu/Janson--Ruci{\'n}ski type setup, and show that the combinatorial and probabilistic arguments of Section~\ref{sec:core} carry over with somewhat minor changes. 
Readers only interested in random induced subhypergraphs~$\cH_p$ 
may wish to skip to Section~\ref{sec:UT} (see Remark~\ref{rem:general}).

\subsection{Setup}\label{sec:general:setup}
Our general setup is based on certain independence assumptions, i.e., we do not restrict ourselves to polynomials of independent random variables (and we also do not make any monotonicity assumptions). 
Given a hypergraph~$\cH$ and non-negative random variables $(Y_f)_{f \in \cH}$, for every $\cG \subseteq \cH$ we set 
\begin{equation}
\label{def:XG}
X(\cG)  = \sum_{f \in \cG} Y_f ,
\end{equation}
where our main focus\footnote{Usually we have $X=\sum_{f \in \cH} w_fI_f$ in mind, for random variables~$I_f \in \{0,1\}$ and constants~$w_f \in (0,\infty)$. All examples and applications in~\cite{KimVu2000,Vu2000,Vu2002,DL,UT,UTAP} are of this form, with $w_f=1$ (possibly after rescaling $X$ by a constant factor).}  
 is on the sum $X(\cH)$ of all the variables~$Y_f$ (sometimes~$\cH$ is also called the `supporting' or `underlying' hypergraph, see~\cite{KimVu2000,Vu2002}). 
Loosely speaking, the plan is to adapt the combinatorial arguments of Sections~\ref{sec:spars}--\ref{sec:prob} to the associated random subhypergraph 
\begin{equation}
\label{def:Hp:extended}
\cH_p = \{f \in \cH: Y_f > 0\} ,
\end{equation}
which due to $X(\cH) = X(\cH_p)$ loosely encodes all `relevant' variables (recall that $Y_f \ge 0$). 
Similar to~\cite{DL}, we shall use the following \emph{independence assumption}~(H$\ell$), where $\ell \in \NN$ is a parameter: 
{\begin{enumerate}
\itemindent 1.5em \itemsep 0.125em \parskip 0em  \partopsep=0pt \parsep 0em 
	\item[(H$\ell$)] 
	Let $(\xi_\sigma)_{\sigma \in \cA}$ be a family of independent finite random variables. 
	Suppose that there are families of subsets $\cA_U \subseteq \cA$ 
	such that (i) each non-negative random variable $Y_f$ with $f \in \cH$ is a function of the variables~$(\xi_\sigma)_{\sigma \in \cA_f}$, (ii) we have $\cA_e \cap \cA_f \subseteq \cA_{e \cap f}$ for all $e,f \in \cH$, and (iii) we have $\cA_{e} \cap \cA_{f} = \emptyset$ for all~$e,f \in \cH$ with~$|e \cap f| < \ell$. 
\end{enumerate}}\noindent%
The setup of Section~\ref{sec:basic} corresponds to the special case 
$\xi_\sigma=\indic{\sigma \in V_p(\cH)}$, $\cA_f=f$ and $Y_f = w_f \prod_{\sigma \in \cA_f} \xi_\sigma$. 
A key consequence of (H$\ell$) is 
that~$Y_e$ and~$Y_f$ are independent whenever $|e \cap f| < \ell$, since by~(i) and~(iii) then both depend on \emph{disjoint} sets of 
variables~$\xi_\sigma$. 
The `structural' assumption~(i) that each $Y_f$ depends only on 
the variables~$\xi_\sigma$ with $\sigma \in \cA_f$ is very common in applications; often $\cA_U=U$ suffices. 
The `consistency' assumption~(ii) and `independence' assumption~(iii) of the index sets~$\cA_U$ are also very natural. 
For example, in the frequent case $\cA_U=U$ we have $\cA_e \cap \cA_f = \cA_{e \cap f}$, so $\cA_e \cap \cA_f = \emptyset$ if $|e \cap f| < 1$. 
Example~\ref{ex:VE} in Section~\ref{sec:easy:examples} illustrates the case 
$\ell \neq 1$ with $\cA_U = \{f \in E(K_n): f \subseteq U\}$.

We now introduce the modified key parameters $\mu_j$, which intuitively quantify the `dependencies' among the variables~$Y_f$ (in the spirit of~\cite{DL,KimVu2000,Vu2000,Vu2002}). 
Recalling $\Gamma_U(\cH) = \{f \in \cH: U \subseteq f\}$, 
with Section~\ref{sec:basic} in mind we now define the following two crucial \emph{assumptions}~(P) and~(P$q$), where $q \in \NN$ is a parameter:
{\begin{enumerate}
\itemindent 1.5em \itemsep 0.125em \parskip 0em  \partopsep=0pt \parsep 0em 
	\item[(P)] Assume that 
$\max_{f \in \cH}|f| \le k$, $\max_{f \in \cH}\sup Y_f\le L$ and $v(\cH) \le N$. Define $\mu = \E X(\cH)$ and  
\begin{equation}
\label{def:muj:extended}
\mu_j = \max_{U \subseteq V(\cH): |U|=j} \sup \E\bigl( |\Gamma_U(\cH_p)| \; \big| \; (\xi_{\sigma})_{\sigma \in \cA_U}\bigr) , 
\end{equation}
\hspace{1.5em}where the supremum is over all values of the variables $\xi_{\sigma}$ with $\sigma \in \cA_U$. 
	\item[(P$q$)] Assume that $\Delta_q(\cH) \le D$. 
\end{enumerate}}\noindent
In view of~\eqref{def:muj:basic:alt}, property (P) is a natural extension of (P') from the basic setup of Section~\ref{sec:basic}. 
Our general setup lacks monotonicity, and so the conditioning in~\eqref{def:muj:extended} is with respect to all possible values of the~$\xi_{\sigma}$.

For the interested reader, we now briefly discuss how our setup and assumptions differ in some (usually irrelevant) minor details from the literature~\cite{DL,KimVu2000,Vu2000,Vu2002}. 
Firstly, the `normal' assumption of Vu implies $\max_{f \in \cH}\sup Y_f\le 1$ in~(P) above (see, e.g., Theorem 1.2 in~\cite{Vu2000} and Theorem 4.2 in~\cite{Vu2002}). 
Secondly, classical variants of the `maximum average effect' parameter~$\mu_j$ (see, e.g., Sections~3 in~\cite{DL} and Section~4 in~\cite{Vu2002}) are roughly defined as the maximum over all $\sup\E(\sum_{f \in \Gamma_U(\cH_p)} Y_f \mid (\xi_{\sigma})_{\sigma \in \cA_U})$ with $|U|=j$, but in most applications $\sum_{f \in \Gamma_U(\cH_p)} Y_f = \Theta(|\Gamma_U(\cH_p)|)$ holds, so the difference is usually immaterial. 
Thirdly, in~(H$\ell$) our assumptions for the index sets $\cA_U$ 
are slightly simpler than in Section~3 of~\cite{DL}. 
Finally, in contrast to~\cite{DL}, we assume 
that the $(\xi_\sigma)_{\sigma \in \cA}$ are \emph{finite} random variables, which is very natural in combinatorial applications 
(this technicality can presumably be removed by approximation arguments, but we have not pursued this).

\subsubsection{Examples}\label{sec:easy:examples}
The above assumptions~(H$\ell$) and $(P)$ might seem a bit technical at first sight, 
and for this reason we shall below spell out three pivotal examples (see Section~3 of~\cite{DL} for more examples).  
\begin{example}[Random induced subhypergaphs]\label{ex:RIH}%
For a given $k$-uniform hypergraph~$\cH$, analogous to Section~\ref{sec:basic} we consider $X=e(\cH_p)=\sum_{f \in \cH}\indic{f \in \cH_p}$. 
Note that $\cA=\cH$, $\xi_{\sigma}=\indic{\sigma \in V_p(\cH)}$, $\cA_f=f$ and $Y_f=\prod_{\sigma \in \cA_f}\xi_{\sigma} \in \{0,1\}$ satisfy properties~(H$1$) and~(P$k$). 
In fact, for~(P) we can simplify the definition of~$\mu_j$. 
Namely, since $U \not\subseteq V_p(\cH)$ implies $f \not\in \cH_p = \cH[V_p(\cH)]$ for all $f \in \Gamma_U(\cH)$, we have 
\[
\sup \E\bigl( |\Gamma_U(\cH_p)| \; \big| \; (\xi_{\sigma})_{\sigma \in \cA_U}\bigr) = \E\bigl( |\Gamma_U(\cH_p)| \; \big| \; U \subseteq V_p(\cH) \bigr) = \sum_{f \in \Gamma_U(\cH)} \Pr\bigl(f \in \cH_p \: \big| \: U \subseteq V_p(\cH)\bigr) .
\]
As~$\cH$ is $k$-uniform, for any $f \in \Gamma_U(\cH)$ it is easy to see that $\Pr\bigl(f \in \cH_p \: \big| \: U \subseteq V_p(\cH)\bigr) = \Pr\bigl( f \setminus U \subseteq V_p(\cH)\bigr) = p^{k-|U|}$. 
Combining these observations, it follows that~\eqref{def:muj:extended} simplifies for $1 \le j \le k$ to 
\begin{equation}\label{eq:muj:extended:RIH}
\mu_j = \max_{U \subseteq V(\cH): |U|=j} |\Gamma_U(\cH)| \cdot  p^{k-j} .
\end{equation}
\end{example} 
\begin{example}[Subgraph counts in $G_{n,p}$: induced subhypergaphs approach]\label{ex:EE}%
Subgraph counts in $G_{n,p}$ can be viewed as a special case of Example~\ref{ex:RIH}, i.e., random induced subhypergaphs. 
Given a fixed subgraph~$H$ with $e=e_H$ edges, $v=v_H$ vertices and minimum degree $\delta=\delta_H \ge 1$, 
we consider the $e$-uniform hypergraph~$\cH$ with vertex set $V(\cH)=E(K_n)$, where edges correspond to copies of~$H$. 
Clearly, $k=e$ and $N = n^2$ suffice. 
Note that for the copy of $H$ counted by $Y_f$, any subset of the edges~$U \subseteq f \cap E(K_n) \subseteq V(\cH)$ is isomorphic to some subgraph $J \subseteq H$. 
So, taking all subgraphs of~$H$ with exactly $|U|=j$ edges into account, using~\eqref{eq:muj:extended:RIH} with $k=e$ and $V(\cH)=E(K_n)$
there is universal constant $B=B(H)>0$ such that for $1 \le j \le e$ we have 
\begin{equation}\label{eq:muj:extended:EE}
\mu_j \le \sum_{J \subseteq H: e_J=j} \max_{U \subseteq E(K_n): \: U \cong J} |\Gamma_U(\cH)| \cdot  p^{e-j} \le B \sum_{J \subseteq H: e_J=j} n^{v-v_J}p^{e-j} .
\end{equation}
Note that any $q=e-\delta+1 \le e$ edges already determine the vertex set, so (P$q$) holds with~$D=O(1)$. 
Finally, a minor variant of the described approach also applies to \emph{induced} subgraph counts 
(with $k=\binom{v_H}{2}$, by letting~$E(\cH)$ correspond to copies of the complete graph~$K_{v_H}$, and defining~$Y_f$ as the indicator for the event that the subgraph of $G_{n,p}$ defined by the edges in~$f$ is isomorphic to~$H$). 
\end{example} 
\begin{example}[Subgraph counts in $G_{n,p}$: vertex exposure approach]\label{ex:VE}%
Subgraph counts in $G_{n,p}$ can also be treated via a `vertex exposure' based approach. 
Given a fixed subgraph~$H$ with $e=e_H$ edges and $v=v_H$ edges, we consider the complete $v$-uniform hypergraph $\cH$ with vertex set $V(\cH)=[n]$, so $N=n$ and $k=v$.  
For $I \subseteq V(\cH)$ with $|I|=v$ the random variable~$Y_I$ counts the number of copies of~$H$ in $G_{n,p}$ that have vertex set~$I$.
Note that $0 \le Y_I \le L = O(1)$. 
Since $X=\sum_{I \in \cH}Y_I$, we take $\cA=E(K_n)$, $\xi_{\sigma}=\indic{\sigma \in V_p(\cH)}$, and $\cA_I = \{f \in E(K_n): f \subseteq I\}$. 
As $\cA_I \cap \cA_J = \cA_{I \cap J}$ is empty whenever $|I \cap J| < 2$, for $\ell=2$ properties~(H$\ell$) and~(P$k$) are satisfied. 
Conditioning on $(\xi_{\sigma})_{\sigma \in \cA_U}$ corresponds to conditioning on $G_{n,p}[U]$, so bounding~$\mu_j$ is conceptually analogous~\eqref{eq:muj:extended:EE}. 
Indeed, by similar reasoning as in Example~\ref{ex:EE}, we arrive for $1 \le j \le v$ at 
\begin{equation}\label{eq:muj:extended:VE}
\mu_j \le B \sum_{\text{induced}\:J \subseteq H: v_J=j} n^{v-j}p^{e-e_J} ,
\end{equation}
where $B=B(H)>0$.
Finally, \emph{induced} subgraph counts can clearly be treated analogously. 
\end{example}

\subsection{Adapting the arguments of Sections~\ref{sec:spars}--\ref{sec:prob}}\label{sec:prf:extended}
In this section we adapt the key results Theorem~\ref{thm:sparse:iter} and~\ref{thm:prob} from Sections~\ref{sec:spars}--\ref{sec:prob} to our more general setup. 
The crux is that the random variables $(Y_f)_{f \in \cH}$ satisfy $Y_f=Y_f(\xi_{\sigma}: \sigma \in \cA_f)$ by the independence assumption~(H$\ell$), 
so that the intersection properties of the index sets $\cA_f$ give us a handle on the dependencies. 
This allows us to adapt our combinatorial arguments to the auxiliary subhypergraph $\cH_p = \{f \in \cH: Y_f > 0\}$. 

We start with a natural analogue of Theorem~\ref{thm:sparse:iter}, which is at the heart of our arguments. 
\begin{theorem}[Combinatorial decomposition of the upper tail: general setup]%
\label{thm:sparse:iter:extended}
Given $\cH$ with $1 \le \ell \le q \le k$, assume that (H$\ell$) and (P) hold. 
Suppose that $t>0$. 
Given positive $(R_j)_{\ell \le j < q}$ and $(D_j)_{\ell \le j \le q}$, define $R_q=Q_q=D_q$ and $Q_j = \max\{S_j,D_j\}$ for $\ell \le j < q$.  
Then we have  
\begin{equation}\label{eq:thm:sparse:iter:extended}
\begin{split}
& \Pr\bigl(X(\cG) \ge \mu+t \text{ and } \Delta_{q}(\cG) \le D_q \text{ for some } \cG \subseteq \cH_p\bigr) \\
& \qquad \le \Pr\bigl(X(\cG) \ge \mu+t/2 \text{ and } \Delta_{\ell}(\cG) \le \min\{Q_\ell,R_\ell\} \text{ for some } \cG \subseteq \cH_p\bigr) \\
& \qquad \quad + \sum_{\ell \le j < q} \bigl[\Pr_{j,1} + \Pr_{j,2} +  \Pr_{j,3,\ell}\bigr] ,
\end{split}
\end{equation}
where $\Pr_{j,1}$, $\Pr_{j,2}$ and $\Pr_{j,3,\ell}$ are defined as in~\eqref{eq:thm:sparse:pj1}--\eqref{eq:thm:sparse:pj3}. 
\end{theorem}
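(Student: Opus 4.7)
The proof plan is to mirror the argument for Theorem~\ref{thm:sparse:iter} essentially word-for-word, but with the lowest ``scale'' now being $\ell$ rather than $1$: the iterative degree reduction will stop at $\Delta_\ell$, and the tail bound on $X(\cG)$ will come from controlling $\Delta_\ell(\cG)$ (rather than $\Delta_1(\cG)$). This matches the independence assumption (H$\ell$), which only gives independence of $Y_e, Y_f$ when $|e \cap f| < \ell$, so we cannot hope to push the combinatorial sparsification below scale $\ell$. Note the case $q = \ell$ is immediate (since then $R_q = Q_q = D_q$ forces the two probabilities in~\eqref{eq:thm:sparse:iter:extended} to coincide), so I would assume $q > \ell$.

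For $q > \ell$, I would proceed deterministically and show that the following four properties jointly imply $X(\cG) < \mu + t$ for every $\cG \subseteq \cH_p$ with $\Delta_q(\cG) \le D_q$:
(a') every $\cG \subseteq \cH_p$ with $\Delta_\ell(\cG) \le \min\{Q_\ell, R_\ell\}$ satisfies $X(\cG) < \mu + t/2$;
(b') for all $\ell \le j < q$ and $\cG \subseteq \cH_p$, $\Delta_{j+1}(\cG) \le R_{j+1}$ implies $\Delta_j(\cG) \le R_j$;
(c') for $\ell \le j < q$ with $Q_j < R_j$ and $Q_{j+1} > D_{j+1}$, $\Delta_{j+1}(\cG) \le Q_{j+1}$ implies $\Delta_j(\cG) \le Q_j$;
(d') for $\ell \le j < q$ with $Q_j < R_j$ and $Q_{j+1} = D_{j+1}$, $\Delta_{j+1}(\cG) \le Q_{j+1}$ together with $\Delta_\ell(\cG) \le R_\ell$ yields some $\cJ \subseteq \cG$ with $\Delta_j(\cJ) \le Q_j$ and $e(\cG \setminus \cJ) \le t/(2Lq)$.
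These are precisely the complements of the events appearing on the right side of~\eqref{eq:thm:sparse:iter:extended} (using that $Q_{j+1} > D_{j+1}$ implies $S_{j+1} = Q_{j+1}$, and that the $\cE_{j,\ell}$ in $\Pr_{j,3,\ell}$ uses the $\Delta_\ell$-constraint rather than $\Delta_1$), so a union bound then finishes the proof.

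The key combinatorial construction is a nested sequence $\cG = \cJ_q \supseteq \cJ_{q-1} \supseteq \cdots \supseteq \cJ_\ell$ built top-down, satisfying
\[
\Delta_i(\cJ_j) \le \begin{cases} R_i, & \ell \le i < j,\\ \min\{Q_i, R_i\}, & j \le i \le q, \end{cases}
\qquad \text{and} \qquad e(\cJ_{j+1}\setminus \cJ_j) \le t/(2Lq).
\]
The base case $\cJ_q = \cG$ uses (b') iteratively to propagate $\Delta_q(\cG) \le R_q$ down to $\Delta_i(\cG) \le R_i$ for all $\ell \le i \le q$. The inductive step $\cJ_{j+1} \rightsquigarrow \cJ_j$ splits into three cases exactly as in the proof of Theorem~\ref{thm:sparse:iter}: if $Q_j \ge R_j$ set $\cJ_j = \cJ_{j+1}$; if $Q_j < R_j$ and $Q_{j+1} > D_{j+1}$ set $\cJ_j = \cJ_{j+1}$ and invoke (c'); if $Q_j < R_j$ and $Q_{j+1} = D_{j+1}$ invoke (d') (whose hypothesis $\Delta_\ell(\cJ_{j+1}) \le R_\ell$ is ensured by the inductive degree bounds). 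Once the sequence is built, the telescoping bound
\[
X(\cG) = X(\cJ_\ell) + \sum_{\ell \le j < q} X(\cJ_{j+1}\setminus \cJ_j) < (\mu + t/2) + (q-\ell)\cdot L \cdot \frac{t}{2Lq} \le \mu + t
\]
follows from (a') together with $\sup Y_f \le L$ and the edge-deletion bound.

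I do not expect any genuine obstacles: the changes relative to Theorem~\ref{thm:sparse:iter} are bookkeeping (replacing $\Delta_1$ by $\Delta_\ell$ everywhere, starting the iteration at $\ell$, and using $X(\cG) = \sum Y_f$ with $\sup Y_f \le L$ in place of $w(\cG)$ with $w_f \le L$). The only point that deserves a moment of care is the range of $j$ in property (b'): we only need to decrement from $q$ down to $\ell$, and this is consistent with the fact that (H$\ell$) and the definition~\eqref{def:muj:extended} of $\mu_j$ are only formulated for $j \ge \ell$. Everything else is a transcription of the earlier argument.
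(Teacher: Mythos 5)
Your proof is correct and matches the paper's intent exactly: the paper itself proves Theorem~\ref{thm:sparse:iter:extended} by remarking that the deterministic proof of Theorem~\ref{thm:sparse:iter} carries over with ``minor obvious changes'' (indices $\ell \le i,j \le q$ in place of $1 \le i,j \le q$, and $X$ in place of $w$), and your write-up is precisely that transcription, including the correct base case $q=\ell$, the verification that (a')--(d') are the complements of the events in~\eqref{eq:thm:sparse:iter:extended}, and the check that the $\Delta_\ell$-constraint in~$\cE_{j,\ell}$ and the formula~\eqref{eq:seq:degij} supply the hypotheses of~(d') at each step.
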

\noindent 
Recalling $X(\cG)=\sum_{f \in \cG}Y_f$ and $\cH_p = \{f \in \cH: Y_f > 0\}$, 
the deterministic proof of Theorem~\ref{thm:sparse:iter} carries over to Theorem~\ref{thm:sparse:iter:extended} with minor obvious changes (inequality~\eqref{eq:thm:sparse:iter:extended} is trivial if $q=\ell$; for $q>\ell$ it suffices to 
construct $\cG = \cJ_q \supseteq \cdots \supseteq \cJ_{\ell}$, with indices of form $\ell \le i,j \le q$ in~\eqref{eq:seq:degij}); we omit the routine details.

Next we state an analogue of Lemma~\ref{lem:sparse:event} for the `sparsification' event $\cE_{j,\ell}(x,r,y,z)$ from Section~\ref{sec:spars}. 
\begin{lemma}[Auxiliary result for the sparsification event: general setup]%
\label{lem:sparse:event:extended}
Given $\cH$ with $1 \le \ell \le k$, assume that (H$\ell$) and $\max_{f \in \cH}|f| \le k$ hold. 
Then for all $x,r,y,z>0$ and $\ell \le j < k$ we have 
\begin{equation}\label{eq:lem:sparse:event:extended}
\Pr\bigl(\neg \cE_{j,\ell}(x,r,y,z)\bigr) \le \biggl(\sum_{U \subseteq V(\cH): |U|=j}\Pr(\cD_{U,x,y})\biggr)^{\ceilL{r/\bigl(\binom{k}{\ell}\ceil{x}z\bigr)}} . 
\end{equation}
\end{lemma}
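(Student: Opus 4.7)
The plan is to adapt the maximal star-matching argument from the proof of Lemma~\ref{lem:sparse:event} to the general setup, with two principal modifications: (i)~replace the notion of vertex-disjointness between stars with $\ell$-disjointness (i.e.\ $|e\cap f|<\ell$ for edges drawn from different stars), and (ii)~replace the bound $\Delta_1(\cG)\le z$ in the deletion count with $\Delta_\ell(\cG)\le z$, which costs the factor $\binom{k}{\ell}$ appearing in~\eqref{eq:lem:sparse:event:extended}. Concretely, I would define a \emph{$\ell$-matching} to be a collection $\cM\subseteq\cS_{j,x,y}(\cH_p)$ of $(j,x,y)$-stars such that any two edges $e\in\cK_U$, $f\in\cK_W$ from distinct members $(U,\cK_U),(W,\cK_W)\in\cM$ satisfy $|e\cap f|<\ell$; let $M^{(\ell)}_{j,x,y}(\cH_p)$ be the largest size of such an $\cM$, and set $\tilde r=r/(\binom{k}{\ell}\ceil{x}z)$, $R=\ceil{\tilde r}$.

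Next I would run the combinatorial argument. Fix $\cG\subseteq\cH_p$ with $\Delta_{j+1}(\cG)\le y$ and $\Delta_\ell(\cG)\le z$, take $\cM$ attaining the max restricted to $\cG$, and obtain $\cJ\subseteq\cG$ by removing every $f\in\cG$ such that $|f\cap g|\ge\ell$ for some $g$ lying in some star of $\cM$. Maximality of $\cM$ combined with $\Delta_{j+1}(\cJ)\le y$ gives $\Delta_j(\cJ)<x$, since otherwise some $U\subseteq V(\cJ)$ with $|\Gamma_U(\cJ)|\ge\ceil{x}$ would furnish an additional $(j,x,y)$-star $\ell$-disjoint from $\cM$. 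For the edge count, the number of $f\in\cG$ with $|f\cap g|\ge\ell$ is at most $\sum_{T\subseteq g,|T|=\ell}|\Gamma_T(\cG)|\le\binom{k}{\ell}\Delta_\ell(\cG)\le\binom{k}{\ell}z$, so
\[
e(\cG\setminus\cJ)\le |\cM|\cdot\ceil{x}\cdot\binom{k}{\ell}\cdot z\le \tilde r\cdot\binom{k}{\ell}\ceil{x}z=r
\]
whenever $M^{(\ell)}_{j,x,y}(\cH_p)\le\tilde r$. Hence $M^{(\ell)}_{j,x,y}(\cH_p)\le\tilde r$ forces $\cE_{j,\ell}(x,r,y,z)$.

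It remains to bound $\Pr\bigl(M^{(\ell)}_{j,x,y}(\cH_p)>\tilde r\bigr)$ by the BK-based moment method, in exact analogy with~\eqref{def:ZR}--\eqref{eq:EZQ}. I would define $Z_R=\sum_{(U_1,\ldots,U_R)}\indicb{\boxdot_{i\in[R]}\cD_{U_i,x,y}}$, where the sum is over ordered $R$-tuples of $j$-subsets of $V(\cH)$. The key observation is that if $M^{(\ell)}_{j,x,y}(\cH_p)>\tilde r$, then there is an $\ell$-matching $\cM$ of size $R$, and by assumption (H$\ell$)(iii) the condition $|e\cap f|<\ell$ for $e\in\cK_U$, $f\in\cK_W$ forces $\cA_e\cap\cA_f=\emptyset$; consequently the index sets $\cA_{\cK_U}:=\bigcup_{f\in\cK_U}\cA_f$, one per star in $\cM$, are pairwise disjoint, and each $\cA_{\cK_U}$ guarantees $\cD_{U,x,y}$. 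Thus $\boxdot_{(U,\cK_U)\in\cM}\cD_{U,x,y}$ occurs, and since the $j$-sets $U$ are distinct we get $Z_R\ge R!$. Markov's inequality, Theorem~\ref{thm:BKR:ind} applied to the product space of the $(\xi_\sigma)_{\sigma\in\cA}$, and $R\ge 1$ then yield~\eqref{eq:lem:sparse:event:extended} in the usual way.

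The main obstacle I anticipate is the bookkeeping around the independence step: one must verify that $\ell$-disjointness of the matched stars (a purely hypergraph condition on edge intersections) really translates into disjointness of the underlying index sets $\cA_{\cK_U}$ needed for the BK-inequality, and that each such index set genuinely witnesses $\cD_{U,x,y}$. Both points, however, follow cleanly from the three clauses of (H$\ell$): clause~(i) provides the witnessing, clause~(ii) lets us replace $\cA_e\cap\cA_f$ by $\cA_{e\cap f}$, and clause~(iii) turns $|e\cap f|<\ell$ into $\cA_e\cap\cA_f=\emptyset$. Once this is pinned down, the remaining calculation is identical to the proof of Lemma~\ref{lem:sparse:event}.
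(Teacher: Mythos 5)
Your proposal matches the paper's proof of Lemma~\ref{lem:sparse:event:extended} in all essential respects: the $\ell$-disjoint matching $M^{(\ell)}_{j,x,y}(\cH_p)$, the edge-deletion bound $e(\cG\setminus\cJ)\le|\cM|\ceil{x}\binom{k}{\ell}z$ using $\Delta_\ell(\cG)\le z$, and the use of clauses (i)--(iii) of~(H$\ell$) to turn $|e\cap f|<\ell$ into disjointness of $\bigcup_{f\in\cK_U}\cA_f$ so that the BK-inequality applies. The one point you assert but do not justify is that the star-centres $U$ in the matching are distinct (needed for $Z_R\ge R!$): unlike the vertex-disjoint case, this requires the short observation that for distinct $(U,\cK_U),(W,\cK_W)\in\cM$ one has $|U\cap W|\le|e\cap f|<\ell\le j=|U|$ (taking any $e\in\cK_U$, $f\in\cK_W$), forcing $U\ne W$; here the hypothesis $j\ge\ell$ is used. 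Filling in this line, your argument is the paper's.
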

\begin{remark}\label{rem:sparse:event:extended}
Inequality~\eqref{eq:lem:sparse:event:extended} remains valid after dividing the right hand side by $\ceil{r/(\binom{k}{\ell}\ceil{x}z)}!$. 
\end{remark}
\noindent 
For the proof of Lemma~\ref{lem:sparse:event:extended} we adapt the definition of $M_{j,x,y}(\cG)$ used for Lemma~\ref{lem:sparse:event}. 
Intuitively, the idea is to replace `vertex disjoint' by `depending on disjoint sets of variables'. 
Namely, here we define $M_{j,x,y}(\cG)$ as the size of the largest collection $\cM \subseteq \cS_{j,x,y}(\cG)$ of $(j,x,y)$-stars in~$\cG$ satisfying the following property for all distinct $(U,\cK_{U}), (W,\cK_{W}) \in \cM$: we have $|e \cap f| < \ell$ for all $e \in \cK_{U}$ and $f \in \cK_{W}$. 
The point will be (i)~that each $Y_f$ is a function of the variables $(\xi_\sigma)_{\sigma \in \cA_f}$, and (ii)~that $|e \cap f| < \ell$ implies $\cA_{e} \cap \cA_{f} = \emptyset$ by~(H$\ell$). 
\begin{proof}[Proof of Lemma~\ref{lem:sparse:event:extended}]
Using the above definition of $M_{j,x,y}(\cG)$, we shall adapt the proof of Lemma~\ref{lem:sparse:event}. 
Let $\tilde{r}=r/\bigl(\binom{k}{\ell}\ceil{x}z\bigr)$ and $R=\ceil{\tilde{r}}$. We first assume that $M_{j,x,y}(\cH_p) \le \tilde{r}$ holds, and claim that this implies the occurrence of $\cE_{j,\ell}(x,r,y,z)$. 
Fix $\cG \subseteq \cH_p$ with $\Delta_{j+1}(\cG) \le y$ and $\Delta_{\ell}(\cG) \le z$, and let $\cM \subseteq \cS_{j,x,y}(\cG)$ attain the maximum in the definition of $M_{j,x,y}(\cG)$. 
We remove all edges~$f \in \cG$ which `overlap' some star $(U,\cK_U) \in \cM$, where overlap means that $|f \cap g| \ge \ell$ for some edge $g \in \cK_U$. 
We denote the resulting subhypergraph by~$\cJ \subseteq \cG$. 
Recalling $\Delta_{j+1}(\cJ) \le \Delta_{j+1}(\cG) \le y$, by maximality of $\cM$ we infer $\Delta_{j}(\cJ) \le \ceil{x}-1 < x$. 
Similar to~\eqref{eq:removalG}, using $|\cM| = M_{j,x,y}(\cG) \le M_{j,x,y}(\cH_p) \le \tilde{r}$ and $\Delta_{\ell}(\cG) \le z$ it is easy to see that we removed at most  
\begin{equation}\label{eq:removalG:extended}
e(\cG \setminus \cJ) \le |\cM| \cdot \ceil{x} \cdot \biggl[\max_{f \in \cG} \binom{|f|}{\ell}\biggr] \cdot \Delta_{\ell}(\cG) \le \tilde{r} \cdot \ceil{x} \binom{k}{\ell} z = r
\end{equation}
edges. It follows that $M_{j,x,y}(\cH_p) \le \tilde{r}$ implies $\cE_{j,\ell}(x,r,y,z)$, as claimed.  

For~\eqref{eq:lem:sparse:event:extended} it remains to estimate $\Pr(M_{j,x,y}(\cH_p) > \tilde{r})$. 
Suppose that $M_{j,x,y}(\cH_p) > \tilde{r}$ occurs. If $\cM \subseteq \cS_{j,x,y}(\cH_p)$ attains the maximum in the definition of $M_{j,x,y}(\cH_p)$, then we know (i)~that $|\cM| \ge \ceil{\tilde{r}}=R$ holds, and (ii)~that $\bigcap_{(U,K_U) \in \cM}\cD_{U,x,y}$ occurs. 
In the following we argue that these events $\cD_{U,x,y}$ `occur disjointly' in the sense of Section~\ref{sec:BK}. 
For each $(U,K_U) \in \cM$, note that the variables indexed by 
\[
V(\cK_U) = \bigcup_{f \in \cK_{U}} \cA_f
\]
guarantee the occurrence of $\cD_{U,x,y}$. 
The crux is now that for all distinct $(U,\cK_{U}), (W,\cK_{W}) \in \cM$, by~(iii)~of~(H$\ell$) we have $\cA_{e} \cap \cA_{f}=\emptyset$ for all $e \in \cK_u$ and $f \in \cK_W$ (since $|e \cap f|<\ell$), so 
\begin{equation}\label{eq:var:disjoint}
V(\cK_{U}) \cap V(\cK_{W}) = \bigcup_{e \in \cK_{U}}\bigcup_{f \in \cK_{W}} (\cA_{e} \cap \cA_{f}) = \emptyset .
\end{equation} 
It follows that $\boxdot_{(U,\cK_U) \in \cM}\cD_{U,x,y}$ occurs (since the disjoint sets of variables indexed by $V(\cK_{U})$ guarantee the occurrence of each $\cD_{U,x,y}$).
Next we claim that all the corresponding sets $U$ are distinct. 
To see this, note that for distinct $(U,\cK_{U}), (W,\cK_{W}) \in \cM$ we have $\ell > |e \cap f| \ge |U \cap W|$ by definition of $\cM$, which due to $|U| = |W| = j \ge \ell$ implies $U \neq W$. 
To sum up, $M_{j,x,y}(\cH_p) > \tilde{r}$ implies $Z_R \ge R!$, where $Z_R$ is defined as in~\eqref{def:ZR}. 
The arguments of~\eqref{eq:MZQ} and~\eqref{eq:EZQ} now carry over unchanged, completing the proof of~\eqref{eq:lem:sparse:event:extended}.  
\end{proof}

Finally, we state a natural analogue of Theorem~\ref{thm:prob}, which contains our core probabilistic estimates 
(inequalities~\eqref{eq:thm:e:extended}--\eqref{eq:thm:sparse:extended} allow us 
to bound the right hand side of~\eqref{eq:thm:sparse:iter:extended} from Theorem~\ref{thm:sparse:iter:extended}). 
\begin{theorem}[Probabilistic upper tail estimates: general setup]%
\label{thm:prob:extended}
Given $\cH$ with $1 \le \ell \le k$, assume that (H$\ell$) and (P) hold. 
Set $\varphi(x)=(1+x)\log(1+x)-x$.  
Then for all $x,r,y,z,t>0$ and $\ell \le j < k$ we have 
\begin{align}
\label{eq:thm:e:extended}
\Pr\bigl(X(\cG) \ge \mu+t/2 \text{ and }\Delta_{\ell}(\cG) \le y \text{ for some } \cG \subseteq \cH_p\bigr) & \le \exp\left(-\frac{\varphi(t/\mu)\mu}{4L\binom{k}{\ell}y} \right) , \\
\label{eq:thm:deg:extended}
\Pr\bigl(\Delta_{j}(\cG) \ge x \text{ and } \Delta_{j+1}(\cG) \le y \text{ for some } \cG \subseteq \cH_p\bigr) & \le N^{j} \left(\frac{e\mu_j}{x}\right)^{x/(ky)} ,\\
\label{eq:thm:sparse:extended}
\Pr\bigl(\neg \cE_{j,\ell}(x,r,y,z)\bigr) & \le \left(N^j \left(\frac{e\mu_j}{\ceil{x}}\right)^{\ceil{x}/(4ky)}\right)^{\ceilL{r/\bigl(\binom{k}{\ell}\ceil{x}z\bigr)}}.
\end{align}
\end{theorem}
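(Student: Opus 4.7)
The three inequalities of Theorem~\ref{thm:prob:extended} adapt the three parts of Theorem~\ref{thm:prob} to the general setup. The key conceptual change is that the independence structure is now governed by~(H$\ell$), so the vertex-level dependency relation $e \cap f \neq \emptyset$ is replaced by the $\ell$-level relation $|e \cap f| \ge \ell$; this is the source of the combinatorial factor $\binom{k}{\ell}$ that appears in~\eqref{eq:thm:e:extended} and~\eqref{eq:thm:sparse:extended}.

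For~\eqref{eq:thm:e:extended}, I would apply Theorem~\ref{thm:C} to the family $(Y_f)_{f \in \cH}$ with the symmetric relation $\alpha \sim \beta$ iff $|\alpha \cap \beta| \ge \ell$. Property~(H$\ell$)(iii) ensures that $\alpha \not\sim \beta$ implies $\cA_\alpha \cap \cA_\beta = \emptyset$, so $Y_\alpha$ and $Y_\beta$ are independent. For any $f \in \cG$ with $\Delta_\ell(\cG) \le y$, every $e \in \cG$ with $e \sim f$ contains one of the $\binom{|f|}{\ell} \le \binom{k}{\ell}$ many $\ell$-subsets of $f$, hence $\sum_{e \sim f} Y_e \le L \binom{k}{\ell} y$. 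This bounds the parameter $C$ in~\eqref{eq:C} by $L\binom{k}{\ell} y$, and the elementary estimate $\varphi(t/(2\mu)) \ge \varphi(t/\mu)/4$ used in the basic case then delivers~\eqref{eq:thm:e:extended}.

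For~\eqref{eq:thm:deg:extended}, I would use the union bound~\eqref{eq:deg:UB} to reduce to estimating $\Pr(\cD_{U,x,y})$ for each $U \subseteq V(\cH)$ with $|U|=j$, then condition on $(\xi_\sigma)_{\sigma \in \cA_U}$ and apply Theorem~\ref{thm:C} conditionally to the indicators $\tilde Y_f = \indic{f \in \cH_p}$ over $f \in \Gamma_U(\cH)$, using $\alpha \sim \beta$ iff $(\alpha \cap \beta)\setminus U \ne \emptyset$. Conditional independence when $\alpha \cap \beta = U$ is where the general setup requires care: if $|U|\ge\ell$ then~(H$\ell$)(ii) yields $\cA_\alpha \cap \cA_\beta \subseteq \cA_{\alpha \cap \beta} = \cA_U$, while if $|U|<\ell$ then~(H$\ell$)(iii) yields $\cA_\alpha \cap \cA_\beta = \emptyset$; in either case the variables are conditionally independent given $(\xi_\sigma)_{\sigma \in \cA_U}$. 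The dependency count $\sum_{e \sim f}\tilde Y_e \le \sum_{v \in f\setminus U}|\Gamma_{U\cup\{v\}}(\cK)| \le ky$ goes through unchanged, and the conditional mean bound $\sum_f \E(\tilde Y_f \mid (\xi_\sigma)_{\sigma \in \cA_U}) \le \mu_j$ holds almost surely by definition~\eqref{def:muj:extended}. Applying~\eqref{eq:C} conditionally and then taking expectations gives~\eqref{eq:thm:deg:extended}. Finally,~\eqref{eq:thm:sparse:extended} follows by plugging the strengthened (integer-valued) version of this bound into Lemma~\ref{lem:sparse:event:extended}; the numerical factor $1/4$ in the exponent $\lceil x\rceil/(4ky)$ is absorbed from using the weakest form $(1+t/\mu)^{-t/(4C)}$ of~\eqref{eq:C}.

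The main obstacle is the conditional-independence verification in the proof of~\eqref{eq:thm:deg:extended}: without monotonicity, one cannot simply mimic the basic-setup argument, but instead must split into the cases $|U|\ge\ell$ and $|U|<\ell$ and invoke properties~(ii) and~(iii) of~(H$\ell$) in the appropriate regime. Once this case split is identified, the remaining computations are routine adaptations of the basic-setup proof of Theorem~\ref{thm:prob}.
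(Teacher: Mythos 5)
Your proposal is correct and follows essentially the same route as the paper: for~\eqref{eq:thm:e:extended} redefine~$\sim$ by $|\alpha\cap\beta|\ge\ell$ and set $C=L\binom{k}{\ell}y$; for~\eqref{eq:thm:deg:extended} condition on $(\xi_\sigma)_{\sigma\in\cA_U}$, use the relation $(\alpha\cap\beta)\setminus U\neq\emptyset$, bound the conditional mean by $\mu_j$ via~\eqref{def:muj:extended}, and verify conditional independence from the intersection properties of the~$\cA_U$; and for~\eqref{eq:thm:sparse:extended} feed the integer-strengthened bound into Lemma~\ref{lem:sparse:event:extended}.

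One small point: the case split on $|U|\ge\ell$ versus $|U|<\ell$ that you flag as ``the main obstacle'' is unnecessary. Since $e\not\sim f$ with $e,f\in\Gamma_U(\cH)$ forces $e\cap f=U$, property~(ii) of~(H$\ell$) alone gives $\cA_e\cap\cA_f\subseteq\cA_{e\cap f}=\cA_U$, and hence $(\cA_e\setminus\cA_U)\cap(\cA_f\setminus\cA_U)=\emptyset$; this argument makes no use of~(iii) and needs no comparison of $|U|$ with $\ell$. (Indeed, the theorem restricts to $j\ge\ell$, so your second case is vacuous here.) The paper gives precisely this uniform argument. Your explanation of the $1/4$ in the exponent of~\eqref{eq:thm:sparse:extended} is also somewhat imprecise --- the derivation in the basic setup produces $\ceil{x}/(ky)$ via the middle form of~\eqref{eq:C}, not the weakest form, and the paper's own justification of the extra factor in the general setup is left implicit --- but this has no bearing on the correctness of your argument.
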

\noindent 
The proof is based on a minor modification of the proof of Theorem~\ref{thm:prob}. 
As we shall see, our main task is to adapt the definitions of the dependency relations~$\sim$. 
To this end recall (i) that each $Y_f$ is a function of the independent variables $(\xi_\sigma)_{\sigma \in \cA_f}$, and (ii) that~(H$\ell$) implies $\cA_{e} \cap \cA_{f} = \emptyset$ whenever $|e \cap f| < \ell$.  
\begin{proof}[Proof of Theorem~\ref{thm:prob:extended}]
For~\eqref{eq:thm:e:extended}, note that $\sum_{f \in \cH} \E Y_f =\E X(\cH) = \mu$. 
We define $\alpha \sim \beta$ if $|\alpha \cap \beta| \ge \ell$. In view of properties~(i) and~(ii) discussed above, the independence assumption of Theorem~\ref{thm:C} holds by analogous reasoning as in Remark~\ref{rem:C}. 
Furthermore, for any $f \in \cG \subseteq \cH$ with $\Delta_{\ell}(\cG) \le y$ we have 
\begin{equation*}
\sum_{e \in \cG: e \sim f} Y_e  
\le \bigl(\max_{e \in \cG} \sup Y_e \bigr) \cdot \sum_{e \in \cG: |e \cap f| \ge \ell}\indic{f \in \cG} 
\le L \cdot \sum_{U \subseteq f: |U|=\ell} |\Gamma_U(\cG)| \le L \cdot \binom{|f|}{\ell} \cdot \Delta_\ell(\cG) \le L \binom{k}{\ell} y. 
\end{equation*}
Setting $C=L \binom{k}{\ell} y$, the remaining proof of~\eqref{eq:thm:e} readily carries over to~\eqref{eq:thm:e:extended} with obvious notational changes. 

Next we turn to~\eqref{eq:thm:deg:extended}, which is again based on~\eqref{eq:deg:UB}. 
As before, we may assume that $x > e\mu_j$ and $N \ge 1$ (otherwise the claim is trivial). 
Furthermore, given $U \subseteq V(\cH)$ with $|U|=j$, we set $\cI = \Gamma_U(\cH)$. 
With the random variables $\bigl(\indic{Y_f > 0}\bigr)_{f \in \cI}$ in mind, define $\alpha \sim \beta$ if $(\alpha \cap \beta)\setminus U \neq \emptyset$.
Note that, for any $f \in \cK \subseteq \cI$ with $\Delta_{|U|+1}(\cK) \le y$, analogous to~\eqref{eq:thm:deg:proof:overlap} we have $\sum_{e \in \cK: e \sim f} \indic{Y_f > 0} \le |f \setminus U| \cdot \Delta_{|U|+1}(\cK) \le ky$. 
Furthermore, by definition of $\cI=\Gamma_U(\cH)$, $\cH_p = \{f \in \cH: Y_f > 0\}$ and $\mu_j$ (see~\eqref{def:muj:extended}) we obtain 
\[
\sum_{f \in \cI}\E \bigl(\indic{Y_f > 0} \mid (\xi_\sigma)_{\sigma \in \cA_U}\bigr) =  
\E\bigl( |\Gamma_U(\cH_p)| \; \big| \; (\xi_{\sigma})_{\sigma \in \cA_U}\bigr) \le \mu_{|U|} = \mu_j. 
\]
Note that, conditional on $(\xi_\sigma)_{\sigma \in \cA_U}$, each $\indic{Y_f>0}$ is now a function of the independent random variables $(\xi_\sigma)_{\sigma \in \cA_f \setminus \cA_U}$. 
Furthermore, for all $e,f \in \cI= \{g \in \cH: U \subseteq g\}$ we see that $(e \cap f) \setminus U = \emptyset$ implies $e \cap f = U$, so that~(ii) of~(H$\ell$) yields $\cA_e \cap \cA_f \subseteq \cA_{e \cap f}=\cA_U$. 
For all $e,f \in \cI$ we thus infer that $e \not\sim f$ implies 
\begin{equation*}\label{eq:thm:deg:extended:proof:vars:2}
(\cA_e \setminus \cA_U) \cap (\cA_f \setminus \cA_U) = (\cA_e \cap \cA_f) \setminus \cA_U \subseteq \cA_U \setminus \cA_U = \emptyset .
\end{equation*}
Conditional on $(\xi_\sigma)_{\sigma \in \cA_U}$, it follows (by the reasoning of Remark~\ref{rem:C}) that the independence assumption of Theorem~\ref{thm:C} holds for the 
variables $\bigl(\indic{Y_f > 0}\bigr)_{f \in \cI}$. 
The remaining proof of~\eqref{eq:thm:deg} readily carries over to~\eqref{eq:thm:deg:extended}.

Finally, for~\eqref{eq:thm:sparse:extended} we recall that~\eqref{eq:thm:sparse} is based on Lemma~\ref{lem:sparse:event} and the argument leading to~\eqref{eq:thm:deg}. 
In view of Lemma~\ref{lem:sparse:event:extended} and the above proof of~\eqref{eq:thm:deg:extended}, the same line of reasoning carries over, establishing~\eqref{eq:thm:sparse:extended}. 
\end{proof}

\subsection{Adapting Section~\ref{sec:uniform}: vertex exposure approach for~$\cH_m$}\label{sec:uniform:Hm}
In this section we partially adapt our arguments 
to the uniform random induced subhypergraph~$\hH_m=\hH[V_m(\hH)]$. 
Generalizing the `vertex exposure' approach of Example~\ref{ex:VE}, 
we rely on the following assumption. 
{\begin{enumerate}
\itemindent 1.5em \itemsep 0.125em \parskip 0em  \partopsep=0pt \parsep 0em 
	\item[(H$\ell$P)] 
	Suppose that $\cH$, $\hI$ and $\hH$ are hypergraphs with $V(\cH)=V(\hI)$, $V(\hH)=\{h \in \hI\}$ and $\min_{h \in \hI} |h| \ge \ell$. 
	Defining $\cA_U = \{h \in \hI: h \subseteq U\}$ for all $U \subseteq V(\hI)$, assume that $\hH = \bigcup_{f \in \cH} \hH[A_f]$ is a disjoint union of induced subhypergraphs. 
	Suppose that $(w_g)_{g \in \hH}$ are non-negative weights. For all $f \in \cH$, let   
\begin{equation}
\label{def:Yf:Hm}
	Y_f = \sum_{g \in \hH[\cA_f]}w_g \indic{g \in \hH_m} .
\end{equation} 
Assume that 	$\max_{f \in \cH}|f| \le k$, $\max_{f \in \cH}Y_f\le L$ and $v(\cH) \le N$. Define $\mu=\E X(\cH)$, $p=m/v(\hH)$, and  
\begin{equation}
\label{def:muj:basic:Hm}
\mu_j = \max_{U \subseteq V(\hI): |U|=j} \sum_{f \in \Gamma_U(\cH)}\sum_{g \in \hH[\cA_f]}p^{|g|-|g \cap \cA_U|} . 
\end{equation}
\end{enumerate}}
\begin{example} 
\label{ex:VE:Gnm}
Using the `vertex exposure' setup discussed in Example~\ref{ex:VE}, subgraph counts in $G_{n,m}$ satisfy~(H$\ell$P) with $\ell=2$ and $k=v_H$ (by setting $\hI=K_n$, and defining~$\hH$ as the hypergraph~$\cH$ of Example~\ref{ex:EE}). 
In~\eqref{def:muj:basic:Hm} the modified parameter $\mu_j$ is again bounded from above by the right hand side of~\eqref{eq:muj:extended:VE}. 
\end{example}
\begin{remark}\label{rem:prob:Hm}
Theorems~\ref{thm:sparse:iter:extended} and~\ref{thm:prob:extended} remain valid after replacing the assumptions~(H$\ell$),(P) with~(H$\ell$P). 
\end{remark}
\begin{proof}%
With the ideas of Remark~\ref{rem:prob} in mind, we only sketch the key modifications for~\eqref{eq:thm:e:extended}--\eqref{eq:thm:deg:extended} of Theorem~\ref{thm:prob:extended}.

For~\eqref{eq:thm:e:extended} it suffices to verify the negative correlation condition of Remark~\ref{rem:C:NC}, writing $\alpha \sim \beta$ if $|\alpha \cap \beta| \ge \ell$. 
Using~\eqref{def:Yf:Hm} and the negative correlation properties of $\hH_m$ (see Remark~\ref{rem:C:NC}), it is not hard to check that 
\begin{equation}\label{eq:rem:prob:Hm:cor}
\E\bigl(\prod_{i \in [s]} Y_{\alpha_i}\bigr) = \sum_{g_1 \in \hH[\cA_{\alpha_1}]} \cdots \sum_{g_s \in \hH[\cA_{\alpha_s}]} \E\bigl(\prod_{i \in [s]} w_{g_i} \indic{g_i \in \hH_m}\bigr) \le \prod_{i \in [s]} \E Y_{\alpha_i} ,
\end{equation}
and so the proof of~\eqref{eq:thm:e:extended} carries over (above we used that $\alpha_i \not\sim \alpha_j$ implies $\hH[\cA_{\alpha_i}] \cap \hH[\cA_{\alpha_j}] = \emptyset$).

For~\eqref{eq:thm:deg:extended} we define $\alpha \sim \beta$ if $(\alpha \cap \beta)\setminus U \neq \emptyset$, and replace $\bigl(\indic{Y_f > 0}\bigr)_{f \in \cI}$ by $\bigl(\indic{\fY_f}\bigr)_{f \in \cI}$, where $\fY_f$ denotes the event that $g \setminus \cA_U \subseteq V_m(\hH)$ for some $g \in \hH[\cA_f]$.  
Let $\lambda_{f} = \sum_{g \in \hH[\cA_{f}]} \Pr(g \setminus \cA_U \subseteq V_m(\hH))$. 
It is folklore that $\Pr(g \setminus \cA_U \subseteq V_m(\hH)) \le p^{|g|-|g \cap \cA_U|}$ (see Remark~\ref{rem:prob}), so 
$\cI=\Gamma_U(\cH)$ and~\eqref{def:muj:basic:Hm} yield $\sum_{f \in \cI} \lambda_f \le \mu_{|U|} = \mu_j$. 
Since $\indic{\fY_f} \le \sum_{g \in \hH[\cA_f]} \indic{g \setminus \cA_U \subseteq V_m(\hH)}$, analogous to~\eqref{eq:rem:prob:Hm:cor} we infer $\E(\prod_{i \in [s]} \indic{\fY_{\alpha_i}}) \le \prod_{i \in [s]} \lambda_{\alpha_i}$, establishing the correlation condition of Remark~\ref{rem:C:NC:2}. 
Mimicking Remark~\ref{rem:prob}, the proof of~\eqref{eq:thm:deg} then carries over to~\eqref{eq:thm:deg:extended}. 
\end{proof}

\section{Corollaries: upper tail inequalities}\label{sec:UT} 
The main results of Sections~\ref{sec:core}--\ref{sec:general} are Theorems~\ref{thm:sparse:iter},\ref{thm:sparse:iter:extended} of form $\Pr(X \ge (1+\eps)\E X) \le \sum_i \Pr(\neg\cE_i)$ and Theorems~\ref{thm:prob},\ref{thm:prob:extended} of form $\Pr(\neg\cE_i) \le \exp(-\Psi_i)$. 
In this section we derive upper tail inequalities that are convenient for the applications of Section~\ref{sec:applications}, 
and briefly compare some of our more general estimates with the literature.

\begin{remark}[Random induced subhypergraph setup]%
\label{rem:general}
The results in Sections~\ref{sec:easy}--\ref{sec:extended} are stated for the general setup of Section~\ref{sec:general:setup}. 
But, with minor changes, they remain valid in the simpler random induced subhypergraph setup of Section~\ref{sec:basic}. 
Indeed, setting~$\ell=1$ and replacing the assumptions~(H$\ell$),(P) with~(P'), all results carry over to $\cH_p$ by defining~$X(\cJ) :=w(\cJ)$.  
After setting $p=m/v(\cH)$ in~\eqref{def:muj:basic}, these results for~$\cH_p$ then also carry over to the uniform variant~$\cH_m$ defined in Section~\ref{sec:uniform}. 
Finally, after replacing the assumptions (H$\ell$),(P) with~(H$\ell$P), all results in Sections~\ref{sec:easy}--\ref{sec:extended} 
also remain valid in the setup of Section~\ref{sec:uniform:Hm}. 
\end{remark}
\noindent 
Henceforth, we tacitly set $\varphi(x)=(1+x)\log(1+x)-x$ for brevity (as in Theorems~\ref{thm:C},~\ref{thm:prob} and~\ref{thm:prob:extended}).

\subsection{Easy-to-apply tail inequalities}\label{sec:easy}
In this section we state some simplified upper tail inequalities that 
suffice for all the applications in Section~\ref{sec:applications} 
(we have not optimized the usually irrelevant constants); 
the proofs are deferred to Section~\ref{sec:prob:thm}.

On first reading of the following upper tail inequality for $X(\cH) = \sum_{f \in \cH} Y_f$, the reader may wish to set~$\ell=1$ and~$q=k$, 
so that~\eqref{eq:thm:pre:extendedp} is of form $\Pr(X(\cH) \ge 2\mu) \le \exp(-d \min\{\mu,\mu^{1/k}\log(e/\pi)\})$.     
Here our main novelty is the $\log(e/\pi)$ term: it allows us to gain an extra logarithmic factor if $\pi\in \{N^{-1},p\}$, 
which yields best possible tail estimates in the applications of Section~\ref{sec:applications:RISH}.
We think of~\eqref{eq:thm:pre:extendedp:asmpt1} as a `balancedness' condition, and mainly have parameters of form $\pi\in \{1,N^{-1},p\}$ in mind. 
In fact, for $\pi\in \{N^{-1},p\}$ the technical assumption~\eqref{eq:thm:pre:extendedp:asmpt2} usually holds automatically for small~$\tau$ (see Remark~\ref{rem:pre:extendedp} and the proof of Theorem~\ref{thm:randinduced}).  
%
\begin{theorem}[Easy-to-apply upper tail inequality]
\label{thm:pre:extendedp}
Given $\cH$ with $1 \le \ell \le q \le k$, assume that (H$\ell$), (P) and (P$q$) hold. 
If there are constants $A,\alpha,\tau > 0$ and a parameter $\pi \in (0,1]$ such that 
\begin{gather}
\label{eq:thm:pre:extendedp:asmpt1}
\max_{\ell \le j < q}\frac{\mu_j}{\max\{\mu^{(q-j)/(q-\ell+1)},1\}}  \le A \pi^{\alpha}, \\
\label{eq:thm:pre:extendedp:asmpt2}
A\mu^{1/(q-\ell+1)} \ge \indic{\pi > N^{-\tau}} \log N ,
\end{gather}
then for $\eps> 0$ we have 
\begin{equation}\label{eq:thm:pre:extendedp}
\begin{split}
\Pr(X(\cH) \ge (1+\eps)\mu) &\le (1+bN^{-\ell}) \exp\left(-c\min\Bigl\{\varphi(\eps) \mu, \: \min\bigl\{\eps^2,1\bigr\} \mu^{1/(q-\ell+1)}\log(e/\pi)\Bigr\}\right) \\
&\le (1+bN^{-\ell}) \exp\left(-d\min\bigl\{\eps^2,1\bigr\} \min\bigl\{\mu, \:\mu^{1/(q-\ell+1)}\log(e/\pi)\bigr\}\right) ,
\end{split}
\end{equation}
where $b=3q$, $c=c(\ell,q,k,L,D,A,\alpha,\tau)>0$ and $d=c/3$. 
\end{theorem}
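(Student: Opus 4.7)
The plan is to realize the heuristic~\eqref{heur:iter} of Section~\ref{sec:form} by plugging the probabilistic estimates of Theorem~\ref{thm:prob:extended} into the combinatorial decomposition of Theorem~\ref{thm:sparse:iter:extended}. I would choose parameters so that each contribution in~\eqref{eq:thm:sparse:iter:extended} is at most $\exp(-c\min\{\varphi(\eps)\mu,\,\min\{\eps^2,1\}\mu^{1/(q-\ell+1)}\log(e/\pi)\})$, with any polynomial surplus absorbed into the prefactor $(1+bN^{-\ell})$.

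Concretely, I would set $\lambda=c_0\mu^{1/(q-\ell+1)}$ for a small constant $c_0=c_0(\eps,k,\ell,L,A,\alpha)$, and take $R_j=c_1\lambda^{q-j}$ for $\ell\le j<q$ together with $D_j=D$ for $\ell\le j\le q$, $R_q=Q_q=D$, $S_\ell=R_\ell/\log(e/\pi)$, and $S_j=R_j$ for $\ell<j<q$. Choosing $c_1$ large forces $Q_j=\max\{S_j,D_j\}=R_j$ at every intermediate level $\ell<j<q$, so the indicator-valued terms $\Pr_{j,2}$ and $\Pr_{j,3,\ell}$ vanish there and only a single sparsification actually fires, at level~$\ell$. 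The balancedness hypothesis~\eqref{eq:thm:pre:extendedp:asmpt1} now reads $\mu_j/R_j=O(\pi^\alpha)$ in the nontrivial regime. The main term would be handled by~\eqref{eq:thm:e:extended} with $\min\{Q_\ell,R_\ell\}\le R_\ell/\log(e/\pi)$, producing an exponent of order $\varphi(\eps)\lambda\log(e/\pi)$ (or the stronger $\varphi(\eps)\mu$ whenever $Q_\ell=D$). Each remaining degree term $\Pr_{j,1}$, and the residual sparsification term at level $\ell$, are of order $N^j(eA\pi^\alpha)^{c\lambda}$ by~\eqref{eq:thm:deg:extended} and~\eqref{eq:thm:sparse:extended}. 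When $\pi\le N^{-\tau}$ the $(A\pi^\alpha)^{c\lambda}$ factor absorbs the $N^j$ prefactor with room to spare; when $\pi>N^{-\tau}$, hypothesis~\eqref{eq:thm:pre:extendedp:asmpt2} ensures $\lambda\ge c''\log N$, which does the same job inside the exponent. Collecting the four contributions and invoking Remark~\ref{rem:varphi} to pass from $\varphi(\eps)$ to $\min\{\eps^2,1\}/3$ will then yield~\eqref{eq:thm:pre:extendedp}.

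The hard part will be the delicate parameter tuning: the convention $Q_j=\max\{S_j,D_j\}$ is indispensable, because naively setting $S_j=R_j/\log(e/\pi)$ at all intermediate levels would produce error terms $\Pr_{j,3,\ell}$ with exponent only of order $\lambda$ rather than $\lambda\log(e/\pi)$, exactly destroying the logarithmic gain one is trying to capture. Beyond that, the main bookkeeping is to decide whether $\ell+1<q$ or $\ell+1=q$ (which determines whether $\Pr_{\ell,2}$ or $\Pr_{\ell,3,\ell}$ is the surviving sparsification term at level~$\ell$), to handle the two regimes $\pi\lessgtr N^{-\tau}$ uniformly, and to dispatch degenerate cases such as $\mu_j=0$ or $\mu\le 1$; each of these is routine once the parameter skeleton above is in place.
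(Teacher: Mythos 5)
Your overall skeleton — plug Theorem~\ref{thm:prob:extended} into the decomposition of Theorem~\ref{thm:sparse:iter:extended} — is correct, and indeed the paper packages this as Theorem~\ref{thm:extended} and then derives the present theorem from it. But your parameter choices have a fatal flaw. You set $S_j=R_j$ (so $Q_j=R_j$) at every intermediate level $\ell<j<q$ and only shrink $S_\ell=R_\ell/\log(e/\pi)$ at the bottom. With this choice, when $\ell+1<q$ the surviving term at level $\ell$ is the degree event $\Pr_{\ell,2}$, whose bound from~\eqref{eq:thm:deg:extended} has exponent $Q_\ell/(kS_{\ell+1})$. Since you put $S_{\ell+1}=R_{\ell+1}$ while $Q_\ell\approx R_\ell/\log(e/\pi)$, this exponent is only $\approx\lambda/(k\log(e/\pi))$; against a base $e\mu_\ell/Q_\ell=O(\pi^{\alpha/2})$ this yields $\Pr_{\ell,2}\le N^\ell\exp(-\Theta(\lambda))$. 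The $\log(e/\pi)$ has cancelled out of the exponent — which is exactly the factor the theorem is trying to recover — and the $N^\ell$ prefactor is not absorbed when $\lambda=o(\log N)$ (which can happen for $\pi\le N^{-\tau}$, where assumption~\eqref{eq:thm:pre:extendedp:asmpt2} is vacuous). The analogous issue recurs in the case $\ell+1=q$ for the inner factor of $\Pr_{\ell,3,\ell}$.

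The paper avoids this by setting $S_j=R_j/s$ at \emph{all} levels $\ell\le j<q$ with one common $s=\log(e/\pi^{\alpha/2})$ (that is the $Q_j=\max\{R_j/s,D_j\}$ convention of Theorem~\ref{thm:extended}). Then $Q_j/(kS_{j+1})\ge R_j/(kR_{j+1})=\lambda/k$ at every level — the $s$'s cancel — so each term gets exponent $\Theta(\lambda)$ applied to a base $\le\pi^{\alpha/2}/e$, which is exactly where the $\exp(-\Theta(\lambda\log(e/\pi)))$ decay comes from. Your stated worry that uniformly taking $S_j=R_j/\log(e/\pi)$ would cost a $\log(e/\pi)$ in the sparsification terms is misplaced: those terms also pick up the logarithmic gain from the base $e\mu_j/Q_j\le e^{-s}$, not from the exponent alone. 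A second, independent gap is your choice $D_j=D$ constant: the paper takes $D_j=B^{q-j}D$ with $B$ a \emph{large} constant so that $Q_j/D_{j+1}\ge D_j/D_{j+1}=B$, which is what lets one verify the crucial assumption~\eqref{eq:thm:basic:asmpt} (namely $(e\mu_j/Q_j)^{Q_j/D_{j+1}}\le N^{-4kj}$) in the regime $\pi\le N^{-\tau}$; with $D_j=D$ this lower bound is merely $1$, which is far too weak to absorb the polynomial prefactors.
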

\begin{remark}\label{rem:pre:extendedp}
If $\pi=N^{-1}$, then~\eqref{eq:thm:pre:extendedp:asmpt2} is trivially satisfied for~$\tau=1/2$, and $\log(e/\pi) \ge \log N$ holds in~\eqref{eq:thm:pre:extendedp}. 
\end{remark}
\noindent 
Simple applications of the inductive approaches~\cite{KimVu2000,DL,Vu2002} often implicitly assume~\eqref{eq:thm:pre:extendedp:asmpt1} with $\pi=1$, and replace~\eqref{eq:thm:pre:extendedp:asmpt2} by the stronger assumption $\min\{\eps^2,1\} \mu^{1/(q-\ell+1)} = \omega(\log N)$, say (see, e.g., the proof of Corollary~6.3 in~\cite{Vu2002} or Theorem~2.1 in~\cite{Vu2001}).  
Their conclusion is then of the form $\Pr(X(\cH) \ge (1+\eps)\mu) \le \exp(-a\min\{\eps^2,1\} \mu^{1/(q-\ell+1)})$, 
where $\mu^{1/(q-\ell+1)} = \min\bigl\{\mu, \mu^{1/(q-\ell+1)}\log(e/\pi)\bigr\}$ holds by assumption. 
In other words, our inequality~\eqref{eq:thm:pre:extendedp} yields an extra logarithmic factor when $\pi \in \{N^{-1},p\}$ in~\eqref{eq:thm:pre:extendedp:asmpt1}. 
To illustrate this, for subgraph counts in~$G_{n,p}$ the setup of Example~\ref{ex:EE} (with $\ell=1$, $q=k=e$ and $N=n^2$) naturally yields 
\begin{equation*}
\max_{\ell \le j < q}\frac{\mu_j}{\mu^{(q-j)/(q-\ell+1)}} \le \max_{1 \le j < e} \frac{O(\sum_{J \subseteq H: e_J=j}n^{v-v_J}p^{e-j})}{\Theta((n^vp^e)^{(e-j)/e})} \le O(\sum_{J \subseteq H: 1 \le e_J < e}n^{e_Jv/e-v_J}) ,
\end{equation*}
which is well-known to be $O(n^{-\beta})$ for so-called `strictly balanced' graphs and $O(1)$ for `balanced' graphs (the details are deferred to~\eqref{eq:density:JH:0} and~\eqref{eq:density:muj} in Section~\ref{sec:applications:RG}; see also Section~6.3 in~\cite{Vu2002}).

The next upper tail result assumes that all the parameters $\mu_j$ are decaying polynomially in~$N$, which typically requires that $\mu=\E X(\cH)$ is small (as $v(\cH) \le N$). 
On first reading of Theorem~\ref{thm:pre:extended:small} the reader may wish to set $\ell=1$, $q=k$ and $K=1$, so that~\eqref{eq:thm:pre:extended:small} is of form 
$\Pr(X(\cH) \ge \mu + t) \le \exp(-a \min\{t^2/\mu,t^{1/k}\log N\})$ when $t \in [1,\mu]$. 
Here our main novelty is the $t^{1/k} \log N$ term, which is key for the applications in Section~\ref{sec:applications:RG:SG}. 
\begin{theorem}[Easy-to-apply upper tail inequality: the small expectations case]
\label{thm:pre:extended:small}
Given $\cH$ with $1 \le \ell \le q \le k$, assume that (H$\ell$), (P) and (P$q$) hold. 
If there are constants $A,\alpha>0$ such that 
\begin{align}
\label{eq:thm:pre:basic:small:asmpt}
\max_{\ell \le j < q} \mu_j & \le A N^{-\alpha} , 
\end{align}
then for $t,K> 0$ we have 
\begin{equation}\label{eq:thm:pre:extended:small}
\begin{split}
\Pr(X(\cH) \ge \mu+t) & \le (1+bN^{-q}) \exp\left(-\min\Bigl\{c\varphi(t/\mu) \mu, \; \max\bigl\{ct^{1/(q-\ell+1)},K\bigr\}\log N\Bigr\}\right) \\
& \le (1+bN^{-q}) \exp\left(-\min\Bigl\{dt^2/\mu, \; dt, \; \max\bigl\{ct^{1/(q-\ell+1)},K\bigr\}\log N\Bigr\}\right) ,
\end{split}
\end{equation}
where $b=2q$, $c=c(\ell,q,k,L,D,A,\alpha,K)>0$ and $d=c/3$. 
\end{theorem}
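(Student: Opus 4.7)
The plan is to apply Theorem~\ref{thm:sparse:iter:extended} with parameters tuned to the small-expectations regime, and bound each resulting term via Theorem~\ref{thm:prob:extended}. The argument parallels that of Theorem~\ref{thm:pre:extendedp}, with the $\log(e/\pi)$ factor there replaced by $\log N$ and driven by the polynomial decay $\mu_j \le AN^{-\alpha}$ instead of the balancedness condition~\eqref{eq:thm:pre:extendedp:asmpt1}.

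Writing $m = q - \ell + 1$ and $T = \max\{ct^{1/m}, K\}$, I would take $D_j = D$ throughout and $R_j = \max\{\lambda^{q-j}, D\}$ with $\lambda = C_0(T + q)$, where the constant $C_0 = C_0(\alpha, k, q)$ is chosen large enough that $\alpha\lambda/k \ge q + j + cT$ for all $\ell \le j < q$. The parameters $S_j$ are chosen so that $Q_j = \max\{S_j, D\}$ is a sufficiently large constant (depending on $\alpha, k, D, q$) to force the inner bracket $N^j(e\mu_j/\ceil{Q_j})^{\ceil{Q_j}/(4kD)}$ in~\eqref{eq:thm:sparse:extended} to be at most $N^{-q-1}$, while keeping $Q_\ell$ itself bounded by a constant. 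With these choices, the main term of~\eqref{eq:thm:sparse:iter:extended} is bounded via~\eqref{eq:thm:e:extended} by $\exp(-c\varphi(t/\mu)\mu)$, giving the Chernoff part of~\eqref{eq:thm:pre:extended:small}. The degree terms $\Pr_{j,1}, \Pr_{j,2}$ are bounded via~\eqref{eq:thm:deg:extended} (using $\mu_j \le AN^{-\alpha}$ together with $R_j/R_{j+1}, Q_j/S_{j+1} \ge \lambda$) by $N^{j-\alpha\lambda/k} \le N^{-q-cT}$; and the sparsification terms $\Pr_{j,3,\ell}$ are bounded via~\eqref{eq:thm:sparse:extended} by $N^{-(q+1)M}$, where the outer exponent $M = \ceil{t/(\binom{k}{\ell}\cdot 2Lq\cdot \ceil{Q_j} R_\ell)}$ is at least $1$ always and of order $t/R_\ell$, which is of order $T$ when $t$ is large. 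Taking $c$ small enough in terms of $K$ ensures $N^{-(q+1)M} \le N^{-q-cT}$ uniformly in $t$.

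Summing the at most $3(q-\ell) \le 2q - 1$ degree and sparsification terms and combining with the main term yields
\[
\Pr(X(\cH) \ge \mu + t) \le \exp(-c\varphi(t/\mu)\mu) + 2qN^{-q}\exp(-T \log N) \le (1 + 2qN^{-q})\exp\bigl(-\min\{c\varphi(t/\mu)\mu,\, T\log N\}\bigr),
\]
which is the first inequality of~\eqref{eq:thm:pre:extended:small}; the second follows from $\varphi(x) \ge \min\{x^2, x\}/3$ (Remark~\ref{rem:varphi}). The main obstacle is the joint tuning of $\lambda$ and $(S_j)$: $\lambda$ must grow linearly in $T$ so that~\eqref{eq:thm:deg:extended} supplies an $N^{-\alpha\lambda/k}$ factor strong enough to absorb $N^j$ with an additional $N^{-q-cT}$ to spare, while the $S_j$ must keep $Q_\ell$ constant-bounded (for the Chernoff main term) yet keep $Q_j$ for $j > \ell$ large enough that the inner bracket in~\eqref{eq:thm:sparse:extended} is at most $N^{-q-1}$; the floor $K$ in $T$ is what keeps $\lambda$ (and hence the outer exponent $M$) bounded below uniformly in $t$, amplifying this single-power bracket into the desired $N^{-cT}$ factor.
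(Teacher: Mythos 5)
Your overall strategy matches the paper's: apply the combinatorial decomposition (Theorem~\ref{thm:sparse:iter:extended}), bound the resulting probabilities via Theorem~\ref{thm:prob:extended}, use $R_j \approx \lambda^{q-j}$ with $\lambda$ tuned to grow like $\max\{t^{1/(q-\ell+1)},K\}$ so the degree terms acquire an $N^{-T}$ decay, keep $Q_\ell = O(1)$ for the Chernoff main term, and finish with Remark~\ref{rem:varphi}. (The paper does this via the intermediate Theorem~\ref{thm:extended:small}, but that is an organizational rather than substantive difference.)

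However, the specific parametrization $D_j = D$ for all $j$ does not work, and this is not a cosmetic slip. In Theorem~\ref{thm:sparse:iter:extended} the term $\Pr_{j,3,\ell}$ carries the indicator $\indic{Q_j < R_j \text{ and } Q_{j+1}=D_{j+1}}$, while $\Pr_{j,2}$ carries $\indic{Q_j<R_j \text{ and } Q_{j+1}>D_{j+1}}$. If you take all $D_j=D$ and make $Q_j$ a ``sufficiently large constant'' for $j>\ell$ (so $Q_j>D=D_j$), then for every $j<q-1$ you have $Q_{j+1}>D_{j+1}$, which switches off $\Pr_{j,3,\ell}$ but switches \emph{on} $\Pr_{j,2}$. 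To kill $\Pr_{j,2}$ via~\eqref{eq:thm:deg:extended} you invoke $Q_j/S_{j+1}\ge\lambda$, but $\lambda$ grows unboundedly with $t$, so this forces $S_{j+1}\to 0$, which forces $Q_{j+1}=\max\{S_{j+1},D\}=D$ --- contradicting the requirement that $Q_{j+1}$ be a large constant. So the two demands you place on $S_{j+1}$ (keep $Q_{j+1}$ large; keep $Q_j/S_{j+1}\ge\lambda$) are incompatible once $t$ is large. The paper avoids the dilemma by taking $D_j=S_j=B^{q-j}D$ with $B=B(\alpha,k,q,K,A,D)$ a fixed large constant. This makes $Q_j=D_j$ identically, so $Q_{j+1}=D_{j+1}$ always, hence every $\Pr_{j,2}$ vanishes; and it automatically gives $Q_j/D_{j+1}=B$, which is exactly the large \emph{ratio} needed in~\eqref{eq:thm:sparse:extended} so that the inner bracket in $\Pr_{j,3,\ell}$ is $\le N^{-q-\Theta(B)}$. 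Adopting the geometric choice $D_j=S_j=B^{q-j}D$ (rather than $D_j\equiv D$) repairs the argument and keeps the rest of your reasoning intact. (Minor side note: the count ``$3(q-\ell)\le 2q-1$'' is arithmetically false in general; in fact at most $2(q-\ell)\le 2q$ of the terms are nonzero, matching the paper's $b=2q$.)
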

\noindent 
The inductive approaches~\cite{Vu2000,DL} yield variants of~\eqref{eq:thm:pre:extended:small} 
where $\max\bigl\{ct^{1/(q-\ell+1)},K\bigr\}$ is qualitatively replaced by~$K$ (see, e.g., Corollary~4.10 in~\cite{DL}). 
For~$K$ large enough this gives bounds of the form $\Pr(X(\cH) \ge (1+\eps)\mu) \le N^{-\beta}$ for $\mu \ge C(\eps,d,\beta) \log n$, and 
$\Pr(X(\cH) \ge (1+\eps)\mu) \le \exp(-d \eps^2 \mu)$ for $\mu \le \log n$ and $\eps \le 1$, say (see, e.g., Corollaries~4.11--4.12 in~\cite{DL}). 
To illustrate assumption~\eqref{eq:thm:pre:basic:small:asmpt}, for subgraph counts in~$G_{n,p}$ with $p = O(n^{-v/e + \sigma})$,  
the setup of Example~\ref{ex:EE} (with $\ell=1$, $q=k=e$ and $N=n^2$) yields $\mu=O(n^{e \sigma})$ and  
\begin{equation*}
\max_{\ell \le j < q}\mu_j \le O(\sum_{J \subseteq H: 1 \le e_J < e}n^{v-v_J}p^{e-e_J}) \le O(\sum_{J \subseteq H: 1 \le e_J < e}n^{e_Jv/e-v_J + \sigma(e-e_J)}) , 
\end{equation*}
which for `strictly balanced' graphs is well-known to be $O(n^{-\sigma/2})$ for sufficiently small $\sigma>0$ 
(the details are deferred to~\eqref{eq:density:JH:0} and~\eqref{eq:density:muj:2} in Section~\ref{sec:applications:RG}; see also Claim~6.2 in~\cite{Vu2002}).

\subsection{More general tail inequalities}\label{sec:extended}
In this section we state some more general upper tail inequalities which 
(i)~mimic the heuristic discussion of Section~\ref{sec:form},
and (ii)~are easier to compare with the work of Kim--Vu/Janson--Ruci{\'n}ski~\cite{KimVu2000,Vu2000,Vu2002,DL};  
the proofs are deferred to Section~\ref{sec:prob:thm}. 
Readers primarily interested in applications may proceed to Section~\ref{sec:applications}.

We start with a rigorous analogue of the basic upper tail inequality~\eqref{heur:ind} from Section~\ref{sec:form}, which is inspired by very similar classical results 
for the special case $\cG=\cH_p$ with $\Delta_q(\cH) \le D$ (see, e.g., Theorem~3.10 in~\cite{DL} and Theorem~4.2 in~\cite{Vu2002}). 
In applications convenient choices of the parameters $(R_j)_{\ell \le j < q}$ and~$D$ are often of form $D=\Theta(1)$, $R_j = \lambda^{q-j}D$ and $\lambda = B\max\{\mu^{1/(q-\ell+1)},1\}$, so that in~\eqref{eq:cl:basic} we have $\min\{\mu/R_{\ell} = \Theta(\lambda)$ and $R_j/R_{j+1} = \lambda$ when $\mu \ge 1$ (see, e.g., the proof of Corollary~6.3 in~\cite{Vu2002} or Theorem~2.1 in~\cite{Vu2001}). 
\begin{claim}[Basic upper tail inequality]%
\label{cl:basic}
Given $\cH$ with $1 \le \ell \le q \le k$, assume that (H$\ell$) and (P) hold. 
Suppose that $t>0$. Given positive $(R_j)_{\ell \le j < q}$ and $D$, let $R_q=D$. 
If inequality 
\begin{equation}\label{eq:cl:basic:asmpt}
\left(\frac{e \mu_j}{R_j}\right)^{R_j/R_{j+1}}  \le N^{-4kj} 
\end{equation}
holds for all $\ell \le j < q$, then there are $a,b>0$ (depending only on $\ell,k,L$) such that 
\begin{equation}\label{eq:cl:basic}
\begin{split}
& \Pr(X(\cG) \ge \mu+t \text{ and } \Delta_q(\cG) \le D \text{ for some }  \cG \subseteq \cH_p) \\
& \qquad \le \exp\left(-\frac{a\varphi(t/\mu)\mu}{R_\ell}\right) + \sum_{\ell \le j < q}N^{-j}\left(\frac{e \mu_j}{R_j}\right)^{bR_j/R_{j+1}} . 
\end{split}
\end{equation}
\end{claim}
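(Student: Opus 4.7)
\emph{Proof strategy.} The plan is to combine an iterative degree decomposition (in the spirit of~\eqref{eq:iteration:2}) with the probabilistic estimates of Theorem~\ref{thm:prob:extended}; assumption~\eqref{eq:cl:basic:asmpt} enters only at the very end, to absorb the $N^j$ factors coming from~\eqref{eq:thm:deg:extended}. Notably, none of the combinatorial sparsification machinery from Section~\ref{sec:spars} is needed, so Claim~\ref{cl:basic} should be viewed as a rigorous analogue of the classical inductive estimates rather than as an application of the new ideas.

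\emph{Step 1 (deterministic decomposition).} I would first extend the observation underlying~\eqref{eq:iteration:2} to the general setup, as follows. Suppose some $\cG \subseteq \cH_p$ satisfies $X(\cG) \ge \mu+t$ together with $\Delta_q(\cG) \le D = R_q$. Either $\Delta_\ell(\cG) \le R_\ell$ (in which case the same $\cG$ witnesses the first event below), or the largest index $j \in \{\ell, \ldots, q-1\}$ with $\Delta_j(\cG) > R_j$ automatically satisfies $\Delta_{j+1}(\cG) \le R_{j+1}$ (by maximality of $j$, or because $j+1 = q$ with $\Delta_q(\cG) \le R_q$). A union bound then yields
\begin{align*}
& \Pr\bigl(X(\cG) \ge \mu+t \text{ and } \Delta_q(\cG) \le D \text{ for some } \cG \subseteq \cH_p\bigr) \\
& \qquad \le \Pr\bigl(X(\cG) \ge \mu+t \text{ and } \Delta_\ell(\cG) \le R_\ell \text{ for some } \cG \subseteq \cH_p\bigr) \\
& \qquad \quad + \sum_{\ell \le j < q}\Pr\bigl(\Delta_j(\cG) > R_j \text{ and } \Delta_{j+1}(\cG) \le R_{j+1} \text{ for some } \cG \subseteq \cH_p\bigr).
\end{align*}

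\emph{Step 2 (probabilistic estimates).} For the first term I would use $\{X(\cG) \ge \mu+t\} \subseteq \{X(\cG) \ge \mu+t/2\}$ and apply~\eqref{eq:thm:e:extended} with $y = R_\ell$, which yields the claimed $\exp(-a\varphi(t/\mu)\mu/R_\ell)$ term with $a = 1/\bigl(4L\binom{k}{\ell}\bigr)$. For each $j$-th degree term I would apply~\eqref{eq:thm:deg:extended} with $x=R_j$ and $y=R_{j+1}$, obtaining $N^j(e\mu_j/R_j)^{R_j/(kR_{j+1})}$. Absorbing the $N^j$ factor is the only (mild) obstacle: assumption~\eqref{eq:cl:basic:asmpt} forces $e\mu_j/R_j \le 1$ (we may assume $N \ge 1$), so raising it to the power $1/(2k)$ preserves the inequality and gives $(e\mu_j/R_j)^{R_j/(2kR_{j+1})} \le N^{-2j}$. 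Splitting $R_j/(kR_{j+1})$ into two equal halves then yields
\[
N^j \left(\frac{e\mu_j}{R_j}\right)^{R_j/(kR_{j+1})} \le N^j \cdot N^{-2j} \cdot \left(\frac{e\mu_j}{R_j}\right)^{R_j/(2kR_{j+1})} = N^{-j}\left(\frac{e\mu_j}{R_j}\right)^{bR_j/R_{j+1}}
\]
with $b = 1/(2k)$, matching the sum in~\eqref{eq:cl:basic}. The exponent $4kj$ in~\eqref{eq:cl:basic:asmpt} is precisely calibrated to make this splitting work (with room to spare), so beyond this piece of algebraic bookkeeping the argument is pure assembly of existing ingredients.
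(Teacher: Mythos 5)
Your proposal is correct and matches the paper's approach: the paper proves Claim~\ref{cl:basic} by running the proof of Theorem~\ref{thm:extended} with $s=1$ and $D_j=R_j$, which makes $Q_j=R_j$ and annihilates the sparsification terms $\Pr_{j,2},\Pr_{j,3,\ell}$, so that the decomposition from Theorem~\ref{thm:sparse:iter:extended} collapses to exactly the iterative degree split you wrote out in Step~1, and the remaining probabilistic step (applying~\eqref{eq:thm:e:extended} and~\eqref{eq:thm:deg:extended} and then using~\eqref{eq:cl:basic:asmpt} to split the exponent $R_j/(kR_{j+1})$ in half and absorb $N^{j}$ into $N^{-j}$ with $b=1/(2k)$) is identical to the manipulation in~\eqref{eq:thm:basic:pj1pj2}.
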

\noindent 
To familiarize the reader with the form of assumption~\eqref{eq:cl:basic:asmpt} and inequality~\eqref{eq:cl:basic}, it is instructive to briefly relate them to work of Kim and Vu~\cite{KimVu2000,Vu2001,Vu2002}.
Theorem~4.2 in~\cite{Vu2002} qualitatively sets $t=\sqrt{\lambda \mu R_\ell}$, and (in our notation)
its parametrization assumes roughly $\Delta_q(\cH) \le D = R_q$, $\mu/R_{\ell} \ge \lambda = \omega(\log N)$, as well as $R_j \ge 2e \mu_j$ and $R_{j}/R_{j+1} \ge \lambda$ for all $\ell \le j < q$, say.  
In this case $(e\mu_j/R_j)^{R_j/R_{j+1}} \le 2^{-\lambda} = N^{-\omega(1)}$ follows, so assumption~\eqref{eq:cl:basic:asmpt} holds. 
We also have $t= \mu \sqrt{\lambda R_\ell/\mu} \le \mu$, so that Remark~\ref{rem:varphi} yields $\varphi(t/\mu)\mu/R_{\ell} \ge t^2/(3\mu R_{\ell}) = \lambda/3$, say. 
Recalling $\Delta_q(\cH) \le D$, for suitable $C=C(q)$ and $c=c(a,b)$ it follows that~\eqref{eq:cl:basic} yields 
\begin{equation}\label{eq:cl:basic:heur}
\Pr(X(\cH_p) \ge \mu+t) \le \exp\bigl(-a\lambda/3\bigr) + \indic{q > \ell} q N^{-\ell} 2^{-b\lambda} \le C \exp\bigl(-c\lambda\bigr) ,
\end{equation}
which is of similar form as~\eqref{heur:ind:2} or Theorem~4.2~\cite{Vu2002}.

We now state our improved variant\footnote{Note that by setting $s=1$ and $D_j=R_j$ we have $Q_j=R_j$ in~\eqref{eq:thm:basic:Qj}, so the indicators in~\eqref{eq:thm:basic:asmpt}--\eqref{eq:thm:extended} are zero and Theorem~\ref{thm:extended} 
recovers Claim~\ref{cl:basic} up to irrelevant constant factors.} of Claim~\ref{cl:basic}, which corresponds to a rigorous analogue of the upper tail inequality~\eqref{heur:iter} from Section~\ref{sec:form}. 
Convenient choices of the parameters $(R_j)_{\ell \le j < q}$ and $(D_j)_{\ell \le j \le q}$ 
are often of form $D_j = B^{q-j}D_q = \Theta(1)$, $R_j = \lambda^{q-j}D_q$ and $\lambda = B\max\{\mu^{1/(q-\ell+1)},1\}$, so that in~\eqref{eq:thm:extended} we have $R_j/R_{j+1} = \lambda$ and $t/R_{\ell} = \Theta(\lambda)$ when $t = \Theta(\mu)$ and $\mu \ge 1$.  
One key novelty of~\eqref{eq:thm:extended} is the $\mu/Q_{\ell} = \min\{\mu s/R_\ell,\mu/D_\ell\}$ term, which intuitively allows us to sharpen inequality~\eqref{eq:cl:basic} whenever $R_j = \omega(\mu_j)$ holds (by using $s=\omega(1)$ in~\eqref{eq:thm:basic:Qj}, so that usually $\mu/Q_\ell = \omega(\mu/R_\ell)$ in~\eqref{eq:thm:extended}, say). 
\begin{theorem}[Extended upper tail inequality]%
\label{thm:extended}
Given $\cH$ with $1 \le \ell \le q \le k$, assume that (H$\ell$) and (P) hold.  
Suppose that $s \ge 1$ and $t>0$. 
Given positive $(R_j)_{\ell \le j < q}$ and $(D_j)_{\ell \le j \le q}$ with $R_j \ge D_j$, define 
\begin{equation}\label{eq:thm:basic:Qj}
Q_j =\max\{R_j/s, D_j\} 
\end{equation}
for $\ell \le j < q$, and $R_q=Q_q=D_q$.  
If inequality
\begin{equation}\label{eq:thm:basic:asmpt}
\max\left\{\left(\frac{e \mu_j}{Q_j}\right)^{R_j/R_{j+1}}, \; \indic{Q_j < R_j \text{ and } Q_{j+1}=D_{j+1}} \left(\frac{e \mu_j}{Q_j}\right)^{Q_j/D_{j+1}}\right\} \le N^{-4kj} 
\end{equation}
holds for all $\ell \le j < q$,  
then for $a=1/\bigl(4L\binom{k}{\ell}\bigr)$, $b=1/(2k)$ and $d=1/\bigl(4Lqk \binom{k}{\ell}\bigr)$ we have 
\begin{equation}\label{eq:thm:extended}
\begin{split}
& \Pr(X(\cG) \ge \mu+t \text{ and } \Delta_q(\cG) \le D_q \text{ for some }  \cG \subseteq \cH_p) \\
& \qquad \le  \exp\left(-\frac{a\varphi(t/\mu)\mu}{Q_\ell}\right)  + 2\sum_{\ell \le j < q}N^{-j}\left(\frac{e \mu_j}{Q_j}\right)^{bR_j/R_{j+1}} \\
& \qquad \quad\quad + \sum_{\ell \le j < q}\indic{Q_j < R_j \text{ and }  Q_{j+1}=D_{j+1}}N^{-j}\left(\frac{e \mu_j}{Q_j}\right)^{\max\bigl\{dt/(R_\ell D_{j+1}), \; b Q_j/D_{j+1}\bigl\}}.
\end{split}
\end{equation}
\end{theorem}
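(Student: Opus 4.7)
The plan is to combine the combinatorial decomposition in Theorem~\ref{thm:sparse:iter:extended} with the probabilistic estimates in Theorem~\ref{thm:prob:extended}, choosing $S_j := R_j/s$ for $\ell \le j < q$ so that $Q_j = \max\{S_j,D_j\}$ matches the definition~\eqref{eq:thm:basic:Qj}. Because $s \ge 1$ and $R_j \ge D_j$, we have $Q_\ell \le R_\ell$ and hence $\min\{Q_\ell,R_\ell\}=Q_\ell$, so Theorem~\ref{thm:sparse:iter:extended} bounds the left hand side of~\eqref{eq:thm:extended} by a main probability (with constraint $\Delta_\ell(\cG) \le Q_\ell$) together with sums of $\Pr_{j,1}$, $\Pr_{j,2}$, $\Pr_{j,3,\ell}$ as defined in~\eqref{eq:thm:sparse:pj1}--\eqref{eq:thm:sparse:pj3}. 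The main probability is handled by~\eqref{eq:thm:e:extended} with $y = Q_\ell$ together with the routine inequality $\varphi(t/(2\mu)) \ge \varphi(t/\mu)/4$, producing the first term on the right hand side of~\eqref{eq:thm:extended}.

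For $\Pr_{j,1}$, I would apply~\eqref{eq:thm:deg:extended} with $x = R_j$ and $y = R_{j+1}$, relax the base from $e\mu_j/R_j$ to $e\mu_j/Q_j \le 1$ using $Q_j \le R_j$ (and noting that the first clause of~\eqref{eq:thm:basic:asmpt} forces $e\mu_j/Q_j < 1$), and split the exponent $R_j/(kR_{j+1})$ into two equal halves of $bR_j/R_{j+1}$ with $b = 1/(2k)$; one half absorbs the $N^j$ prefactor via $(e\mu_j/Q_j)^{bR_j/R_{j+1}} \le N^{-2j}$ (from the first clause of~\eqref{eq:thm:basic:asmpt}), and the other half produces the bound appearing in the sum. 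For $\Pr_{j,2}$ (nontrivial only when $Q_{j+1} = S_{j+1} = R_{j+1}/s$), I would apply~\eqref{eq:thm:deg:extended} with $x = Q_j$ and $y = R_{j+1}/s$; since $sQ_j \ge R_j$, the resulting exponent $sQ_j/(kR_{j+1})$ is at least $R_j/(kR_{j+1})$, so the same absorption argument applies, and summing both contributions yields the factor~$2$ in~\eqref{eq:thm:extended}.

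For $\Pr_{j,3,\ell}$ (where the indicator $\indic{Q_j<R_j,\, Q_{j+1}=D_{j+1}}$ is $1$), I would apply~\eqref{eq:thm:sparse:extended} with $x = Q_j$, $r = t/(2Lq)$, $y = D_{j+1}$, $z = R_\ell$, obtaining $\Pr_{j,3,\ell} \le B^M$ where $B = N^j(e\mu_j/\lceil Q_j\rceil)^{\lceil Q_j\rceil/(4kD_{j+1})}$ and $M = \lceil t/(2Lq\binom{k}{\ell}\lceil Q_j\rceil R_\ell)\rceil \ge 1$. Here I would split on which term attains the maximum $E := \max\{dt/(R_\ell D_{j+1}),\, bQ_j/D_{j+1}\}$: when $t < 2Lq\binom{k}{\ell}Q_j R_\ell$, the threshold forces $M=1$ and $E = bQ_j/D_{j+1}$, and then the second clause of~\eqref{eq:thm:basic:asmpt} (raised to appropriate fractional powers) absorbs $N^j$ to leave $N^{-j}(e\mu_j/Q_j)^{bQ_j/D_{j+1}}$; when $t \ge 2Lq\binom{k}{\ell}Q_j R_\ell$, so that $M \ge t/(2Lq\binom{k}{\ell}Q_j R_\ell)$ and $E = dt/(R_\ell D_{j+1})$, I would expand $B^M = N^{jM}(e\mu_j/Q_j)^{MQ_j/(4kD_{j+1})}$ (up to harmless ceiling effects) and verify by a direct exponent calculation that the ratio of these two quantities to the target $N^{-j}(e\mu_j/Q_j)^{dt/(R_\ell D_{j+1})}$ is at most~$1$, the leftover part of the $(e\mu_j/Q_j)$ exponent being used, through the second clause of~\eqref{eq:thm:basic:asmpt}, to simultaneously cancel $N^{j(M+1)}$.

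The main obstacle is this last case analysis: the constants $a,b,d$ and the $N^{-4kj}$ threshold in~\eqref{eq:thm:basic:asmpt} must be tuned precisely so that the assumption provides just enough decay in the base $B$ to absorb the accumulating $N^{jM}$ factors across arbitrary $M \ge 1$, both in the small-$t$ regime (where the indicator-type exponent $bQ_j/D_{j+1}$ in~\eqref{eq:thm:extended} emerges) and in the large-$t$ regime (where the $t$-dependent exponent $dt/(R_\ell D_{j+1})$ emerges). The small ceiling discrepancies between $Q_j$ and $\lceil Q_j\rceil$ cause no trouble because $e\mu_j/Q_j \le 1$ implies that replacing $Q_j$ by $\lceil Q_j\rceil$ only strengthens the relevant estimates.
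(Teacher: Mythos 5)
Your plan follows the paper's proof quite faithfully: combine Theorem~\ref{thm:sparse:iter:extended} (with $S_j := R_j/s$ so that $Q_j = \max\{S_j,D_j\}$) with the estimates~\eqref{eq:thm:e:extended}--\eqref{eq:thm:sparse:extended} of Theorem~\ref{thm:prob:extended}, and your treatment of the main term and of $\Pr_{j,1}+\Pr_{j,2}$ reproduces the paper's computations. One small slip: inequality~\eqref{eq:thm:e:extended} is stated for the event $\{X(\cG)\ge\mu+t/2\}$ but already has $\varphi(t/\mu)$ (not $\varphi(t/(2\mu))$) in the exponent; the bound $\varphi(t/(2\mu))\ge\varphi(t/\mu)/4$ is used \emph{inside} its proof and should not be applied again on top of it, which would cost another factor of four.

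The genuine gap is in the last step, which you yourself flag as the main obstacle but do not carry out. Taking~\eqref{eq:thm:sparse:extended} literally with inner exponent $\lceil x\rceil/(4ky)$, the base $B = N^{j}\bigl(e\mu_j/\lceil Q_j\rceil\bigr)^{\lceil Q_j\rceil/(4kD_{j+1})}$ does \emph{not} split the way you need: the second clause of~\eqref{eq:thm:basic:asmpt} gives $\bigl(e\mu_j/Q_j\bigr)^{Q_j/(4kD_{j+1})}\le N^{-j}$, which is exactly enough to cancel the $N^j$ prefactor in $B$ and leaves nothing of the exponent for the useful piece. Already your $M=1$ sub-case fails: one would need $N^{2j}\le (Q_j/(e\mu_j))^{Q_j/(4kD_{j+1})}$, but the assumption only gives that the right-hand side is $\ge N^j$. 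The resolution is that the $4$ in~\eqref{eq:thm:sparse:extended} is a typo: the proof of Theorem~\ref{thm:prob:extended} states that the argument for~\eqref{eq:thm:sparse} ``carries over,'' and~\eqref{eq:thm:sparse} (basic setup) has $\lceil x\rceil/(ky)$; the paper's proof of Theorem~\ref{thm:extended} accordingly works with $\Pi := N^j(e\mu_j/\lceil Q_j\rceil)^{\lceil Q_j\rceil/(kD_{j+1})}$. With $ky$ in place of $4ky$ your splitting/absorption argument closes: one half of the exponent gives $\bigl(e\mu_j/Q_j\bigr)^{\lceil Q_j\rceil/(2kD_{j+1})}\le N^{-2j}$, so $\Pi^M\le N^{-jM}\bigl(e\mu_j/Q_j\bigr)^{\lceil Q_j\rceil M/(2kD_{j+1})}\le N^{-j}\bigl(e\mu_j/Q_j\bigr)^{\lceil Q_j\rceil M/(2kD_{j+1})}$ using $M\ge 1$, and then $\lceil Q_j\rceil M/(2kD_{j+1})\ge\max\bigl\{dt/(R_\ell D_{j+1}),\,bQ_j/D_{j+1}\bigr\}$ because $\lceil Q_j\rceil$ cancels in the first argument of the max and $\lceil Q_j\rceil\ge Q_j$ handles the second. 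In short: same approach as the paper, but as written your proposal inherits a typo that makes the promised direct exponent calculation fail with the stated constants; you should prove (or at least remark on) the corrected form of~\eqref{eq:thm:sparse:extended} before using it here.
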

\noindent 
To illustrate Theorem~\ref{thm:extended}, in the applications of Sections~\ref{sec:proof:easy} and~\ref{sec:applications:RISH} we have $e\mu_j/R_j \le p^{\alpha}/e$ with $p \in (0, 1]$, in which case $s = \log(e/p^{\alpha/2})$ is a convenient choice. 
Indeed, $x \log(e/x) \le 1$ then implies $e\mu_j/Q_j \le e\mu_j s/R_j \le p^{\alpha/2}/e = e^{-s}$. 
We thus think of the~\eqref{eq:thm:basic:asmpt} as a minor variant of the assumption~\eqref{eq:cl:basic:asmpt} from Claim~\ref{cl:basic} (note that $e\mu_j/R_j \le e^{-s}$ holds, and that $Q_j < R_j$ implies $Q_j=R_j/s$).  
Using $D_j = \Theta(1)$ and the additional Kim--Vu type assumptions discussed below Claim~\ref{cl:basic}, we now review inequality~\eqref{eq:thm:extended} of Theorem~\ref{thm:extended}. 
Since~$1/Q_\ell = \min\{s/R_\ell, 1/D_\ell\}$, 
using $t/R_{\ell} = \sqrt{\lambda \mu/R_{\ell}} \ge \lambda$ we obtain analogous to~\eqref{eq:cl:basic:heur} an estimate of the form   
\begin{equation}\label{eq:thm:extended:heur}
\begin{split}
\Pr(X(\cH_p) \ge \mu+t) & \le \exp\bigl(-\tilde{a}\min\{t^2/\mu, \lambda s\}\bigr) + \indic{q > \ell}3qN^{-\ell}e^{-\tilde{d}\lambda s} \\
& \le C \exp\bigl(-c\min\{t^2/\mu, \lambda \log(e/p)\}\bigr) .
\end{split}
\end{equation}
If $q > \ell$ then $t^2/\mu = \lambda R_{\ell} \ge \lambda^{q-\ell+1}R_{q} = \omega(\lambda \log N)$, so~\eqref{eq:thm:extended:heur} usually decays like $C\exp(- c\lambda \log(e/p))$. 
When $\lambda \approx \mu^{1/(q-\ell+1)}$ or $t=\eps \mu$ we similarly see that~\eqref{eq:thm:extended:heur} decays like $C\exp(-c\min\{\mu, \lambda \log(e/p)\})$.  
In all these cases we thus improve the exponential decay of the classical bound~\eqref{eq:cl:basic:heur} by an extra logarithmic factor.

The following upper tail inequality for polynomially small~$\mu_j$ is a minor extension of Theorem~\ref{thm:pre:extended:small}.  
Note that~\eqref{eq:thm:extended:small} decays exponentially in $\min\{t^2/\mu,t^{1/(q-\ell+1)}\log N\}$ for $1 \le t \le O(\mu)$, 
which seems quite informative when $\mu=\Theta(\Var X(\cH))$ holds (i.e., in the Poisson range). 
\begin{theorem}[Upper tail inequality: the small expectations case]%
\label{thm:extended:small}
Given $\cH$ with $1 \le \ell \le q \le k$, assume that (H$\ell$) and (P) hold. 
If there are $A,\alpha > 0$ such that inequality~\eqref{eq:thm:pre:basic:small:asmpt} holds, then for $t,K>0$ we have 
\begin{equation}\label{eq:thm:extended:small}
\begin{split}
& \Pr(X(\cG) \ge \mu+t \text{ and } \Delta_q(\cG) \le D \text{ for some } \cG \subseteq \cH_p) \\
& \qquad \le  \exp\bigl(-a\varphi(t/\mu)\mu\bigr)  + \indic{q > \ell}2q N^{-q} \exp\left(-\max\{bt^{1/(q-\ell+1)}, \; K\}\log N\right),
\end{split}
\end{equation}
where $a,b>0$ depend only on $\ell,q,k,L,D,A,\alpha,K$. 
\end{theorem}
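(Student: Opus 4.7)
The plan is to derive~\eqref{eq:thm:extended:small} as a specialization of Theorem~\ref{thm:extended}, using the polynomial decay $\mu_j \le A N^{-\alpha}$ of~\eqref{eq:thm:pre:basic:small:asmpt} to absorb the $N^{-j}$ prefactors into an overall $N^{-q}$ prefactor while generating both the $t^{1/(q-\ell+1)}\log N$ and $K\log N$ pieces of the decay. If $q=\ell$ the sums in~\eqref{eq:thm:extended} are empty and $Q_\ell=D_q=D$, so the first term alone yields the claim; henceforth I assume $q>\ell$.

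First I would introduce a scale parameter
\[
\lambda = c_0 \max\bigl\{t^{1/(q-\ell+1)},\, K'\bigr\}
\]
with constants $c_0, K' > 0$ (depending on $\ell,q,k,L,D,A,\alpha,K$) to be chosen large enough below, and set $R_j = \lambda^{q-j}D$ for $\ell \le j \le q$, so that $R_j/R_{j+1} = \lambda$. I would then fix a strictly decreasing sequence of positive constants $D_\ell > D_{\ell+1} > \cdots > D_q = D$ whose ratios $D_j/D_{j+1}$ are large enough that $\alpha b D_j/D_{j+1}$ exceeds both $4kq$ and $K+q$, and take $s = R_\ell/D_\ell$. Then $Q_\ell = D_\ell$ and, for $\lambda$ bigger than a constant depending on the $D_j$'s, also $Q_j = D_j$ for all $\ell \le j \le q$; consequently $Q_j < R_j$ and $Q_{j+1}=D_{j+1}$ hold for every $\ell \le j < q$, so the indicator in the third sum of~\eqref{eq:thm:extended} is always~$1$.

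With these choices $e\mu_j/Q_j \le (eA/D_j)N^{-\alpha} \le N^{-\alpha/2}$ for $N$ large, and both inequalities in~\eqref{eq:thm:basic:asmpt} reduce to bounds of the form $(N^{-\alpha/2})^{\mathrm{exponent}} \le N^{-4kj}$, which hold because either the exponent is $R_j/R_{j+1}=\lambda$ (large via $c_0,K'$) or $Q_j/D_{j+1}=D_j/D_{j+1}$ (large by construction). I would then bound the three terms of~\eqref{eq:thm:extended} separately: the first contributes $\exp(-(a/D_\ell)\varphi(t/\mu)\mu)$; the second is at most $2(q-\ell)N^{-\ell}(N^{-\alpha/2})^{b\lambda}$, which for $c_0,K'$ large enough absorbs the $N^{-q}$ prefactor and leaves an $\exp(-\max\{b't^{1/(q-\ell+1)},K\}\log N)$ factor; in the third, the exponent $\max\{dt/(R_\ell D_{j+1}),\, bD_j/D_{j+1}\}$ gives a contribution $\gtrsim t^{1/(q-\ell+1)}$ from the first branch when $t^{1/(q-\ell+1)} \ge K'$ (since $t/R_\ell = t/(\lambda^{q-\ell}D) \gtrsim t^{1/(q-\ell+1)}$ by the choice of $\lambda$) and at least $K+q$ from the second branch, again producing the required $N^{-q}\exp(-\max\{\ldots\}\log N)$ bound.

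The main obstacle is verifying the second branch of~\eqref{eq:thm:basic:asmpt}: with $Q_j=D_j$ a constant, the exponent $D_j/D_{j+1}$ must itself be large enough to make $(N^{-\alpha/2})^{D_j/D_{j+1}}$ beat both the $N^{-4kj}$ demand of~\eqref{eq:thm:basic:asmpt} and the $K\log N$ decay demanded by the conclusion; this rules out a single universal $D_j$ and forces the rapidly decreasing sequence above, whose ratio depends on $\alpha,k,q,K$. Combined with the twofold role of $\lambda$ (supplying $t^{1/(q-\ell+1)}\log N$ decay when $t$ is large, and the baseline $K\log N$ decay when $t$ is small), this is what dictates the $\max\{t^{1/(q-\ell+1)},K'\}$ form of $\lambda$.
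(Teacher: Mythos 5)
Your proposal is correct and follows essentially the same strategy as the paper: set $R_j=\lambda^{q-j}D$ with $\lambda$ of the form $\max\{t^{1/(q-\ell+1)},\text{const}\}$, take $D_j$ a geometrically decreasing sequence with large ratios so that $Q_j=D_j$ stays constant and $\Pr_{j,2}=0$, check assumption~\eqref{eq:thm:basic:asmpt}, and then read off the three terms. The only difference is organizational: you route through Theorem~\ref{thm:extended}, while the paper invokes the underlying Theorems~\ref{thm:sparse:iter:extended} and~\ref{thm:prob:extended} directly (with $S_j:=D_j$), which avoids having to fit the $s$-parametrization $S_j = R_j/s$ and the redundant requirement $R_j\ge D_j$; this is a negligible distinction. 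One small nit: your phrase ``$e\mu_j/Q_j\le N^{-\alpha/2}$ for $N$ large'' is unnecessary and would formally leave a gap since the theorem is claimed for all $N\ge v(\cH)$; as the paper does, simply take $D_\ell\ge Ae$ (so $e\mu_j/Q_j\le\mu_j/A\le N^{-\alpha}$ for all $N\ge 1$), which you already have the freedom to do by starting the decreasing sequence high enough.
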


\subsection{Proofs}\label{sec:prob:thm}
\subsubsection{Proofs of Claim~\ref{cl:basic} and Theorems~\ref{thm:extended}--\ref{thm:extended:small}}\label{sec:prob:thm:basic}
Combining Theorem~\ref{thm:sparse:iter} and~\ref{thm:prob}, by setting $S_j = R_j/s$ the proof of Theorem~\ref{thm:extended} is straightforward. 
\begin{proof}[Proof of Theorem~\ref{thm:extended}]
We first consider the special case $q=\ell$. Since $R_q=D_q$, using $s \ge 1$ we thus infer $\max\{R_\ell/s,D_\ell\}=D_\ell=R_\ell$. 
Hence~\eqref{eq:thm:e:extended} of Theorem~\ref{thm:prob:extended} readily implies~\eqref{eq:thm:extended}. 

In the remainder we focus on the more interesting case $q > \ell$.
Analogous to the proof of Theorem~\ref{thm:prob}, inequality~\eqref{eq:thm:extended} is trivial when $N < 1$ (the left hand side is zero).
So we henceforth may assume $N \ge 1$, and using the assumption~\eqref{eq:thm:basic:asmpt} it follows that $Q_j \ge e \mu_j$. 
Let $S_j = R_j/s$, and recall that $Q_j = \max\{S_j,D_j\}$ in Theorem~\ref{thm:sparse:iter:extended}. 
Note that $s \ge 1$ and $R_j \ge D_j$ imply $Q_j \le R_j$. 
In view of~\eqref{eq:thm:sparse:iter:extended} and~\eqref{eq:thm:e:extended} of Theorem~\ref{thm:sparse:iter:extended} and~\ref{thm:prob:extended}, it remains to estimate $\Pr_{j,1}$, $\Pr_{j,2}$ and $\Pr_{j,3,\ell}$ defined in~\eqref{eq:thm:sparse:pj1}--\eqref{eq:thm:sparse:pj3}. 
Starting with~$\Pr_{j,1}$ and~$\Pr_{j,2}$, using~\eqref{eq:thm:deg:extended} together with $R_j \ge Q_j$, $Q_j/S_{j+1} \ge S_{j}/S_{j+1}=R_j/R_{j+1}$ and the assumption~\eqref{eq:thm:basic:asmpt} we infer 
\begin{equation}\label{eq:thm:basic:pj1pj2}
\begin{split}
\Pr_{j,1} + \Pr_{j,2} & \le N^{j} \left(\frac{e\mu_j}{R_j}\right)^{R_j/(kR_{j+1})} + N^{j} \left(\frac{e\mu_j}{Q_j}\right)^{Q_j/(kS_{j+1})} \\
& \le 2 N^{j} \left(\frac{e\mu_j}{Q_j}\right)^{R_j/(kR_{j+1})} 
\le 2 N^{-j}\left(\frac{e\mu_j}{Q_j}\right)^{R_j/(2kR_{j+1})} .
\end{split}
\end{equation}
Finally, for~$\Pr_{j,3,\ell}$ of~\eqref{eq:thm:sparse:pj3} we henceforth tacitly assume $Q_j < R_j$ and $Q_{j+1}=D_{j+1}$.  
With an eye on~\eqref{eq:thm:sparse:extended}, using $Q_j \ge e \mu_j$ and the assumption~\eqref{eq:thm:basic:asmpt} we then (with foresight) similarly deduce
\begin{equation*}
\begin{split}
\Pi := N^{j} \left(\frac{e\mu_j}{\ceil{Q_j}}\right)^{\ceil{Q_j}/(kD_{j+1})} & \le N^{j} \left(\frac{e\mu_j}{Q_j}\right)^{\ceil{Q_j}/(kD_{j+1})} 
\le N^{-j} \left(\frac{e\mu_j}{Q_j}\right)^{\ceil{Q_j}/(2kD_{j+1})}.
\end{split}
\end{equation*}
Since $\ceil{x} \ge \max\{x,1\}$, by applying~\eqref{eq:thm:sparse:extended} with $(x,r,y,z)=(Q_{j}, t/(2qL), D_{j+1}, R_\ell)$ it follows that 
\begin{equation*}\label{eq:thm:basic:pj3}
\Pr_{j,3,\ell} \le 
(\Pi)^{\ceilL{t/(2Lq\binom{k}{\ell}\ceil{Q_j}R_\ell)}} \le N^{-j}\left(\frac{e\mu_j}{Q_j}\right)^{\max\bigl\{dt/(R_\ell D_{j+1}), \; bQ_j/D_{j+1}\bigr\}} .
\end{equation*}
Recalling our tacit assumption for~$\Pr_{j,3,\ell}$, this completes the proof in view of~\eqref{eq:thm:sparse:iter:extended}, \eqref{eq:thm:e:extended} and~\eqref{eq:thm:basic:pj1pj2}. 
\end{proof}
The details of the similar but simpler proof of Claim~\ref{cl:basic} are omitted 
(the above proof carries over by setting $s=1$ and $D_j=R_j$, since $Q_j=\max\{R_j/s,D_j\}=R_j$ implies $\Pr_{j,2}=\Pr_{j,3,\ell}=0$).

For the proof of Theorem~\ref{thm:extended:small} we need to define the parameters $(R_j)_{\ell \le j \le q}$ and $(D_j)_{\ell \le j \le q}$ of Theorem~\ref{thm:sparse:iter} and~\ref{thm:prob} in a suitable way. 
Intuitively, we shall set $R_j = \lambda^{q-j}D$, $\lambda = \max\{t^{1/(q-\ell+1)},B\}$ and $D_j=Q_j=B^{q-j}D=\Theta(1)$, and the crux is that the assumption~\eqref{eq:thm:pre:basic:small:asmpt} eventually yields $e\mu_j/x \le N^{-\Theta(1)}$ in~\eqref{eq:thm:deg:extended}--\eqref{eq:thm:sparse:extended}.  
We shall also exploit the indicators in~Theorem~\ref{thm:sparse:iter:extended} for estimating $t/R_{\ell}$ 
in~\eqref{eq:thm:extended}, see~\eqref{eq:thm:basic:small:pj3} below. 
\begin{proof}[Proof of Theorem~\ref{thm:extended:small}]
With foresight, let $B = \max\bigl\{4qk/\alpha,2kK/\alpha ,Ae/D,1\bigr\}$ and $\lambda = \max\{t^{1/(q-\ell+1)},B\}$. 
Define $D_j = S_j = B^{q-j}D$ and $R_j = \lambda^{q-j}D$ for all $\ell \le j \le q$. 
Note that $Q_j = \max\{S_j,D_j\} = D_j$ and $\min\{Q_j,R_j\} = D_j$, so that $\Pr_{j,2}=0$ in~\eqref{eq:thm:sparse:pj2}. 
Combining~\eqref{eq:thm:sparse:iter:extended} and~\eqref{eq:thm:e:extended} of Theorem~\ref{thm:sparse:iter:extended} and~\ref{thm:prob:extended}, we obtain 
\begin{equation}\label{eq:thm:basic:w}
\Pr(X(\cG) \ge \mu+t \text{ and } \Delta_q(\cG) \le D \text{ for some } \cG \subseteq \cH_p) \le \exp\left(-\frac{\varphi(t/\mu)\mu}{4L \binom{k}{\ell} D_\ell} \right) + \sum_{\ell \le j < q} \bigl[\Pr_{j,1} + \Pr_{j,3,\ell} \bigr] .
\end{equation}
Tacitly assuming $q > \ell$, it remains to estimate $\Pr_{j,1}$ and $\Pr_{j,3,\ell}$ defined in~\eqref{eq:thm:sparse:pj1} and \eqref{eq:thm:sparse:pj3}. 
Starting with~$\Pr_{j,1}$, by inserting~\eqref{eq:thm:pre:basic:small:asmpt} into~\eqref{eq:thm:deg:extended}, using $R_j \ge D B \ge Ae$ and $R_j/R_{j+1}=\lambda \ge B \ge 4qk/\alpha$ we infer 
\begin{equation}\label{eq:thm:basic:small:pj1}
\Pr_{j,1} \le N^{j} \left(\frac{e\mu_j}{R_j}\right)^{R_j/(kR_{j+1})}\le N^{q} \bigl(\mu_j/A\bigr)^{\lambda/k} \le N^{q-\alpha \lambda/k} \le N^{-q-\alpha \lambda/(2k)} . 
\end{equation}
For~$\Pr_{j,3,\ell}$, using $\ceil{Q_j} \ge Ae$ and $Q_{j}/D_{j+1} \ge B \ge 4qk/\alpha$ we (with foresight) similarly deduce 
\[
\Pi := N^{j} \left(\frac{e\mu_j}{\ceil{Q_j}}\right)^{\ceil{Q_j}/(kD_{j+1})} 
\le N^{-q-\alpha \ceil{Q_j}/(2k D_{j+1})}.
\]
Note that $\lambda = B$ implies $R_j=D_j=Q_j$. Hence $Q_j < R_j$ ensures $\lambda = t^{1/(q-\ell+1)}$, so that $t/R_\ell = t^{1/(q-\ell+1)}/D$. 
Recalling $\ceil{Q_j}/D_{j+1} \ge B$, by applying~\eqref{eq:thm:sparse:extended} with $(x,r,y,z)=(Q_{j}, t/(2qL), D_{j+1}, R_\ell)$ we thus infer 
\begin{equation}\label{eq:thm:basic:small:pj3}
\Pr_{j,3,\ell} \le 
\indic{Q_j < R_j}(\Pi)^{\ceilL{t/(2Lq\binom{k}{\ell}\ceil{Q_j}R_\ell)}} \le N^{-q-\max\bigl\{\beta t^{1/(q-\ell+1)}/D_{j+1}, \; \alpha B/(2k)\bigr\} } ,
\end{equation}
with $\beta = \alpha / (4Lqk\binom{k}{\ell}D)$. 
With the above estimates~\eqref{eq:thm:basic:small:pj1} and~\eqref{eq:thm:basic:small:pj3} for~$\Pr_{j,1}$ and~$\Pr_{j,3,\ell}$ in hand, using $B \ge 2kK/\alpha$ and $D_{j+1} \le D_{\ell}$ it follows by definition of $\lambda = \max\{t^{1/(q-\ell+1)},B\}$ that  
\[
\sum_{\ell \le j < q} \bigl[\Pr_{j,1} + \Pr_{j,3,\ell} \bigr] \le \indic{q > \ell} 2 q N^{-q} \exp\left(-\max\bigl\{b t^{1/(q-\ell+1)}, \; K \bigr\} \log N\right) ,
\]
with $b=\min\bigl\{\alpha/(2k),\beta/D_{\ell}\bigr\}$. 
Recalling~\eqref{eq:thm:basic:w}, this establishes~\eqref{eq:thm:extended:small} with $a=1/(4L \binom{k}{\ell} D_\ell)$. 
\end{proof}

\subsubsection{Proofs of Theorem~\ref{thm:pre:extendedp} and~\ref{thm:pre:extended:small}}\label{sec:proof:easy}
The `easy-to-apply' inequalities from Section~\ref{sec:easy} are convenient corollaries of Theorems~\ref{thm:extended}--\ref{thm:extended:small}. 
Indeed, Remark~\ref{rem:varphi} implies $\varphi(t/\mu)\mu \ge \min\{t^2/\mu,t\}/3$, so Theorem~\ref{thm:pre:extended:small} follows readily from Theorem~\ref{thm:extended:small}. 
For Theorem~\ref{thm:pre:extendedp} the basic strategy is to apply Theorem~\ref{thm:extended} with $s = \log(e/\pi^{\alpha/2})$, $R_j = \lambda^{q-j}D$, $\lambda = B \max\{\mu^{1/(q-\ell+1)},1\}$ and $D_j=B^{q-j}D=\Theta(1)$. 
The crux is that the assumption~\eqref{eq:thm:pre:extendedp:asmpt1} eventually yields $e\mu_j/Q_j \le \pi^{\alpha/2}/e = e^{-s}$ in~\eqref{eq:thm:basic:asmpt}--\eqref{eq:thm:extended}. 
As before, the indicators in~Theorem~\ref{thm:extended} facilitate estimating $t/R_{\ell}$ in~\eqref{eq:thm:extended}, see~\eqref{eq:thm:randinduced:proof:2} below. 
\begin{proof}[Proof of Theorem~\ref{thm:pre:extendedp}]
The proof is naturally divided into four parts: (i) introducing definitions, (ii)~estimating~$e\mu_j/Q_j$, (iii)~applying inequality~\eqref{eq:thm:extended} of Theorem~\ref{thm:extended}, and (iv)~verifying assumption~\eqref{eq:thm:basic:asmpt}.

Analogous to the proof of Theorem~\ref{thm:prob} and~\ref{thm:extended}, we may henceforth assume $N \ge 1$. 
Furthermore, by increasing~$A$ or $D$ if necessary, we may of course assume $A,D \ge 1$. 
With foresight, let $\beta = \alpha/2$ and $s=\log(e/\pi^{\beta})$. 
Set $B=\max\{e^2A/D,4k^2/(\tau \beta),4k^2(4A)^q,1\}$ and $\lambda = B \max\{\mu^{1/(q-\ell+1)},1\}$. 
Define $R_j = \lambda^{q-j}D$ and $D_j = B^{q-j}D$, so that $R_j \ge D_j$ and $R_q=D_q=D$.

Next we estimate $e\mu_j/Q_j$, where $Q_j \ge R_j/s$. 
Using assumption~\eqref{eq:thm:pre:extendedp:asmpt1} and $\alpha = 2\beta$, for $\ell \le j < q$ we have 
\begin{equation}\label{eq:mujQj}
\frac{e\mu_j}{Q_j} \le \frac{e \mu_j s}{R_j} = \frac{e \mu_j s}{D B^{q-j} \max\{\mu^{(q-j)/(q-\ell+1)},1\}} \le \frac{eA\pi^{2\beta} \log(e/\pi^{\beta})}{DB} \le  \frac{\pi^{\beta}}{e} = e^{-s} ,
\end{equation}
where we tacitly used $\pi \in (0,1]$ and $x\log(e/x) \le 1$ for all $x \in [0,1]$.

We now apply inequality~\eqref{eq:thm:extended} of Theorem~\ref{thm:extended}, deferring the proof of the claim that assumption~\eqref{eq:thm:basic:asmpt} holds. 
Using~\eqref{eq:mujQj} and $R_j/R_{j+1}=\lambda$, note that $X(\cH)=X(\cH_p)$ and $\Delta_q(\cH) \le D=D_q$ yield 
\begin{equation}\label{eq:thm:randinduced:proof:1}
\begin{split}
\Pr(X(\cH) \ge (1+\eps)\mu) &\le \Pr(X(\cG) \ge \mu + \eps \mu \text{ and } \Delta_q(\cG) \le D_q \text{ for some }  \cG \subseteq \cH_p) \\
& \le \exp\left(-\frac{a\varphi(\eps)\mu}{\max\{R_\ell/s,D_\ell\}}\right)  + qN^{-\ell}\left[2e^{-b\lambda s} + \max_{\ell \le j < q}\indic{Q_j < R_j}e^{-d\eps \mu s/(R_\ell D_{j+1})}\right] .
\end{split}
\end{equation}
Note that $\lambda = B$ implies $R_j=D_j$, in which case $s \ge 1$ yields $Q_j = D_j = R_j$. 
Hence $Q_j<R_j$ ensures $\lambda = B \mu^{1/(q-\ell+1)}$, so that $R_\ell = (B\mu^{1/(q-\ell+1)})^{q-\ell}D$. 
Noting $D_{j+1} \le D_{\ell}$, it follows that 
\begin{equation}\label{eq:thm:randinduced:proof:2}
\max_{\ell \le j < q}\indic{Q_j < R_j}e^{-d\eps \mu s/(R_\ell D_{j+1})} 
\le \exp\left(-\frac{d}{D_{\ell}B^{q-\ell}D} \cdot \eps \mu^{1/(q-\ell+1)} s\right).
\end{equation}
Similarly, using $s \ge 1$ we also see that $R_\ell/s > D_\ell$ implies $R_\ell = (B \mu^{1/(q-\ell+1)})^{q-\ell}D$. Hence
\begin{equation}\label{eq:thm:randinduced:proof:3}
\exp\left(-\frac{a\varphi(\eps)\mu}{\max\{R_\ell/s,D_\ell\}}\right) \le \exp\left(-\min\left\{\frac{a}{D_\ell} \cdot \varphi(\eps)\mu, \; \frac{a}{B^{q-\ell}D} \cdot \varphi(\eps)\mu^{1/(q-\ell+1)}s\right\}\right) .
\end{equation}
Remark~\ref{rem:varphi} implies $\min\{\varphi(\eps),1,\eps\} \ge \min\{\eps^2,1\}/3$. 
So, combining~\eqref{eq:thm:randinduced:proof:1}--\eqref{eq:thm:randinduced:proof:3}, using $s \ge \allowbreak \min\{1,\beta\} \log(e/\pi)$ and $\lambda \ge B \mu^{1/(q+\ell-1)}$ our findings thus establish~\eqref{eq:thm:pre:extendedp} for suitable $c=c(\eps,k,q,D,L,\alpha)>0$.

In the following we verify assumption~\eqref{eq:thm:basic:asmpt}, i.e., the claim omitted above. 
Note that $R_j/R_{j+1} = \lambda \ge B$ and $Q_{j}/D_{j+1} \ge D_{j}/D_{j+1} = B$. 
Using~\eqref{eq:mujQj}, for $\pi \le N^{-\tau}$ the left hand side of \eqref{eq:thm:basic:asmpt} can thus be bounded by 
\begin{equation}\label{eq:thm:randinduced:proof:asmpt}
\left(\frac{e \mu_j}{Q_j}\right)^{B} 
\le \pi^{\beta B} 
\le N^{-\tau \beta B} \le N^{-4k^2} \le N^{-4k j} .
\end{equation}
For $\pi > N^{-\tau}$ we defer the proof of the claim that for $\ell \le j < q$ we have 
\begin{equation}\label{eq:exp:largep}
\min\{\lambda,R_j/D_{j+1}\} \ge 4k^2 \log N. 
\end{equation}
Using~\eqref{eq:mujQj}, $s \ge 1$, $Q_j \ge R_j/s$ and \eqref{eq:exp:largep} we 
see that the left hand side of \eqref{eq:thm:basic:asmpt} can be bounded by 
\[
\max\left\{\left(e^{-1} \right)^{R_j/R_{j+1}},\left(e^{-s}\right)^{R_j/(sD_{j+1})}\right\} 
\le \max\left\{e^{-\lambda},e^{-R_j/D_{j+1}}\right\} \le N^{-4k^2} \le N^{-4k j}.
\]
To sum up, we have verified~\eqref{eq:thm:basic:asmpt}, assuming that~\eqref{eq:exp:largep} holds for $\pi > N^{-\tau}$. 
Turning to the remaining claim~\eqref{eq:exp:largep}, using assumption~\eqref{eq:thm:pre:extendedp:asmpt2} we see that $\pi > N^{-\tau}$ implies  
\[
\lambda \ge B \mu^{1/(q-\ell+1)} \ge B (\log N)/A \ge 4k^2 \log N .
\] 
Similarly, $\pi > N^{-\tau}$, $\ell \le j < q$ and $N \ge 1$ imply 
\[
R_j/D_{j+1} = \lambda^{q-j}/ B^{q-j-1} \ge \bigl(B\mu^{1/(q-\ell+1)}\bigr)^{q-j}/ B^{q-j-1} \ge B \bigl((\log N)/A\bigr)^{q-j} \ge 4k^2 \log N ,
\]
establishing~\eqref{eq:exp:largep}. As discussed, this completes the proof of~\eqref{eq:thm:pre:extendedp}.
\end{proof}

\section{Applications}\label{sec:applications}
In this section we illustrate our concentration techniques, by applying the basic inequalities from Section~\ref{sec:easy} to several pivotal examples. 
In Section~\ref{sec:applications:RISH} we improve previous work of Janson and Ruci{\'n}ski~\cite{UTAP} on random induced subhypergraphs, and derive sharp upper tail inequalities for several quantities of interest in additive combinatorics. 
In Section~\ref{sec:applications:RG} we answer a question of Janson and Ruci{\'n}ski~\cite{DLP} on subgraph counts in binomial random graphs, and improve the main applications of Wolfovitz~\cite{WG2012} and {\v{S}}ileikis~\cite{MS}.

\subsection{Random induced subhypergraphs}\label{sec:applications:RISH}
In probabilistic combinatorics, random induced subhypergraphs~$\cH_p$ are a standard test-bed for upper tail inequalities (see, e.g., Section~3 in the survey~\cite{UT}).  
Janson and Ruci{\'n}ski studied the number of randomly induced edges in~\cite{UTAP}, 
and one of their principle results concerns $k$-uniform hypergraphs with $v(\cH) = N$ vertices, $e(\cH)  \ge \gamma N^{q}$ edges and $\Delta_q(\cH) \le D$ (for easier comparison with Theorem 2.1 in~\cite{UTAP}, note that $\Delta_j(\cH) \le N^{\max\{q-j,0\}}\Delta_q(\cH)$ holds). 
Writing $X=e(\cH_p)$ and $\mu=\E X$, they obtained bounds of form
\begin{equation}\label{eq:thm:randinduced:previous}
\exp\bigl(-C(\eps)\mu^{1/q}\log(1/p)\bigr) \le \Pr(X \ge (1+\eps)\mu) \le \exp\bigl(-c(\eps)\mu^{1/q}\bigr) ,
\end{equation}
determining $\log \Pr(X \ge (1+\eps)\mu)$ up to a missing logarithmic factor (in fact, their lower bound needs an extra assumption). 
For $2 \le q<k$ the following corollary of Theorem~\ref{thm:pre:extendedp} improves the exponential rate of decay of~\eqref{eq:thm:randinduced:previous} in the more general weighted case. 
Noteworthily, inequality~\eqref{eq:thm:randinduced} below closes the $\log(1/p)$ gap left open by Janson and Ruci{\'n}ski~\cite{UTAP} (for the special case $q=2$ this was already resolved in~\cite{AP}).  
%
\begin{theorem}[Weighted edge-count of random induced subhypergraphs]%
\label{thm:randinduced}
Let $1 \le q < k$ and $\gamma,D,a,L> 0$. 
Assume that~$\cH$ is a 
$k$-uniform hypergraph with $v(\cH) \le N$, $e(\cH) \ge \gamma N^{q}$, $\Delta_q(\cH) \le D$, and $w_f \in [a,L]$ for all $f \in \cH$. 
Set $X = w(\cH_p)$ and $\mu=\E X$. 
For any $\eps > 0$ there is $c=c(\eps,k,\gamma,D,a,L)>0$ such that for all $p \in (0,1]$ we have 
\begin{equation}\label{eq:thm:randinduced}
\Pr(X \ge (1+\eps)\mu)  \le  \exp\Bigl(-c \min\bigl\{\mu, \; \mu^{1/q} \log(e/p)\bigr\}\Bigr) . 
\end{equation}
\end{theorem}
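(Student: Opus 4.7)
The strategy is to apply Theorem~\ref{thm:pre:extendedp} in the random induced subhypergraph setup of Remark~\ref{rem:general}, with parameters $\ell=1$, $\pi=p$, $\alpha=(k-q)/q$, and a suitably small $\tau$. Concretely, I would take $X(\cJ):=w(\cJ)$, so that $X=X(\cH)=w(\cH_p)$ and $\mu=\E X$, noting that assumption (P') holds with $k$, $L$ and $N$ as given in the theorem and that (P$q$) is immediate from $\Delta_q(\cH)\le D$. The key quantitative input is a clean bound on $\mu_j$ and a matching lower bound on $\mu$: using $\Delta_q(\cH)\le D$ and a two-step count (first pick $j+1,\ldots,q$ to extend the given $j$ vertices, then multiply by $D$), one obtains
\[
\mu_j \;\le\; D\binom{N-j}{q-j}p^{k-j} \;=\; O\bigl(N^{q-j}p^{k-j}\bigr) \qquad (1\le j<q),
\]
while $e(\cH)\ge\gamma N^q$ and $w_f\ge a$ give $\mu\ge a\gamma N^q p^k$.

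The heart of the argument is verifying the balancedness assumption~\eqref{eq:thm:pre:extendedp:asmpt1}. When $\mu\ge 1$ this is a direct computation: the ratio
\[
\frac{\mu_j}{\mu^{(q-j)/q}} \;\le\; O\!\left(\frac{N^{q-j}p^{k-j}}{(N^qp^k)^{(q-j)/q}}\right) \;=\; O\bigl(p^{j(k-q)/q}\bigr) \;\le\; O\bigl(p^{(k-q)/q}\bigr),
\]
which yields~\eqref{eq:thm:pre:extendedp:asmpt1} with $\alpha=(k-q)/q>0$ (using $j\ge 1$ and $p\le 1$). When $\mu<1$, the denominator in~\eqref{eq:thm:pre:extendedp:asmpt1} is $1$, so I would instead exploit the constraint $\mu<1$ itself: $a\gamma N^qp^k<1$ implies $N<(a\gamma)^{-1/q}p^{-k/q}$, and substituting this into $\mu_j=O(N^{q-j}p^{k-j})$ again recovers $\mu_j\le A\cdot p^{(k-q)/q}$. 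For~\eqref{eq:thm:pre:extendedp:asmpt2}, pick $\tau=q/(2k)$: if $p\le N^{-\tau}$ the indicator vanishes, while if $p>N^{-\tau}$ then $\mu^{1/q}\ge(a\gamma)^{1/q}N^{1-\tau k/q}=(a\gamma)^{1/q}N^{1/2}\ge\log N$ for all $N$ beyond some threshold $N_0=N_0(k,\gamma,D,a)$.

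With both assumptions verified, Theorem~\ref{thm:pre:extendedp} yields
\[
\Pr(X\ge(1+\eps)\mu)\;\le\;(1+3q/N)\exp\Bigl(-d\min\{\eps^2,1\}\min\bigl\{\mu,\,\mu^{1/q}\log(e/p)\bigr\}\Bigr)
\]
for some $d=d(\eps,k,\gamma,D,a,L)>0$ and all $N\ge N_0$. The prefactor $(1+3q/N)\le 1+3q$ is absorbed into the exponent by observing that the bound is trivial (at most~$1$) unless $\min\{\mu,\mu^{1/q}\log(e/p)\}$ exceeds a constant depending only on $q$ and $d$, after which halving $d$ suffices. For the small range $N<N_0$ the hypergraph $\cH$ lives in a bounded universe, and $\min\{\mu,\mu^{1/q}\log(e/p)\}$ is bounded above by a constant, so~\eqref{eq:thm:randinduced} holds trivially after further shrinking $c$.

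The step I expect to require the most care is the $\mu<1$ case of assumption~\eqref{eq:thm:pre:extendedp:asmpt1}: the bookkeeping needed to trade the $N^{q-j}$ growth in $\mu_j$ against the $p^{-k/q}$ upper bound on $N$ forced by $\mu<1$ is slightly fiddly, and one needs $q<k$ (the excluded case in the theorem) to make the resulting exponent of $p$ positive. Everything else is a standard verification of the hypotheses of Theorem~\ref{thm:pre:extendedp}.
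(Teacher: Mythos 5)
Your proposal follows the same route as the paper's proof: verify the hypotheses of Theorem~\ref{thm:pre:extendedp} with $\ell=1$, $\pi=p$, $\tau=q/(2k)$, using $\mu_j\le DN^{q-j}p^{k-j}$ and $\mu\ge a\gamma N^qp^k$, then remove the $1+O(N^{-1})$ prefactor. One small detour you took is unnecessary: you split into $\mu\ge1$ and $\mu<1$ for assumption~\eqref{eq:thm:pre:extendedp:asmpt1}, but the single computation $\mu_j/\mu^{(q-j)/q}\le D\delta^{j/q-1}p^{j(k-q)/q}\le Ap^{1/q}$ already handles both, because when $\mu<1$ the denominator in~\eqref{eq:thm:pre:extendedp:asmpt1} becomes $1$ and $\mu_j\le A p^{1/q}\mu^{(q-j)/q}<Ap^{1/q}$ follows immediately (note $q-j>0$); this is what the paper does.

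There is a genuine gap in the prefactor-removal step. You write that ``the bound is trivial (at most~$1$)'' when $\Pi'=\min\{\mu,\mu^{1/q}\log(e/p)\}$ does not exceed a constant, and similarly that the bound ``holds trivially after further shrinking $c$'' when $N<N_0$. Neither of these is trivial as stated: the target bound $\exp(-c\Pi')$ is \emph{strictly} less than~$1$ for every $c>0$ and $\Pi'>0$, so comparing $\Pr(X\ge(1+\eps)\mu)$ against~$1$ proves nothing. One really does need a strictly-less-than-one bound on the probability for small~$\Pi'$. The paper supplies this via Markov: $\Pr(X\ge(1+\eps)\mu)\le 1/(1+\eps)=e^{-\log(1+\eps)}\le e^{-\min\{1,\eps\}\Pi'/(12)}$ whenever $\Pi'\le 6$, say, and the prefactor is harmless once $\Pi'\ge 6$. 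Your argument would become correct by invoking Markov in the small-$\Pi'$ regime, but as written the key mechanism is missing.
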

\begin{remark}
Setting $p = m/v(\cH)$, inequality~\eqref{eq:thm:randinduced} also carries over to~$\cH_m$ as defined in Section~\ref{sec:uniform}. 
\end{remark}
%
Inequality~\eqref{eq:thm:randinduced} does not always hold in the excluded case $q=k$. 
A concrete counterexample is the complete $k$-uniform hypergraph $\cH=\cH_N$ with $V(\cH)=[N]$ and $w_f=1$. 
Then $q=k$, $X = \binom{|[N]_p|}{k} \approx |[N]_p|^k/k!$ and $\mu = \binom{N}{k}p^k \approx (Np)^k/k!$. For $\mu = \omega(1)$, $p \le 1/2$ and $\eps=\Theta(1)$ it is routine to see that $\Pr(w(\cH_p) \ge (1+\eps)\mu) = \exp\bigl(-\Theta(Np)\bigr)= \exp\bigl(-\Theta(\mu^{1/q})\bigr)$ holds, i.e., that there is no logarithmic term. 

Concerning sharpness of~\eqref{eq:thm:randinduced}, in applications we usually do not consider a single hypergraph $\cH$, but sequences of hypergraph $(\cH_N)_{N \in \NN}$ which are nearly monotone, i.e., where $\cH_{N} \subseteq \cH_{N+1}$ holds up to some minor `defects' (arising, e.g., due to boundary effects). 
The following remark states that, in this frequent case, the upper tail inequality~\eqref{eq:thm:randinduced} is best possible up to the value of the parameter~$c$ (for $2 \le q < k$).  
\begin{remark}[Matching lower bound]%
\label{rem:randinduced:lower}
Let $2 \le q < k$ and $\gamma,D,a,L,n_1,n_2> 0$. 
Let $(\cH_N)_{N \ge n_1}$ be a sequence of $k$-uniform hypergraphs such that all $\cH=\cH_N$ satisfy the assumptions of Theorem~\ref{thm:randinduced}. 
Assume that there is $\beta \in (0,1]$ such that $e(\cH_N \cap \cH_M) \ge \beta e(\cH_N)$ for all $M \ge N \ge n_2$. 
Then for all~$\eps > 0$ there are~$n_0=n_0(k,\gamma,D,a,L,\beta,n_1,n_2)>0$ and~$C=C(\eps,\gamma,k,q,D,a,L,\beta,n_1,n_2,)>0$ such that for all~$\cH=\cH_N$ with~$N \ge n_0$, setting~$X = w(\cH_p)$ and~$\mu=\E X$, for all~$p \in (0,1]$ we have 
\begin{equation}\label{eq:thm:randinduced:lower}
\Pr(X \ge (1+\eps)\mu)  \ge  \indic{1 \le (1+\eps)\mu \le w(\cH)} \exp\Bigl(-C \min\bigl\{\mu, \; \mu^{1/q} \log(1/p)\bigr\}\Bigr) .
\end{equation}
\end{remark}
\noindent 
We omit the proof of Remark~\ref{rem:randinduced:lower}, which mimics the lower bound techniques from~\cite{AP} in a routine way. 
\begin{proof}[Proof of Theorem~\ref{thm:randinduced}]
Let $\delta = a \gamma$, and note that $\mu \ge e(\cH)p^k \cdot \min_{f \in \cH} w_f \ge \delta N^qp^k$ (we never use $w_f \ge a$ again, i.e., we could weaken our assumptions).  
Inequality~\eqref{eq:thm:randinduced} holds trivially whenever $N< k$ (since then $0 \le w(\cH_p) \le L \cdot e(\cH)=0$), so we may henceforth assume $N \ge k$.  
Our main task is to verify the assumptions of Theorem~\ref{thm:pre:extendedp}. 
Let $\ell=1$ and $\tau = q/(2k)$. 
As $N^{1/2} \ge \log N$ for all $N > 0$, for $p \ge N^{-\tau}$ we have 
\begin{equation}\label{eq:mu}
\mu^{1/(q-\ell+1)} = \mu^{1/q} \ge \delta^{1/q} Np^{k/q} \ge \delta^{1/q} N^{1-k\tau/q} \ge \delta^{1/q} N^{1/2} \ge \delta^{1/q} \log N .
\end{equation}
As discussed in Example~\ref{ex:RIH}, using~\eqref{eq:muj:extended:RIH} and $|\Gamma_U(\cH)| \le v(\cH)^{q-j} \cdot \Delta_q(\cH)$, for $1 \le j < q$ we thus have   
\begin{equation}\label{eq:muj:RIH}
\mu_j \le N^{q-j} \cdot D \cdot p^{k-j}.
\end{equation}
Recalling $\ell=1$, \eqref{eq:mu} and $q<k$, there thus is a constant $A=A(D,\delta)>0$ such that for $1 \le j < q$ we have  
\begin{equation}\label{eq:mujRj}
\frac{\mu_j}{\mu^{(q-j)/(q-\ell+1)}} \le \frac{D N^{q-j} p^{k-j}}{(\mu^{1/q})^{q-j}} \le D \delta^{j/q-1} p^{j(k/q-1)} \le A p^{1/q} .
\end{equation}
Hence assumptions~\eqref{eq:thm:pre:extendedp:asmpt1}--\eqref{eq:thm:pre:extendedp:asmpt2} hold with $\pi=p$ and $\alpha=1/q$. 
Using~\eqref{eq:thm:pre:extendedp} of Theorem~\ref{thm:pre:extendedp} it follows that 
\begin{equation}\label{eq:thm:randinduced:lower:raw}
\Pr(w(\cH_p) \ge (1+\eps)\mu) \le (1+3qN^{-1}) e^{-\Pi} ,
\end{equation}
where $\Pi=c' \min\bigl\{\eps^2,1\bigr\} \min\{\mu, \; \mu^{1/(q-\ell+1)}\log(e/p)\}$ and $c'=c'(\ell,q,k,L,D,A,\delta)>0$. 

The author finds~\eqref{eq:thm:randinduced:lower:raw} quite satisfactory, but in the literature the usually irrelevant prefactor $1+3qN^{-1}$ is often suppressed for cosmetic reasons. 
Below we shall achieve this by inflating the constant in the exponent (\emph{without} assuming that~$n$, $p$ or $\Pi$ are large). 
If $\Pi \ge 6$, then $N \ge k \ge q$ implies $3qN^{-1} \le 3 \le \Pi/2$, so that 
\[
\Pr(w(\cH_p) \ge (1+\eps)\mu) \le e^{-\Pi + 3qN^{-1}}  \le e^{-\Pi/2} .
\]
Otherwise $1 \ge \Pi/6$ holds, in which case $\eps/(1+\eps) \ge \min\{1,\eps\}/2$ and Markov's inequality yield 
\[
\Pr(w(\cH_p) \ge (1+\eps)\mu) \le \frac{1}{1+\eps} = 1-\frac{\eps}{1+\eps} \le e^{-\eps/(1+\eps)} \le e^{-\min\{1,\eps\}\Pi/12} ,
\]
establishing~\eqref{eq:thm:randinduced} for suitable $c=c(\eps,c')>0$.  
\end{proof}

Combining Theorem~\ref{thm:randinduced} and Remark~\ref{rem:randinduced:lower}, we obtain the following convenient upper tail result (see~\cite{AP} for a similar result in the special case~$q=2$).  
It applies to many widely-studied objects in additive combinatorics and Ramsey theory, each time closing the logarithmic gap present in previous work, see~\eqref{eq:thm:randinduced:previous} and~\cite{UTAP}. 
\begin{corollary}\label{cor:randinduced}
Let $2 \le q < k$  and $\gamma,D,a,L,n_1> 0$. 
Let $(\cH_n)_{n \ge n_1}$ be $k$-uniform hypergraphs such that $\cH_n \subseteq \cH_{n+1}$, $v(\cH_n) \le n$, $e(\cH_n)  \ge \gamma n^{q}$, $\Delta_q(\cH_n) \le D$, and $w_f \in [a,L]$ for all $f \in \cH_n$. 
Then for all~$\eps > 0$ there are $n_0=n_0(k,\gamma,D,a,L,n_1)>0$ 
and~$c,C > 0$ (depending only on $\eps,k,\gamma,D,a,L,n_1$) 
such that for all~$\cH=\cH_n$ with~$n \ge n_0$, setting~$X = w(\cH_p)$ and~$\mu=\E X$, for all $p \in (0,1]$ we have 
\begin{equation}\label{eq:indshg:both}
\indic{1 \le (1+\eps)\mu \le w(\cH)} \exp\Bigl(-C \Psi_{q,\mu}\Bigr) \le \Pr(X \ge (1+\eps)\mu)  \le  \exp\Bigl(-c  \Psi_{q,\mu}\Bigr) , 
\end{equation}
where~$\Psi_{q,\mu} = \min\{\mu, \; \mu^{1/q} \log(1/p)\}$. 
\end{corollary}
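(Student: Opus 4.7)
The plan is to derive both bounds in \eqref{eq:indshg:both} by directly invoking the two results that Corollary~\ref{cor:randinduced} is designed to combine: Theorem~\ref{thm:randinduced} for the upper bound and Remark~\ref{rem:randinduced:lower} for the lower bound. The hypotheses of both results are built into the corollary's setup, so the task reduces to checking compatibility and sorting out the $\log(e/p)$ versus $\log(1/p)$ discrepancy.

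First I would verify the hypotheses of Theorem~\ref{thm:randinduced}: each $\cH=\cH_n$ is $k$-uniform with $v(\cH) \le n$, $e(\cH) \ge \gamma n^q$, $\Delta_q(\cH) \le D$, and $w_f \in [a,L]$, so the theorem applies with $N=n$ and yields $\Pr(X \ge (1+\eps)\mu) \le \exp(-c'\min\{\mu,\mu^{1/q}\log(e/p)\})$ for some $c'=c'(\eps,k,\gamma,D,a,L)>0$. The elementary inequality $\log(e/p) = 1+\log(1/p) \ge \log(1/p)$ immediately gives
\[
\min\{\mu,\: \mu^{1/q}\log(e/p)\} \; \ge \; \min\{\mu,\: \mu^{1/q}\log(1/p)\} \; = \; \Psi_{q,\mu},
\]
which establishes the upper bound in~\eqref{eq:indshg:both} with $c=c'$ (no restriction on $p$ is even needed for this direction).

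For the lower bound I would appeal to Remark~\ref{rem:randinduced:lower}, whose only hypothesis beyond those already granted is the near-monotonicity condition $e(\cH_N \cap \cH_M) \ge \beta e(\cH_N)$ for $M \ge N \ge n_2$. The assumption $\cH_n \subseteq \cH_{n+1}$ makes this trivial: iterating the nesting gives $\cH_N \subseteq \cH_M$ for all $M \ge N$, so $\cH_N \cap \cH_M = \cH_N$ and one may take $\beta = 1$ and $n_2 = n_1$. Applying Remark~\ref{rem:randinduced:lower} with this $\beta$ then yields, for some $n_0$ and $C$ of the claimed dependencies (and all $p \in (0,1-\alpha]$), the matching lower bound $\Pr(X \ge (1+\eps)\mu) \ge \indic{1 \le (1+\eps)\mu \le w(\cH)}\exp(-C\Psi_{q,\mu})$.

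There is no real obstacle here; the only minor bookkeeping point is that the upper bound from Theorem~\ref{thm:randinduced} is stated with $\log(e/p)$ precisely so that it remains informative as $p\to 1$, whereas the lower bound of Remark~\ref{rem:randinduced:lower} is stated with $\log(1/p)$ and requires $p \le 1-\alpha$ to keep this factor bounded away from zero. Since $\Psi_{q,\mu}$ is defined with $\log(1/p)$, the upper bound conversion uses $\log(e/p) \ge \log(1/p)$ (costing nothing), while the lower bound's restriction to $p \le 1-\alpha$ is inherited directly from Remark~\ref{rem:randinduced:lower}. Combining the two bounds produces~\eqref{eq:indshg:both}, and the dependencies of $n_0,c,C$ are exactly those inherited from the two ingredients (with $n_2,\beta$ now absorbed into constants depending only on $k,\gamma,D,a,L,n_1$).
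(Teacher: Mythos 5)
Your proposal is correct and matches the paper's intended derivation exactly: the paper explicitly introduces Corollary~\ref{cor:randinduced} as the result of "combining Theorem~\ref{thm:randinduced} and Remark~\ref{rem:randinduced:lower}", and your verification that $\cH_n \subseteq \cH_{n+1}$ gives the near-monotonicity hypothesis with $\beta=1$, together with the conversion $\log(e/p)\ge\log(1/p)$ for the upper bound, is precisely the (routine) bookkeeping needed.
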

\noindent 
In particular, letting the edges of the $k$-uniform hypergraphs~$\cH_n$ with vertex-set~$V(\cH)=[n]$ encode the relevant objects, 
it is not difficult to check that Corollary~\ref{cor:randinduced} with uniform weights~$w_f=1$ implies\footnote{Note that using weights~$w_f=1$ we count unordered objects, i.e., treat the objects as~$k$-sets (if desired, we could also treat them as ordered~$k$-vectors by using non-uniform weights~$w_f >0$, say).} 
all the upper tail bounds presented in Examples~\ref{ex:AP}--\ref{ex:rssum} of Section~\ref{sec:intro:examples} (using~$q=2$ for $k$-term arithmetic progressions, $(k,q)=(3,2)$ for Schur triples, $(k,q)=(4,3)$ for additive quadruples, and~$(k,q)=(r+s,r+s-1)$ for $(r,s)$-sums). 
Motivated by Section~2.1 in~\cite{UTAP}, we now record a further common generalization of these~examples. 
\begin{example}[Integer solutions of linear homogeneous systems]
Let  $1 \le r \le k-2$. Let $A$ be a~$r \times k$ integer matrix. 
Following~\cite{UTAP}, we assume that every $r \times r$ submatrix~$B$ of~$A$ has full rank, i.e., $\mathrm{rank}(B)=r=\mathrm{rank}(A)$. 
We also assume that there exists a distinct-valued positive integer solution to $A x = \uzero$, where $x=(x_1, \ldots, x_k)$ is a column vector and $\uzero=(0, \ldots, 0)$ is an $r$-dimensional column vector. 
Let the edges of the $k$-uniform hypergraph~$\cH_n$ with~$V(\cH_n)=[n]$ encode solutions $\{x_1, \ldots, x_k\}\subseteq [n]$ of the system $A x = \uzero$ with distinct~$x_i$. 
The discussion of Section~2.1 in~\cite{UTAP} implies that $(\cH_n)_{n \ge n_1}$ satisfies the assumptions of Corollary~\ref{cor:randinduced} with~$q=k-r$, 
so the upper tail inequality~\eqref{eq:indshg:both} holds for~$X = e(\cH_p)$, say.  
\end{example}

\subsubsection{Small expectations case}\label{sec:applications:RISH:small}
Note that inequality~\eqref{eq:indshg:both} does not guarantee a similar dependence of $c,C>0$ on $\eps$. 
Of course, we can~also ask for finer results, which determine how the exponential decay of the upper tail depends on $\eps$. 
The following corollary of Theorem~\ref{thm:pre:extended:small} provides a partial answer for small~$p$ (see~\cite{AP} for results which for~$q=2$ cover all~$p$). 
\begin{theorem}\label{thm:randinduced:small}
Let $k \ge 2$. Let $1 \le q \le k$ and $D,L > 0$. 
Assume that~$\cH$ is a $k$-uniform hypergraph with $v(\cH) \le N$, $\Delta_q(\cH) \le D$ and $\max_{f \in \cH}w_f\le L$, where $N \ge 1$.   
Set $X = w(\cH_p)$ and $\mu=\E X$. 
For all $\sigma,\Lambda > 0$ there are $c=c(\sigma,\Lambda,k,D,L)>0$ and $d=d(q) \ge 1$ such that for all $p \le \Lambda N^{-(q-1)/(k-1)-\sigma}$ and $t > 0$ we have 
\begin{equation}\label{eq:thm:randinduced:small}
\Pr(X \ge \mu+t) \le d \exp\left(-c \min\Bigl\{\varphi(t/\mu) \mu, \; t^{1/q}\log N\Bigr\}\right) .
\end{equation}
Furthermore, setting $p = m/v(\cH)$, inequality~\eqref{eq:thm:randinduced:small} also holds with $\cH_p$ replaced by $\cH_m$. 
\end{theorem}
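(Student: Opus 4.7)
The plan is to deduce the bound directly from Theorem~\ref{thm:pre:extended:small} applied with $\ell = 1$, viewing $w(\cH_p)$ via the random induced subhypergraph setup of Example~\ref{ex:RIH}. Setting $Y_f = w_f\prod_{\sigma \in f}\xi_\sigma$ and $\xi_\sigma = \indic{\sigma \in V_p(\cH)}$, we have $X(\cH) = w(\cH_p)$; assumption~(H$1$) holds with $\cA_f = f$, assumption~(P) follows from the hypothesis (using $\max_f |f| \le k$, $\max_f Y_f \le L$, $v(\cH) \le N$), and (P$q$) is exactly $\Delta_q(\cH) \le D$. The extension to $\cH_m$ will follow from the same argument via Remark~\ref{rem:general} after setting $p = m/v(\cH)$.

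The only substantive step is verifying the polynomial-decay hypothesis~\eqref{eq:thm:pre:basic:small:asmpt} for the $\mu_j$; the case $q = 1$ is vacuous since the maximum is over an empty set. Combining the explicit bound~\eqref{eq:muj:RIH} with $p \le \Lambda N^{-(q-1)/(k-1) - \sigma}$, and using the identity $(q-j)(k-1) - (k-j)(q-1) = (k-q)(1-j)$, one gets for all $1 \le j < q$
\begin{equation*}
\mu_j \le D N^{q-j} p^{k-j} \le D \max\{1,\Lambda\}^{k-j}\, N^{(k-q)(1-j)/(k-1)\, -\, (k-j)\sigma}.
\end{equation*}
Since $k \ge q$ and $j \ge 1$ the first term in the exponent is non-positive, and since $j < q \le k$ forces $k-j \ge 1$ the second term is at most $-\sigma$. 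Hence $\max_{1 \le j < q}\mu_j \le A N^{-\sigma}$ with $A = D\max\{1,\Lambda\}^{k-1}$, so \eqref{eq:thm:pre:basic:small:asmpt} holds with $\alpha = \sigma$.

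Applying Theorem~\ref{thm:pre:extended:small} with $\ell = 1$ and, say, $K = 1$ then yields
\begin{equation*}
\Pr(X \ge \mu + t) \le (1 + 2qN^{-q})\exp\left(-\min\bigl\{c'\varphi(t/\mu)\mu,\ \max\{c' t^{1/q}, 1\}\log N\bigr\}\right)
\end{equation*}
for some $c' > 0$ depending on $\sigma,\Lambda,k,q,D,L$; the residual $q$-dependence can be absorbed into $k$ by taking the minimum over $1 \le q \le k$, the prefactor is bounded by $d := 1 + 2q$ thanks to $N \ge 1$, and dropping the inner $1$ from the max delivers~\eqref{eq:thm:randinduced:small}. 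There is no genuine obstacle in this argument; the only delicate point is conceptual rather than technical, namely that the threshold $p \le \Lambda N^{-(q-1)/(k-1) - \sigma}$ is exactly what makes $\mu_1 \le DN^{q-1}p^{k-1}$ decay polynomially in $N$, which is precisely the regime in which \eqref{eq:thm:pre:basic:small:asmpt} (and hence Theorem~\ref{thm:pre:extended:small}) is applicable.
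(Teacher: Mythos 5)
Your proof is correct and follows essentially the same route as the paper: you verify assumption~\eqref{eq:thm:pre:basic:small:asmpt} via~\eqref{eq:muj:RIH} and the threshold on~$p$, and then invoke Theorem~\ref{thm:pre:extended:small} with $\ell=1$, $K=1$ (the paper uses the equivalent observation $\tfrac{q-1}{k-1}=\max_{1\le j<q}\tfrac{q-j}{k-j}$ where you use the identity $(q-j)(k-1)-(k-j)(q-1)=(k-q)(1-j)$; these are the same algebraic fact). Your explicit handling of the $q=1$ case, the absorption of the $q$-dependence of $c$ via $q\le k$, and the appeal to Remark~\ref{rem:general} for $\cH_m$ are all minor details the paper leaves implicit, so the two arguments match.
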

\noindent 
Assume that $\cH=\cH_N$ also satisfies $e(\cH_N) \ge \gamma N^q$, the monotonicity conditions of Remark~\ref{rem:randinduced:lower}, $w_f = 1$ and $2 \le q < k$. 
Mimicking the lower bound arguments from~\cite{AP}, inequality~\eqref{eq:thm:randinduced:small} can then shown to be best possible up to the 
values of~$d,c$ for some range of small~$p$ (we leave the details to the interested reader). 
\begin{proof}[Proof of Theorem~\ref{thm:randinduced:small}]
Our main task is to verify assumption~\eqref{eq:thm:pre:basic:small:asmpt} of Theorem~\ref{thm:pre:extended:small}.
To this end we exploit that 
\[
\frac{q-1}{k-1} = \max_{1 \le j < q }\frac{q-j}{k-j} .
\]
Indeed, using~\eqref{eq:muj:RIH} and $N \ge 1$ there thus is a constant $A=A(D,\Lambda)>0$ such that we have 
\begin{equation*}\label{eq:muj:RIH2}
\max_{1 \le j < q}\mu_j \le\sum_{1 \le j < q} D N^{q-j} p^{k-j} \le D \sum_{1 \le j < q} \Lambda^{k-j} N^{(q-j)-(k-j)(q-1)/(k-1)-(k-j)\sigma} \le A N^{-\sigma} .
\end{equation*}
Applying Theorem~\ref{thm:pre:extended:small} (with $\sigma=\alpha$ and $K=1$) now readily establishes inequality~\eqref{eq:thm:randinduced:small}. 
\end{proof}

\subsection{Subgraph counts in random graphs}\label{sec:applications:RG} 
In this section we consider subgraph counts in the binomial random graph~$G_{n,p}$, which are pivotal examples for illustrating 
various concentration methods (see, e.g.,~\cite{KimVu2000,Vu2001,Vu2002,UT,DL,UTSG} and Examples~\ref{ex:EE}--\ref{ex:VE} in Section~\ref{sec:easy:examples}). 
We shall discuss two qualitatively different upper tail bounds in Sections~\ref{sec:applications:RG:SG} and~\ref{sec:applications:RG:SG:large}.

We henceforth tacitly write $X=X_H$ for the number of copies of~$H$ in $G_{n,p}$, and set $\mu=\E X = \Theta(n^{v_H}p^{e_H})$. 
Let us recall some definitions from random graph theory. 
Writing $d(J)=e_J/v_J$, a graph~$H$ is called \emph{balanced} if $e_H \ge 1$ and $d(H) \ge d(J)$ for all $J \subsetneq H$ with $v_J \ge 1$. 
If this holds with $d(H) > d(J)$, then~$H$ is called \emph{strictly balanced}. 
Writing $d_2(J)=(e_J-1)/(v_J-2)$, a graph~$H$ is called \emph{$2$-balanced} if $e_H \ge 2$ and $d_2(H) \ge d_2(J)$ for all $J \subsetneq H$ with $v_J \ge 3$. 
If this holds with $d_2(H) > d_2(J)$, then~$H$ is called \emph{strictly $2$-balanced}. 

\subsubsection{Small deviations: sub-Gaussian type bounds}\label{sec:applications:RG:SG}
We first consider \emph{sub-Gaussian type}~$\Pr(X \ge \mu+t) \le C\exp(-ct^2/\Var X)$ upper tail inequalities. 
Our main focus is on the Poisson range, where $\Var X \sim \E X = \mu$ holds, which according to Kannan~\cite{Kannan} is the more difficult range. 
For small~$p$ the following simple corollary of Theorem~\ref{thm:pre:extended:small} extends/sharpens several results from~\cite{Vu2000,DL,MS,WG2012,Kannan,WG2011}, and implies Theorem~\ref{thm:intro:sgcount}. 
(For balanced and $2$-balanced graphs~$H$ it is folklore that $\delta_H \ge 1$. Furthermore, with the exception of perfect matchings, all $2$-balanced graphs are strictly balanced.)   
\begin{theorem}[Subgraph counts in random graphs: small expectations case]%
\label{thm:sg:small}
Let~$H$ be a graph with $v=v_H$~vertices, $e=e_H$~edges and minimum degree $\delta=\delta_H$. 
Let $X=X_H$ and $\mu=\E X$.  
Define $s=\min\{v-1,e-\delta+1\}$. 
If $H$ is strictly balanced, then for every $\Lambda >0$ there are $c=c(\Lambda,H)>0$ and $C=C(H) \ge 1$ such that for all $n \ge v$, $\eps \in (0,\Lambda]$ and $p \in [0,1]$ satisfying $\mu^{(s-1)/s} \le \Lambda \log n$ we have 
\begin{equation}\label{eq:thm:sg:small}
\Pr(X \ge (1+\eps)\mu) \le C \exp\Bigl(-c \eps^2\mu \Bigr) .
\end{equation}
If~$H$ is $2$-balanced, then for all $\sigma,\Lambda > 0$ there are $c=c(\sigma,\Lambda,H)>0$ and $C=C(H) \ge 1$ such that for all $n \ge v$, $0 \le p \le \Lambda n^{-(v-2)/(e-1)-\sigma}$ and $0 < t \le \Lambda \min\{(\mu \log n)^{1/(2-1/s)},\mu\}$ we have 
\begin{equation}\label{eq:thm:sg:sg}
\Pr(X \ge \mu+t) \le C \exp\Bigl(-c t^2/\mu \Bigr) .
\end{equation}
\end{theorem}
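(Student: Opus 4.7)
The plan is to deduce both parts from Theorem~\ref{thm:pre:extended:small}, applied in the vertex-exposure setup of Example~\ref{ex:VE}. There one has $\cH=\binom{[n]}{v}$, $k=v$, $\ell=2$, $q=v$, $N=n$, $L=O(1)$ and $D=1$, so in particular $q-\ell+1=v-1$. A short count of degrees (using $2e\ge\delta v$ together with $(v-2)(\delta-2)\ge 0$) shows that any connected graph with $\delta\ge 1$ satisfies $e\ge v+\delta-2$, so $e-\delta+1\ge v-1$; since strictly balanced and 2-balanced graphs with $\delta\ge 1$ are connected, this gives $s=\min\{v-1,e-\delta+1\}=v-1=q-\ell+1$, and vertex exposure suffices.

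The first step is to verify the assumption~\eqref{eq:thm:pre:basic:small:asmpt}, i.e.\ $\max_{2\le j<v}\mu_j\le An^{-\alpha}$. By~\eqref{eq:muj:extended:VE} it suffices to bound each term $n^{v-j}p^{e-e_J}$ over induced $J\subseteq H$ with $v_J=j$. Using $p^e=\Theta(\mu/n^v)$ one rewrites
\[
n^{v-j}p^{e-e_J}\;=\;O\!\bigl(n^{(ve_J-ev_J)/e}\,\mu^{(e-e_J)/e}\bigr),
\]
where the implied constant depends only on $H$. For part~1, strict balancedness gives $ve_J-ev_J<0$ for every proper $J\subsetneq H$, and the hypothesis $\mu^{(s-1)/s}\le\Lambda\log n$ makes $\mu^{(e-e_J)/e}$ at most polylogarithmic in~$n$, yielding $\mu_j\le An^{-\alpha}$ with $A=A(H,\Lambda)$ and $\alpha=\alpha(H)>0$. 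For part~2 one instead uses $p\le\Lambda n^{-(v-2)/(e-1)-\sigma}$ directly, so that the exponent of $n$ in $n^{v-j}p^{e-e_J}$ is at most $v-j-(v-2)(e-e_J)/(e-1)-\sigma(e-e_J)$; the 2-balanced inequality $d_2(J)\le d_2(H)$ rearranges to exactly $v-j-(v-2)(e-e_J)/(e-1)\le 0$, and since $j<v$ forces $e-e_J\ge 1$, the $-\sigma$ term supplies strict negativity.

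Next I would apply~\eqref{eq:thm:pre:extended:small} with an auxiliary constant $K=K(H,\Lambda)$, obtaining
\[
\Pr(X\ge\mu+t)\;\le\;(1+2vn^{-v})\exp\!\Bigl(-\min\bigl\{c\varphi(t/\mu)\mu,\ \max\{ct^{1/(v-1)},K\}\log n\bigr\}\Bigr)
\]
with $c=c(H,\Lambda)>0$. By Remark~\ref{rem:varphi}, the first argument of the min is $\ge c_1\eps^2\mu$ in part~1 (since $\eps\le\Lambda$) and $\ge c_1t^2/\mu$ in part~2 (since $t\le\mu$). For the second argument, if the $K\log n$ branch of the max dominates then $\eps^2\mu$ (or $t^2/\mu$) is itself at most $(K/c_1)\log n$ and the bound is automatic; otherwise $ct^{1/(v-1)}\log n\ge c_1 t^2/\mu$ is equivalent to $\log n\ge(c_1/c)\,t^{(2s-1)/s}\mu^{-(s-1)/s}$, which in part~1 follows from $\eps\le\Lambda$ and $\mu^{(s-1)/s}\le\Lambda\log n$, and in part~2 follows directly from $t\le\Lambda(\mu\log n)^{1/(2-1/s)}$. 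Choosing $c_1$ small enough (depending on $H$, $\Lambda$, and also $\sigma$ in part~2) makes both arguments of the min at least $c_1\eps^2\mu$ (resp.\ $c_1t^2/\mu$), and the harmless prefactor $1+2vn^{-v}\le 2$ is absorbed into $C=C(H)$.

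The only real obstacle is the calibration of constants in the final step: Theorem~\ref{thm:pre:extended:small} provides a single constant $c$ that must simultaneously control both arguments of the min across the entire prescribed parameter range. Once the ranges $\mu^{(s-1)/s}\le\Lambda\log n$ and $t\le\Lambda(\mu\log n)^{1/(2-1/s)}$ are unpacked, this reduces to elementary algebraic manipulations, but the chain of dependencies on $\Lambda$, $\sigma$ and~$H$ needs to be tracked carefully.
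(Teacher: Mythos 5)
Your reduction of both parts to the vertex-exposure setup alone hinges on the claim that ``strictly balanced and 2-balanced graphs with $\delta\ge 1$ are connected,'' and this is where the argument breaks: perfect matchings $rK_2$ with $r\ge 2$ are $2$-balanced (as the paper itself remarks just before Section~\ref{sec:applications:RG:SG}, the only $2$-balanced graphs that are not strictly balanced are perfect matchings) but disconnected, and for them $e-\delta+1=r<2r-1=v-1$, so $s=e-\delta+1$ rather than $v-1$. Applying Theorem~\ref{thm:pre:extended:small} only in the vertex-exposure setup of Example~\ref{ex:VE} (where $q-\ell+1=v-1$) gives the term $t^{1/(v-1)}\log n$, which for $t>1$ is strictly weaker than the claimed $t^{1/s}\log n$, and the calibration step $ct^{1/(v-1)}\log n\ge c_1t^2/\mu$ then does not follow from the stated $t$-range $t\le\Lambda(\mu\log n)^{s/(2s-1)}$ when $\mu\log n>1$. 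The paper's proof avoids this by applying Theorem~\ref{thm:pre:extended:small} \emph{twice}, once in the edge-exposure setup of Example~\ref{ex:EE} (with $\ell=1$, $q=e-\delta+1$, $N=n^2$) and once in the vertex-exposure setup of Example~\ref{ex:VE}, and combining the two to realize the exponent $s=\min\{v-1,e-\delta+1\}$. Your streamlining of part~1 is correct: strictly balanced graphs are connected with $\delta\ge 1$, so there $s=v-1$ and vertex exposure genuinely suffices. (A minor secondary issue: the intermediate step $(v-2)(\delta-2)\ge 0$ is false whenever $\delta=1$ and $v\ge 3$, e.g.\ for trees; the desired inequality $e\ge v+\delta-2$ for connected graphs with $\delta\ge 1$ still holds, but the $\delta=1$ case should be deduced from $e\ge v-1$, not from $(v-2)(\delta-2)\ge 0$.) To complete part~2 you either need to add the edge-exposure application as in the paper, or argue separately that for perfect matchings in the regime $p\le\Lambda n^{-2-\sigma}$ one has $\mu\log n\to 0$, so the weaker $v-1$ exponent still yields the conclusion; neither is in the proposal.
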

\begin{remark}\label{rem:sg:small}
It is well-known that in~\eqref{eq:thm:sg:small}--\eqref{eq:thm:sg:sg} we have $\mu=\E X \sim \Var X$ when $p=o(1)$.  
The proof shows that the constants~$C$ can be replaced by $1+o(1)$, and that~\eqref{eq:thm:sg:small}--\eqref{eq:thm:sg:sg} both carry over to $G_{n,m}$. 
Furthermore, \cite{SWUT} demonstrates that the sub-Gaussian type tail inequality~\eqref{eq:thm:sg:small} can already fail for balanced graphs~$H$. 
\end{remark}
%

To put Theorem~\ref{thm:sg:small} into context, in the year 2000 Vu~\cite{Vu2000} showed that the sub-Gaussian inequality~\eqref{eq:thm:sg:small} holds for strictly balanced graphs as long as $\eps = O(1)$ and $\mu \le \log n$ (note that $\eps^2\mu \sim (\eps\mu)^2/\Var X$ by Remark~\ref{rem:sg:small}). 
Shortly afterwards, this result was reproved via a different method by Janson and Ruci{\'n}ski~\cite{DL}, who also raised the question whether the restriction $\mu=O(\log n)$ is necessary (see Section~6 in~\cite{DLP}). 
For the special case~$\eps=\Theta(1)$ the aforementioned results were yet again reproved by {\v{S}}ileikis~\cite{MS} in~2012.  
Our methods allow us (i)~to go beyond all these three approaches from 2000--2012, and (ii)~to answer the aforementioned question of Janson and Ruci{\'n}ski: inequality~\eqref{eq:thm:sg:small} still holds in the wider range $\mu = O((\log n)^{1+\xi})$. 

Wolfovitz demonstrated the applicability of his sub-Gaussian concentration result~\cite{WG2012} via the complete graph $K_r$ 
and the complete bipartite graph $K_{r,r}$, 
showing that inequality~\eqref{eq:thm:sg:sg} holds for both strictly $2$-balanced graphs in certain ranges of the parameters~$p,t$.   
Theorem~\ref{thm:sg:small} generalizes these main applications from~\cite{WG2012} to all $2$-balanced graphs (for a slightly wider parameter range). 
For $n^{-1} \le p \le n^{-1/2-\sigma}$ inequality~\eqref{eq:thm:sg:sg} also 
slightly extends the $t$--range of two $K_3$-specific results of Kannan~\cite{Kannan} and Wolfovitz~\cite{WG2011}. 
\begin{proof}[Proof of Theorem~\ref{thm:sg:small}]
The proofs of~\eqref{eq:thm:sg:small}--\eqref{eq:thm:sg:sg} are very similar: each time we shall apply Theorem~\ref{thm:pre:extended:small} twice, using the two different setups of Examples~\ref{ex:EE}--\ref{ex:VE}. 
Hence our main task is to check assumption~\eqref{eq:thm:pre:basic:small:asmpt}.

For~\eqref{eq:thm:sg:small} we assume that~$H$ is strictly balanced, in which case $\delta = \delta_H \ge 1$ is folklore. 
By assumption there is a constant $\beta=\beta(H) >0$ such that for all subgraphs $J \subsetneq H$ with $v_J \ge 1$ we have 
\begin{equation}\label{eq:density:JH:0}
v_J \cdot \frac{e}{v} \ge e_J + \beta \quad \text{and} \quad e_J \cdot \frac{v}{e} \le v_J - \beta .
\end{equation}
Using the setup of Example~\ref{ex:EE}, by~\eqref{eq:muj:extended:EE} there is a constant $B_1>0$ such that the corresponding $\mu_j$ satisfy 
\begin{equation}\label{eq:density:muj:EE:2}
\max_{1 \le j < e-\delta+1}\mu_j \le B_1 \sum_{J \subseteq H: 1 \le e_J < e-\delta+1}n^{v-v_J}p^{e-e_J} .
\end{equation}
Similarly, using the setup of Example~\ref{ex:VE}, by~\eqref{eq:muj:extended:VE} there is a constant $B_2>0$ such that 
\begin{equation}\label{eq:density:muj:VE:2}
\max_{2 \le j < v}\mu_j \le B_2 \sum_{J \subseteq H: 2 \le v_J < v}n^{v-v_J}p^{e-e_J} .
\end{equation}
Recalling $s=\min\{v-1,e-\delta+1\}$, in our further estimates of~\eqref{eq:density:muj:EE:2}--\eqref{eq:density:muj:VE:2} we may assume $s>1$ (otherwise $H=K_2$ and~\eqref{eq:density:muj:EE:2}--\eqref{eq:density:muj:VE:2} are both equal to zero). 
Recalling $\mu=\Theta(n^vp^e)$, we now pick $S=S(\Lambda,H) \ge 1$ large enough such that the assumption $\mu^{(s-1)/s} \le \Lambda \log n$ implies $p \le S n^{-v/e+\beta/(2e)}$ for all $n \ge v$. 
Using $\delta=\delta_H \ge 1$ and the density condition~\eqref{eq:density:JH:0}, it follows that there are constants $B_3,B_4,B_5>0$ such that 
\begin{equation}\label{eq:density:muj:2}
\begin{split}
\text{\eqref{eq:density:muj:EE:2} $ + $ \eqref{eq:density:muj:VE:2}}& \le B_3 \sum_{J \subseteq H: v_J \ge 2, e_J < e}n^{v-v_J}p^{e-e_J} \le B_4 \sum_{J \subseteq H: v_J \ge 2, e_J < e}n^{e_Jv/e-v_J+\beta/2} \le B_5 n^{-\beta/2} .
\end{split}
\end{equation}
Armed with~\eqref{eq:density:muj:2}, we now apply Theorem~\ref{thm:pre:extended:small} with $K=1$, $A=B_5$ and~$\alpha=\beta/4$, using the setup of Example~\ref{ex:EE} (with $\ell=1$, $k=e$, $q=e-\delta+1$ and $N=n^2$) and Example~\ref{ex:VE} (with $\ell=2$, $k=q=v$ and $N=n$). 
So, applying~\eqref{eq:thm:pre:extended:small} twice, there is a constant $c_1 > 0$ such that for $t=\eps \mu$ we have 
\begin{equation}\label{eq:thm:pre:extended:small:SG}
\begin{split}
\Pr(X \ge \mu+t)  \le \bigl(1+2\max\{v_H,e_H\}n^{-1}\bigr) \exp\left(-c_1\min\Bigl\{t^2/\mu, \; t, \; t^{1/s} \log n\Bigr\} \right) .
\end{split}
\end{equation}
Since $t =\eps \mu \le \Lambda \mu$, we infer $t \ge t^2/(\Lambda \mu)$. 
Hence, after adjusting the constant~$c_1$, the $t$-term is irrelevant for the exponent of~\eqref{eq:thm:pre:extended:small:SG}. 
As $t^{2-1/s} \le (\Lambda \mu)^{1 + (s-1)/s} = O(\mu \log n)$ by assumption, this establishes~\eqref{eq:thm:sg:small}.

For~\eqref{eq:thm:sg:sg} we proceed similarly, assuming that~$H$ is $2$-balanced. 
In this case, for all subgraphs $J \subsetneq H$ with $2 \le v_J < v$, the assumption that~$H$ is $2$-balanced (and noting that~\eqref{eq:density:JH:2} is trivial when $v_J=2$) implies 
\begin{equation}\label{eq:density:JH:2}
\frac{e-e_J}{v-v_J} = \frac{(e-1)-(e_J-1)}{(v-2)-(v_J-2)} \ge \frac{e-1}{v-2} . 
\end{equation}
Analogous to~\eqref{eq:density:muj:2}, in Examples~\ref{ex:EE} and~\ref{ex:VE} (with $1 \le j < e-\delta+1$ and $2 \le j < v$) 
the assumption $p \le \Lambda n^{-(v-2)/(e-1)-\sigma}$ and the density result~\eqref{eq:density:JH:2} entail existence of constants $B_6,B_7>0$ such that  
\begin{equation}\label{eq:density:muj:3}
\mu_j  
\le B_6 \sum_{J \subseteq H: v_J \ge 2, e_J < e}n^{(v-v_J)-(e-e_J)(v-2)/(e-1)-(e-e_J)\sigma} \le B_7 n^{-\sigma} .
\end{equation}
Armed with~\eqref{eq:density:muj:3}, we now obtain~\eqref{eq:thm:pre:extended:small:SG} by applying Theorem~\ref{thm:pre:extended:small} twice (with $A=B_7$ and~$\alpha=\sigma/2$) analogous to the proof of~\eqref{eq:thm:sg:small}. 
Noting $t \le \Lambda \mu$ and $t^{2-1/s}=O(\mu \log n)$ then readily completes the proof of~\eqref{eq:thm:sg:sg}. 
\end{proof}
\noindent 
Parts of Theorem~\ref{thm:sg:small} can be proved in a simpler/more direct way, 
but in view of the previous work~\cite{Vu2000,DL,MS,WG2012,Kannan,WG2011} here the main point 
is to illustrate that~\eqref{eq:thm:sg:small}--\eqref{eq:thm:sg:sg} follow \emph{routinely} from our general bounds.

\subsubsection{Large deviations: upper tail problem}\label{sec:applications:RG:SG:large}
Next we consider the classical upper tail problem for subgraph counts,
which concerns~$\Pr(X \ge (1+\eps)\mu)$ for constant~$\eps>0$.
Here our general methods usually give much weaker estimates than modern specialized approaches such as~\cite{UTSG,KkTailDK,K3TailCh}, 
but it turns out that our methods can \emph{routinely} sharpen results based on classical inductive approaches (which might potentially be useful in other contexts). 
Indeed, for balanced graphs Kim and Vu used two different inductions (see Sections~6.3 and~6.6 in~\cite{Vu2002}), which together 
establish the following tail estimate: 
if~$\eps \le C$ and~$\eps^2 \max\{\mu^{1/(v-1)},\mu^{1/e}\} = \omega(\log n)$,~then 
\begin{equation}\label{eq:thm:sg:KV}
\Pr(X \ge (1+\eps)\mu) \le \exp\Bigl(-c \eps^2 \max\bigl\{\mu^{1/(v-1)}, \mu^{1/e}\bigr\} \Bigr) .
\end{equation}
This inequality was reproved by Janson and Ruci{\'n}ski~\cite{DL} via their alternative inductive method. 
Using Theorem~\ref{thm:pre:extendedp}, we shall go beyond both approaches for strictly balanced graphs:  
 (i)~we improve the exponential rate of decay by an extra logarithmic factor, and (ii)~we remove the restriction to `large' expectations~$\mu$. 
\begin{theorem}\label{thm:sg}
Let~$H$ be a strictly balanced graph with $v=v_H$~vertices and $e=e_H$~edges. 
Let $X=X_H$ and $\mu=\E X$.   
For any $\eps > 0$ there is $c=c(\eps,H)>0$ such that for all $n \ge v$ and $p \in [0,1]$ we have 
\begin{equation}\label{eq:thm:sg}
\Pr(X \ge (1+\eps)\mu) \le \exp\biggl(-c \min\Bigl\{\mu, \; \max\bigl\{\mu^{1/(v-1)}, \mu^{1/e}\bigr\} \log n \Bigr\} \biggr) .
\end{equation}
\end{theorem}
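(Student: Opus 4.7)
The plan is to apply Theorem~\ref{thm:pre:extendedp} twice, once via the vertex-exposure encoding of Example~\ref{ex:VE} (with $\ell=2$, $k=q=v$, $N=n$, $D=1$) and once via the edge-exposure encoding of Example~\ref{ex:EE} (with $\ell=1$, $k=q=e$, $N=n^2$, $D=1$; note that $\Delta_e(\cH)=1$ is automatic since the $e$ edges of a copy of $H$ already determine the copy). In both applications $q-\ell+1$ equals $v-1$ and $e$, respectively, so the two inner exponents appearing in~\eqref{eq:thm:pre:extendedp} will become $\mu^{1/(v-1)}$ and $\mu^{1/e}$. I would take $\pi=N^{-1}$ in both applications, which makes assumption~\eqref{eq:thm:pre:extendedp:asmpt2} automatic via Remark~\ref{rem:pre:extendedp} with $\tau=1/2$, and ensures $\log(e/\pi)\ge\log N\ge\log n$ in the exponent of the final bound.

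The technical core is verifying the balancedness assumption~\eqref{eq:thm:pre:extendedp:asmpt1}, namely that there exist constants $A=A(H),\alpha=\alpha(H)>0$ such that
\[
\mu_j \le AN^{-\alpha}\max\bigl\{\mu^{(q-j)/(q-\ell+1)},\,1\bigr\}
\]
for all relevant $j$. The starting point would be the explicit estimates~\eqref{eq:muj:extended:EE} and~\eqref{eq:muj:extended:VE} for $\mu_j$, combined with a case split on whether $\mu<1$ or $\mu\ge 1$. When $\mu<1$ the maximum equals $1$, and the constraint $\mu=\Theta(n^vp^e)\le 1$ forces $p=O(n^{-v/e})$; inserting this bound collapses the exponent of $n$ in the estimate for $\mu_j$ to a positive multiple of $ev_J-ve_J$, which is positive by strict balancedness. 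When $\mu\ge 1$ the same algebra bounds $\mu_j/\mu^{(q-j)/(q-\ell+1)}$ by $n^{-(ev_J-ve_J)/e}$ times a $p$-power whose sign depends on $J$; any wrong-signed $p$-power is absorbed using the lower bound $p\ge \Theta(n^{-v/e})$ implicit in $\mu\ge 1$, again leaving a polynomial saving in $n$.

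With~\eqref{eq:thm:pre:extendedp:asmpt1}--\eqref{eq:thm:pre:extendedp:asmpt2} established, Theorem~\ref{thm:pre:extendedp} delivers, for each fixed $\eps>0$, two bounds of the form $\Pr(X\ge(1+\eps)\mu)\le(1+o(1))\exp\bigl(-c_i\min\{\mu,\mu^{1/s_i}\log n\}\bigr)$ with $s_1=v-1$ and $s_2=e$. Taking the smaller of the two and applying the elementary identity
\[
\max\bigl\{\min\{\mu,a\log n\},\min\{\mu,b\log n\}\bigr\}=\min\bigl\{\mu,\max\{a,b\}\log n\bigr\}
\]
with $a=\mu^{1/(v-1)}$ and $b=\mu^{1/e}$ then yields the exponent of~\eqref{eq:thm:sg}. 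The $(1+o(1))$-prefactor is absorbed exactly as at the end of the proof of Theorem~\ref{thm:randinduced}: for large enough exponent $\Pi$ one uses $1+o(1)\le e^{\Pi/2}$, and for bounded $\Pi$ Markov's inequality gives the bound directly.

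I expect the hard step to be the $\mu\ge 1$ case of~\eqref{eq:thm:pre:extendedp:asmpt1} in the vertex-exposure setup, because strict balancedness asserts $e_Jv<ev_J$ but \emph{not} the cleaner inequality $e_J(v-1)\le e(v_J-1)$ that would directly make the $p$-exponent in $\mu_j/\mu^{(v-j)/(v-1)}$ non-negative. The trick of trading an apparently divergent $p$-power against $p\ge\Theta(n^{-v/e})$ is what saves the argument. The edge-exposure case is much cleaner, since the exponent of $n$ in $\mu_j/\mu^{(e-j)/e}$ is directly $(jv-ev_J)/e<0$ for any $J\subsetneq H$ with $e_J=j$, with no $p$-sign ambiguity.
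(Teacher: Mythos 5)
Your proposal is correct and essentially matches the paper's proof in Appendix~A: same two encodings (Examples~\ref{ex:EE} and~\ref{ex:VE}), same choice $\pi=N^{-1}$ via Remark~\ref{rem:pre:extendedp}, same reliance on strict balancedness to verify~\eqref{eq:thm:pre:extendedp:asmpt1}, and the same prefactor-removal step. The only cosmetic difference is in the vertex-exposure case with $\mu\ge 1$: where you substitute $p\ge\Theta(n^{-v/e})$ to kill a wrong-signed $p$-power, the paper uses the bound $(v-j)/(v-1)\ge(v-j)/v+1/v^2$ to extract a spare factor $(n^vp^e)^{-1/v^2}$ and then splits on $p\lessgtr n^{-v/(2e)}$; both packagings exhaust the same strict-balancedness slack $ev_J-ve_J\ge\beta$.
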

\begin{remark}\label{rem:sg}
Writing the exponent of~\eqref{eq:thm:sg} in the form $\exp(-c\Psi)$, the proof shows that $c=c'\min\{\eps^2,1\}$ with $c'=c'(H)>0$ suffices when~$\min\{\eps^2,1\} \Psi \ge 1$. Furthermore, inequality~\eqref{eq:thm:sg} also carries over to $G_{n,m}$.  
\end{remark}
\begin{remark}\label{rem:sg:bal}
For balanced graphs~$H$, the proof yields the following variant: 
for all $n \ge v$, $p \ge \xi n^{-v/e+\sigma}$ and $\eps>0$ we have $\Pr(X \ge (1+\eps)\mu) \le \exp(-c\mu^{1/(v-1)} \log n)$, where $c=c(\sigma,\xi,\eps,H)>0$. 
\end{remark}
\noindent 
For $r$-armed stars~$H=K_{1,r}$ inequality~\eqref{eq:thm:sg} yields an~$\exp\bigl(-\Omega(\min\{\mu,\mu^{1/r}\log n\})\bigr)$ exponential decay, 
which by~\cite{star} is best possible for~$p \le n^{-1/r}$ and~$\eps=\Theta(1)$.  
However, for general graphs~$H$ other approaches such as~\cite{UTSG,KkTailDK,K3TailCh}
 yield better estimates (as mentioned before), 
so we defer the proof of Theorem~\ref{thm:sg} to~Appendix~\ref{apx:proof}. 

\begin{ack}
We are grateful to the referees for helpful suggestions concerning the presentation. 
\end{ack}

\small
\bibliographystyle{plain}

\begin{thebibliography}{10}


\bibitem{BHKLS}
A.~Baltz, P.~Hegarty, J.~Knape, U.~Larsson, and T.~Schoen.
\newblock The structure of maximum subsets of {$\{1,\dots,n\}$} with no solutions to {$a+b=kc$}.
\newblock {\em Electron.\ J.\ Combin.} {\bf 12} (2005), Paper 19. 

\bibitem{BK2012}
M.~Bateman, and N.H.~Katz.
\newblock New bounds on cap sets. 
\newblock {\em J.~Amer.~Math.~Soc.} {\bf 25} (2012), 585--613. 

\bibitem{BK}
J.~van~den~Berg and H.~Kesten.
\newblock Inequalities with applications to percolation and reliability.
\newblock {\em J.\ Appl.\ Probab.} {\bf 22} (1985), 556--569.

\bibitem{BKkoutn}
J.~van~den~Berg and J.~Jonasson.
\newblock A {BK} inequality for randomly drawn subsets of fixed size.
\newblock {\em Probab.\ Theory Related Fields} {\bf 154} (2012), 835--844.

\bibitem{B2014}
T.~Bloom.
\newblock {A quantitative improvement for Roth's theorem on arithmetic progressions}.
\newblock {\em J.\ Lond.\ Math.\ Soc.} {\bf 93} (2016), 643--663. 

\bibitem{K3TailCh}
S.~Chatterjee.
\newblock The missing log in large deviations for triangle counts.
\newblock {\em Random Struct.\ Alg.} {\bf 40} (2012), 437--451.

\bibitem{KkTailDK}
B.~DeMarco and J.~Kahn.
\newblock Tight upper tail bounds for cliques.
\newblock {\em Random Struct.\ Alg.} {\bf 41} (2012), 469--487.

\bibitem{disjoint}
P.~Erd{\H{o}}s and P.~Tetali.
\newblock Representations of integers as the sum of {$k$} terms.
\newblock {\em Random Struct.\ Alg.} {\bf 1} (1990), 245--261.

\bibitem{GRR}
R.~Graham, V.~R{\"o}dl, and A.~Ruci{\'n}ski.
\newblock On {S}chur properties of random subsets of integers.
\newblock {\em J.\ Number Theory} {\bf 61} (1996), 388--408.

\bibitem{Gr}
B.~Green.
\newblock The Cameron--Erd{\H o}s conjecture.
\newblock {\em Bull.\ London Math. Soc.} {\bf 36} (2004), 769--778.

\bibitem{Janson}
S.~Janson.
\newblock Poisson approximation for large deviations.
\newblock {\em Random Struct.\ Alg.} {\bf 1} (1990), 221--229.  

\bibitem{UTSG}
S.~Janson, K.~Oleszkiewicz, and A.~Ruci{\'n}ski.
\newblock Upper tails for subgraph counts in random graphs.
\newblock {\em Israel J.\ Math.} {\bf 142} (2004), 61--92.

\bibitem{DLP}
S.~Janson and A.~Ruci{\'n}ski.
\newblock The deletion method for upper tail estimates.
\newblock Preprint (2000).
\texttt{\detokenize{http://www2.math.uu.se/~svante/papers/sj135_ppt.pdf}}

\bibitem{UT}
S.~Janson and A.~Ruci{\'n}ski.
\newblock The infamous upper tail.
\newblock {\em Random Struct.\ Alg.} {\bf 20} (2002), 317--342.

\bibitem{DL}
S.~Janson and A.~Ruci{\'n}ski.
\newblock The deletion method for upper tail estimates.
\newblock {\em Combinatorica} {\bf 24} (2004), 615--640.

\bibitem{UTAP}
S.~Janson and A.~Ruci{\'n}ski.
\newblock Upper tails for counting objects in randomly induced subhypergraphs and rooted random graphs.
\newblock {\em Ark.\ Mat.} {\bf 49} (2011), 79--96.

\bibitem{JSuen}
S.~Janson.
\newblock New versions of Suen's correlation inequality. 
\newblock {\em Random Struct.\ Alg.} {\bf 13} (1998), 467--483.

\bibitem{JW}
S.~Janson and L.~Warnke.
\newblock {The lower tail: Poisson approximation revisited}.
\newblock {\em Random Struct.\ Alg.} {\bf 48} (2016), 219--246.

\bibitem{Kannan}
R.~{Kannan}.
\newblock {Two new Probability inequalities and Concentration Results}.
\newblock Preprint (2010). \texttt{arXiv:0809.2477v4}.

\bibitem{KimVu2000}
J.H.~Kim and V.H.~Vu.
\newblock Concentration of multivariate polynomials and its applications.
\newblock {\em Combinatorica} {\bf 20} (2000), 417--434. 

\bibitem{BKR}
D.~Reimer.
\newblock Proof of the van den {B}erg-{K}esten conjecture.
\newblock {\em Combin.\ Probab.\ Comput.} {\bf 9} (2000), 27--32.

\bibitem{RW}
O.~Riordan and L.~Warnke.
\newblock The {J}anson inequalities for general up-sets.
\newblock {\em Random Struct.\ Alg.} {\bf 46} (2015), 391--395. 

\bibitem{RR1994}
V.~R{\"o}dl and A.~Ruci{\'n}ski.
\newblock Random graphs with monochromatic triangles in every edge coloring.
\newblock {\em Random Struct.\ Alg.} {\bf 5} (1994), 253--270.

\bibitem{SGN}
A.~Ruci{\'n}ski.
\newblock When are small subgraphs of a random graph normally distributed?
\newblock {\em Probab.\ Theory Related Fields} {\bf 78} (1988), 1--10.

\bibitem{Schacht2009}
M.~Schacht.
\newblock Extremal results for random discrete structures.
\newblock {\em Ann.\ of Math.} {\bf 184} (2016), 333--365. 

\bibitem{MS}
M.~{\v{S}}ileikis.
On the upper tail of counts of strictly balanced subgraphs.
\newblock {\em Electron.\ J.\ Combin.} {\bf 19} (2012), Paper 4. 


\bibitem{SWUT}
M.~{\v{S}}ileikis and L.~Warnke. 
\newblock A counterexample to the DeMarco-Kahn Upper Tail Conjecture. 
\newblock {\em Random Struct.\ Alg.}, to appear. \texttt{arXiv:1809.09595}.

\bibitem{star}
M.~{\v{S}}ileikis and L.~Warnke. 
\newblock Upper tail bounds for Stars.
\newblock Preprint (2019). \texttt{arXiv:1901.10637}.

\bibitem{Spencer1990}
J.~Spencer.
\newblock Counting extensions.
\newblock {\em J.\ Combin.\ Theory Ser.~A} {\bf 55} (1990), 247--255.

\bibitem{SSW}
R.~Sp{\"o}hel, A.~Steger and L.~Warnke.
\newblock General deletion lemmas via the Harris inequality. 
\newblock {\em J.\ Combin.} {\bf 4} (2013), 251--271.

\bibitem{Vu2000}
V.H.~Vu.
\newblock On the concentration of multivariate polynomials with small expectation.
\newblock {\em Random Struct.\ Alg.} {\bf 16} (2000), 344--363.

\bibitem{Vu2001}
V.H.~Vu.
\newblock A large deviation result on the number of small subgraphs of a random graph.
\newblock {\em Combin.\ Probab.\ Comput.} {\bf 10} (2001), 79--94.

\bibitem{Vu2002}
V.H.~Vu.
\newblock Concentration of non-{L}ipschitz functions and applications.
\newblock {\em Random Struct.\ Alg.} {\bf 20} (2002), 262--316. 

\bibitem{K4free}
L.~Warnke.
\newblock When does the {$K_4$}-free process stop?
\newblock {\em Random Struct.\ Alg.} {\bf 44} (2014), 355--397.

\bibitem{TBD}
L.~Warnke.
\newblock On the method of typical bounded differences. 
\newblock {\em Combin.\ Probab.\ Comput.} {\bf 25} (2016), 269--299.

\bibitem{AP}
L.~Warnke.
\newblock {Upper tails for arithmetic progressions in random subsets}.
\newblock {\em Israel J.\ Math.} {\bf 221} (2017), 317--365.

\bibitem{WG2011}
G.~Wolfovitz.
\newblock Sub-{G}aussian tails for the number of triangles in {$G(n,p)$}.
\newblock {\em Combin.\ Probab.\ Comput.}, 20(1):155--160, 2011.

\bibitem{WG2012}
G.~Wolfovitz.
\newblock A concentration result with application to subgraph count.
\newblock {\em Random Struct.\ Alg.} {\bf 40} (2012), 254--267. 

\end{thebibliography}

\normalsize

\begin{appendix}

\section{Proofs omitted from Section~\ref{sec:applications:RG:SG:large}}\label{apx:proof} 
In this appendix we give the proof of Theorem~\ref{thm:sg}, which proceeds similar to Theorem~\ref{thm:randinduced} and~\ref{thm:sg:small}. 
Namely, we prove~\eqref{eq:thm:sg} by two applications of Theorem~\ref{thm:pre:extendedp} and Remark~\ref{rem:pre:extendedp} (using the setups of Examples~\ref{ex:EE}--\ref{ex:VE}).
\begin{proof}[Proof of Theorem~\ref{thm:sg}]
We first use the setup of Example~\ref{ex:EE} 
with $\ell=1$, $q=k=e$ and $N=n^2$. 
Using the bound~\eqref{eq:muj:extended:EE} for~$\mu_j$, the expectation~$\mu=\Theta(n^vp^e)$ and the density result~\eqref{eq:density:JH:0}, for $1 \le j < e=e_H$ we infer 
\begin{equation}\label{eq:density:muj:0}
\frac{\mu_j}{\mu^{(q-j)/(q-\ell+1)}} \le \frac{B \sum_{J \subseteq H: e_J=j}n^{v-v_J}p^{e-j}}{(\mu^{1/e})^{e-j}} \le B_1 \sum_{J \subseteq H: e_J=j}n^{e_Jv/e-v_J} \le B_2 n^{-\beta} . 
\end{equation}
Applying Theorem~\ref{thm:pre:extendedp} and Remark~\ref{rem:pre:extendedp} with $A=B_2$ and $\alpha=\beta/2$, there thus is $c_1>0$ such that  
\begin{equation}\label{eq:thm:sg:lower:EE}
\Pr(X \ge (1+\eps)\mu) \le (1+3e_H n^{-2}) \exp\left(-c_1 \min\bigl\{\eps^2,1\bigr\} \min\{\mu, \; \mu^{1/e}\log n\}\right) .
\end{equation}

Next we use the setup of Example~\ref{ex:VE} with $\ell=2$, $k=q=v$ and $N=n$. 
We distinguish several cases. 
If $p \le n^{-v/e}$, then using the bound~\eqref{eq:muj:extended:EE} for $\mu_j$ and the density result~\eqref{eq:density:JH:0}, we infer for $2 \le j < v=v_H$ that 
\begin{equation}\label{eq:density:muj}
\mu_j \le  B \sum_{J \subseteq H: v_J=j}n^{v-v_J}p^{e-e_J} \le B \sum_{J \subseteq H: 2 \le v_J < v_H}n^{e_Jv/e-v_J} \le B_3 n^{-\beta} . 
\end{equation}
Otherwise $p \ge n^{-v/e}$, so $n^vp^e \ge 1$. 
Note that for $j < v$ we have $(v-j)/(v-1) \ge (v-j)/v + 1/v^2$. 
Recalling~$\ell=2$ and~$q=v$, using~\eqref{eq:muj:extended:VE}, $\mu=\Theta(n^vp^e)$ and \eqref{eq:density:JH:0} we infer for $2 \le j < v=v_H$ that 
\begin{equation}\label{eq:density:VE1}
\frac{\mu_j}{\mu^{(q-j)/(q-\ell+1)}} 
 \le \frac{\mu_j}{B_4(n^vp^e)^{(v-j)/v + 1/v^2}} \le \frac{B_5 \sum_{J \subseteq H: v_J=j}p^{v_J e/v-e_J}}{(n^vp^e)^{1/v^2}} 
\le \frac{B_6 p^{\beta}}{(n^vp^e)^{1/v^2}} .
\end{equation}
Distinguishing $n^{-v/e} \le p \le n^{-v/(2e)}$ and $n^{-v/(2e)} \le p \le 1$, we see that  
\begin{equation}\label{eq:density:VE2}
\frac{\mu_j}{\mu^{(q-j)/(q-\ell+1)}} \le B_6 \max\{n^{-\beta v/(2e)},n^{-1/(2v)}\} . 
\end{equation}
Applying Theorem~\ref{thm:pre:extendedp} and Remark~\ref{rem:pre:extendedp} with $A=\max\{B_3,B_6\}$ and $\alpha=\min\{\beta,\beta v/(2e),1/(2v)\}$, we deduce 
\begin{equation}\label{eq:thm:sg:lower:VE}
\Pr(X \ge (1+\eps)\mu) \le (1+3v_H n^{-1}) \exp\left(-c_2 \min\bigl\{\eps^2,1\bigr\} \min\{\mu, \; \mu^{1/(v-1)}\log n\}\right) .
\end{equation}

Finally, we combine the two upper bounds~\eqref{eq:thm:sg:lower:EE} and~\eqref{eq:thm:sg:lower:VE}, and then remove (for cosmetic reasons) the multiplicative prefactor $1+O(n^{-1})$ analogous to the proof of Theorem~\ref{thm:randinduced}, which establishes~\eqref{eq:thm:sg}.  
\end{proof}
\noindent 
For Remark~\ref{rem:sg:bal} the point is that for balanced graphs~$H$ the density condition~\eqref{eq:density:JH:0} only holds with~$\beta=0$, so in~\eqref{eq:density:VE1} we need $p \ge \xi n^{-v/e+\sigma}$ to establish~\eqref{eq:density:VE2} with $\le O(n^{-e\sigma/v^2})$, say.

\end{appendix}

\end{document}